\definecolor{roxo}{HTML}{8613B0}
\newtheorem{theorem}{Theorem}[section]
\newtheorem{conjecture}[theorem]{Conjecture}
\newtheorem{lemma}[theorem]{Lemma}
\newtheorem{corollary}[theorem]{Corollary}
\newtheorem{claim}[theorem]{Claim}
\newtheorem{definition}[theorem]{Definition}
\newcommand{\aset}[1]{\left\{#1\right\}}
\DeclareMathOperator{\prehang}{preHang}
\DeclareMathOperator{\hang}{hang}
\def\back{\mathop{\text{\rm back}}\nolimits}
\def\forw{\mathop{\text{\rm forw}}\nolimits}
\newcommand{\D}{\mathcal{D}}
\newcommand{\B}{\mathcal{B}}
\newcommand{\full}{full}
\def\vertices{\mathop{\text{\rm vert}}\nolimits}
\newcommand{\margin}[1]{}
\newcommand{\marginl}[1]{}
\title[Decomposing highly edge-connected graphs into paths of any given length]
{Decomposing highly edge-connected graphs into paths of any given length}
\author[all]{F. Botler, ~ G. O. Mota, ~  M. T. I. Oshiro, ~ Y. Wakabayashi}
\dedicatory{\small {\rm Instituto de Matem\'atica e Estat\'{\i}stica \\ Universidade de S\~ao 
Paulo, Brazil }}
\address{Instituto de Matem\'atica e Estat\'{\i}stica \\Universidade de 
   S\~ao Paulo, Rua do Mat\~ao 1010\\05508--090~S\~ao Paulo, Brazil
   {\rm(F. Botler, G. O. Mota, M. T. I. Oshiro, Y. Wakabayashi)}}
\email{\{fbotler|mota|oshiro|yw\}@ime.usp.br}
\thanks{This research has been partially supported by CNPq
  Projects (Proc. 477203/2012-4 and {456792/2014-7}), Fapesp Project
  (Proc. 2013/03447-6) and MaCLinC Project of Numec/USP, Brazil.
  F. Botler is supported by Fapesp (Proc. 2014/01460-8 and
  2011/08033-0), G. O.  Mota is supported by Fapesp
  (Proc. 2013/11431-2 and 2013/20733-2), M. T. I. Oshiro is supported
  by Capes, and Y. Wakabayashi is partially supported by CNPq Grant
  (Proc. 303987/2010-3).\\
 Email:{\texttt{{\{fbotler|mota|oshiro|yw\}@ime.usp.br}}}}
\date{\today, \currenttime}
\begin{document}

\begin{abstract}
  In 2006, Barát and Thomassen posed the following conjecture: for
  each tree~\(T\), there exists a natural number \(k_T\) such that, 
  if \(G\) is a \(k_T\)-edge-connected graph and \(|E(G)|\) is
  divisible by \(|E(T)|\), then \(G\) admits a decomposition into copies of~\(T\).  
  This conjecture was verified for stars, some bistars, paths of 
   length~$3$, $5$, and $2^r$ for every positive integer~$r$. 
   We prove that this conjecture holds for 
   paths of any fixed length.
\end{abstract}

\maketitle
%\doublespacing
\onehalfspacing

\section{Introduction}\label{section:intro}

A decomposition \(\D = \{H_1,\ldots,H_k\}\) of a graph \(G\) is a set 
of pairwise edge-disjoint subgraphs of \(G\) that cover the edge set of \(G\).
If \(H_i\) is isomorphic to a fixed graph \(H\) for \(1\leq i\leq k\), then 
we say that \(\D\) is an
\emph{\(H\)-decomposition} of \(G\).
%A natural question is whether a graph \(G\) admits an \(H\)-decomposition. 
%When $H$ has only two edges, it is easy to see that any connected
%graph with an even number of edges admits an
%\hbox{\(H\)-decomposition.} But, 
It is known that, when \(H\) is connected and contains at least
\(3\)~edges, the  problem of deciding whether a graph admits an \(H\)-decomposition is
NP-complete~\cite{DorTarsi97}. 
%In view of this, it is natural to
%investigate the decomposition problem restricted to special graphs~$H$.
%However, even when \(H\) is a path of fixed length, and
%the input graph is regular, only some partial results are
%known~\cite{BoFo83,FaGeKo10,HeLiYu99,Kotzig57}.  
When $H$ is a tree,
Barát and Thomassen~\cite{BaTh06} proposed the following conjecture,
that is the subject of our interest in this paper.

\begin{conjecture}\label{conj:dec}
  For each tree $T$, there exists a natural number $k_T$ such that, if $G$
  is a \hbox{$k_T$-edge-connected graph} and $|E(G)|$ is divisible by
  $|E(T)|$, then \(G\) admits a \hbox{\(T\)-decomposition.}
\end{conjecture}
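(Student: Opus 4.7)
The plan is to attack the conjecture in three stages: a reduction to a clean asymptotic regime, an approximate $T$-decomposition produced probabilistically, and a final correction step using an absorbing substructure held in reserve.

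First, I would reduce to the case where $|E(G)|$ is large compared to a constant depending only on $T$, since for bounded $|E(G)|$ the statement follows vacuously by choosing $k_T$ large enough to force $G$ into a finite list of cases handled directly. Then, since a $k_T$-edge-connected graph contains $\lfloor k_T/2 \rfloor$ edge-disjoint spanning trees (Nash--Williams/Tutte), I would earmark a small family of these spanning trees as a reservoir $R$, leaving a residual graph $G' = G - R$ that is still highly edge-connected and whose edge count matches $|E(T)|$ up to a controlled residue. This reservoir is what will give us flexibility at the end.

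Second, I would build an approximate $T$-decomposition of $G'$ via a random colouring/orientation. The idea is to label each edge of $G'$ with a role (an oriented edge of $T$) so that at every vertex $v$ of $G'$ the multiset of incident labels is close, in concentration, to the multiset one would get if $v$ played each role of every vertex of $T$ the appropriate number of times. High edge-connectivity, together with a Chernoff-type argument, makes such a labelling achievable with small error. A greedy or iterative extraction then peels off copies of $T$, following the roles, and covers all but a $o(1)$ fraction of $E(G')$.

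Third, the uncovered edges must be absorbed using $R$. This is where the leap from paths to arbitrary trees becomes the real obstacle, and where I expect most of the work to lie. For a path, absorption reduces to balancing ``token'' counts at each vertex and can be handled by a flow/orientation argument, which is the mechanism the authors exploit for their path result. For a branching tree $T$, absorbing a leftover edge requires simultaneously reconfiguring the entire branching pattern at several vertices of $G$, and the necessary local moves need not be available in an arbitrary highly edge-connected graph. I would therefore try to design from $R$ an absorber built out of bounded-size gadgets, each one carefully tailored so that any small residual subgraph with the correct divisibility can be swallowed into a $T$-decomposition; the main technical challenge is proving that such gadgets are present in every sufficiently edge-connected host graph, uniformly over $T$. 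Overcoming this robustness requirement seems to be the crux of the conjecture.
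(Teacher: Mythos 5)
The statement you were asked to prove is a \emph{conjecture}, and the paper does not prove it: the paper's actual result is the special case where $T$ is a path. Your proposal targets the full conjecture, but it is an outline rather than a proof, and its decisive step is left open by your own admission --- you identify the absorption of leftover edges when $T$ branches as ``the crux of the conjecture'' and stop there. That is a genuine gap, not a technicality: the existence of suitable absorbing gadgets in \emph{every} sufficiently edge-connected host graph, uniformly over $T$, is precisely the hard content of the problem. A second, less explicit gap is in the random role-labelling step: highly edge-connected graphs need not have vertex degrees that are large relative to $|V(T)|$ in a way that makes a Chernoff-style concentration automatic, and the local condition that each vertex can play each role of $T$ roughly the right number of times has to be engineered, not merely observed. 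Achieving an analogous balance condition is already the bulk of the work in this paper's path case (the bifactorizations of Section~\ref{section:factorizations}).

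For comparison, the paper's proof of the path case is deterministic and structural, with no probabilistic labelling and no absorber. Starting from a highly edge-connected bipartite $G$, it splits $G$ into two spanning subgraphs with degree-divisibility constraints (Lemma~\ref{lem:tight-good-initial-decomposition}), refines each into a ``fractional factorization'' via Nash--Williams splittings, Mader liftings, and $1$- and $2$-factor theorems, combines them to get an initial decomposition into short trails (Theorems~\ref{thm:odd-path-decomposition} and~\ref{thm:even-path-decomposition}), and then repairs those trails into paths via the Disentangling Lemma (Lemma~\ref{lemma:disentangling}) --- a local edge-switching argument that is tailored to the linear structure of a path and does not obviously extend to branching trees. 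Your plan is genuinely different in flavour, closer to an absorbing-method framework, but as written it does not constitute a proof of the conjecture; it is a research direction whose hardest step you name but do not execute.
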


%Barát and Thomassen~\cite{BaTh06} proved that, when $T$ is the
%  claw \(K_{1,3}\), this conjecture is equivalent to
%Tutte's weak \(3\)-flow conjecture~\cite{Jaeger88}. They also pointed
%out that this conjecture is false if, instead of a tree, we consider a
%graph that contains a cycle.

The following version of Conjecture~\ref{conj:dec} for
bipartite graphs was shown by Barát and Gerbner~\cite{BaGe14}, and independently by
Thomassen~\cite{Th13a}, to be equivalent to Conjecture~\ref{conj:dec}.

\begin{conjecture}\label{conj:dec-bip}
  For each tree $T$, there exists a natural number $k'_T$ such that,
  if $G$ is a $k'_T$-edge-connected bipartite graph and $|E(G)|$ is
  divisible by $|E(T)|$, then \(G\) admits a \(T\)-decomposition.
\end{conjecture}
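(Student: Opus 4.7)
The plan is to attempt Conjecture 1.2 by induction on $|E(T)|$, exploiting that the bipartite formulation aligns naturally with the tree's own bipartition. The base case $|E(T)|=1$ is immediate. For the inductive step, fix a leaf $\ell$ of $T$ with unique neighbour $p$, and set $T' = T - \ell$. Given a bipartite $k'_T$-edge-connected graph $G$ with $|E(T)|$ dividing $|E(G)|$, the aim is to build a $T$-decomposition of $G$ out of a $T'$-decomposition of a suitable subgraph together with a carefully chosen set of ``pendant'' edges.

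Concretely, I would first split $E(G)$ into two parts: a subgraph $H$ to be decomposed into copies of $T'$, and a set $M$ of exactly $|E(G)|/|E(T)|$ edges, one for each eventual copy of $T$. To obtain $M$ while preserving enough edge-connectivity of $H = G - M$ (so that the inductive hypothesis applies), one invokes a Nash-Williams/Tutte type spanning-subgraph result: with $k'_T$ taken sufficiently large in terms of $k'_{T'}$, we can peel off $M$ while keeping $H$ at least $k'_{T'}$-edge-connected, bipartite, and with $|E(H)|$ divisible by $|E(T')|$. Induction then yields a $T'$-decomposition $\D'$ of $H$.

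The core of the proof is to pair each copy $T'_i \in \D'$ with a distinct edge of $M$ in such a way that the chosen edge is incident to the vertex of $T'_i$ playing the role of $p$; attaching it then produces a copy of $T$. I would phrase this as a bipartite matching problem between $\D'$ and $M$ and verify Hall's condition by exploiting that $M$, being carved out of a highly edge-connected graph, cannot accumulate around too few vertices, while the $p$-vertices of copies in $\D'$ cover every vertex of $G$ with controlled multiplicity.

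The main obstacle lies in obtaining that control: a raw application of the inductive hypothesis gives essentially no handle on which vertex of each copy is mapped to $p$. To make the matching step work, I would strengthen the inductive statement to produce a $T'$-decomposition in which the multiset of $p$-images respects prescribed local quotas at every vertex. Propagating such a strengthened statement through the leaf-removal induction for \emph{arbitrary} trees, with arbitrary branching, is precisely where the linear, easily rerouted structure exploited in the path case of this paper breaks down; bridging this gap for general trees is the principal difficulty and the reason the full conjecture remains open.
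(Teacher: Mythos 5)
The statement you are asked to prove is a conjecture, and neither the paper nor your proposal establishes it in full generality; the paper proves it only for $T = P_\ell$, and you explicitly concede at the end that the general case remains open. So the useful comparison is between your sketch and the paper's proof for paths. There, your leaf-removal induction is structurally close to what the paper actually does: in Theorems~\ref{thm:odd-path-decomposition} and~\ref{thm:even-path-decomposition}, a $(1,2k+1)$- (resp.\ $(2,2(2k+2))$-) bifactorization is peeled down by removing the factor $M_i$ and half of one Eulerian factor $F_{i,k}^{\forw}$, the induction hypothesis gives a $(2k-1)$-path tracking decomposition of what remains, and each of those paths is then extended by one edge at each end. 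The ``prescribed local quota'' you identify as necessary is precisely the paper's $\mathbb{F}$-balanced condition $\B(v) = d_{G_i}(v)/(2k+1) + d_{M_{3-i}}(v)$, which guarantees the exact arithmetic identity $\B'(v) = |S_v|$ at every vertex, so the extension step is an exact pairing rather than a Hall argument. In that sense you have correctly located the reason a naive induction fails and the shape of the strengthening needed, and you are right that propagating such a condition through arbitrary branching is where the path structure is essential.

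However, your sketch has a second, independent gap that persists \emph{even in the path case} and that the paper devotes most of its technical machinery to. After attaching a pendant edge $vu$ to the end-vertex $v$ of a $(2k-1)$-path $P$, there is no guarantee that $u \notin V(P)$; the result may be a closed trail or a trail with a repeated vertex, not a path. A matching argument that picks an edge incident to the correct vertex of $T'_i$ does nothing to avoid this. The paper handles it by working not with path decompositions but with \emph{tracking decompositions into vanilla trails}, together with a $k$-completeness condition on hanging edges, and then invoking the Disentangling Lemma (Lemma~\ref{lemma:disentangling}), which repeatedly reroutes edges along \full-augmenting sequences until all trails become paths. This is Section~\ref{section:disentangling}, the core of the paper, and it is entirely absent from your proposal. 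Without it, the extension step is not sound even once the quotas are matched. In short: you correctly diagnosed one obstruction (quota propagation) but missed a second one (cycle avoidance after extension), and the latter is where most of the work in this paper actually lies.
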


\begin{comment}
\begin{theorem}[Barát--Gerbner~\cite{BaGe14}; Thomassen~\cite{Th13a}]\label{theorem:conj-equivalence}
Let $T$ be a tree on $t$ vertices, with $t>4$. The following statements are
equivalent.
\begin{itemize}
\item[(i)] There exists a natural number $k'_T$ such that, if $G$ is a $k'_T$-edge-connected 
		bipartite graph and $|E(G)|$ is divisible by $|E(T)|$, then $G$ admits a $T$-decomposition.
\item[(ii)] There exists a natural number $k_T$ such that, if $G$ is a $k_T$-edge-connected 
		graph and $|E(G)|$ is divisible by $|E(T)|$, then $G$ admits a $T$-decomposition.
\end{itemize}
	Furthermore, 
	\(
		k_T\leq 4k'_T + 16(t-1)^{6t-5}
	\)
	and, if in addition $T$ has diameter at most \(3\), then
	\(
		k_T\leq 4k'_T + 16t(t-1).
	\)
\end{theorem}
\end{comment}

  Most of the known results on Conjecture~\ref{conj:dec} were
  obtained by Thomassen~\cite{Th12,Th08b,Th08a,Th13a,Th13b}: it holds
  for stars, paths of length~\(3\), a family of bistars, and for paths whose length is a power of~$2$.
  In 2014, we \cite{BoMoOsWa15-LAGOS} proved that it holds for paths of
    length~\(5\), and recently Merker~\cite{Me15+} proved that it holds for all trees
    with diameter at most~$4$, and also for some trees with diameter
    at most~$5$, including paths of length~5.

   In this paper we verify Conjecture~\ref{conj:dec-bip} (and
    Conjecture~\ref{conj:dec}) for paths of any given length.  More
    specifically, we prove that, for \(P_\ell\), the path of length $\ell$, 
    we have \(k_{P_{\ell}}' \leq 4\ell^2+10\ell-4\),
    %if $\ell$ is odd; and \verm{\(k_{P_{\ell}}' \leq 12\ell-4 + 4r\),}
    if $\ell$ is odd; and \(k_{P_{\ell}}' \leq 26\ell + 8r-8\),
    with $r=\max\{32(\ell-1),\ell(\ell+2)\}$, if $\ell$ is even.

  In our proof (for $P_\ell$) we use a generalization of a 
  technique used by Thomassen~\cite{Th08b} to obtain an initial
  decomposition into trails of length~\(\ell\).
  We also borrow some ideas from a technique that we used
  in~\cite{BoMoWa14+} for regular graphs. A central part of this work
  concerns the ``disentangling'' of the undesired trails of our initial
  decomposition to construct a path decomposition.

  The paper is organized as follows.  In Section~\ref{sec:def} we give
  some definitions, establish the notation and state some auxiliary
  results needed in the proof of our main results. 
    In Section~\ref{section:disentangling} we present our main
    tool, called \emph{Disentangling Lemma}, that allows us to switch
    edges between the elements of a (special) trail decomposition so
    as to obtain a decomposition into paths.
  In Section~\ref{section:factorizations} we prove that highly
  edge-connected graphs admit well-structured decompositions with
  good properties that we can explore in the rest of the proof.  
  In Sections~\ref{section:odd-paths} and~\ref{section:even-paths} we
  present the results used in Section~\ref{section:high} to obtain the
  decompositions into paths of fixed odd and even length,
  respectively.  
   In Figure~\ref{fig:outline} we present a diagram that shows how
  the results (indicated in a rectangular box) are connected with each other,
  leading to the proof of our two main results,
  Theorems~\ref{thm:high-odd}~and~\ref{thm:high-even}. In this
  diagram, an arrow from a box A to a box B indicates that the result in A is
  used to prove the result in B.

  An extended abstract~\cite{BoMoOsWa15-euro1} of this work was
  accepted to \textsc{eurocomb 2015}. We have modified some previous
  terminology, but the techniques and results are essentially those we
  have mentioned in the extended abstract. This work grew out from our
  previous work on decomposition into paths of length
  five~\cite{BoMoOsWa14+}.  The reader may find useful to see the
  simpler ideas presented in this previous work, to get a better
  understanding of the technique used in this paper.

%\section{Outline of the proofs of the main results}\label{section:outline}

 \begin{figure}[h]
 	\centering
 	%\begin{figure}[here]
%	\centering
{

\tikzset{box/.style={
rectangle,
draw,
anchor=center,
text centered,
thick
},
}

\begin{tikzpicture}[scale = 0.7]

	\tikzstyle{every circle node} = [draw];
	\def\maxA{5}
	\def\maxB{8}
	\def\raioA{6}
	\def\raioB{2}

	\node (nash) [box,text width=2.8cm] at (-9.05,2.5) 
	      {Lemma \ref{lemma:splitting-of-vertices}\\Nash-Williams};
	\node (mader) [box,text width=2.8cm] at (-4.53,2.5) {Theorem \ref{thm:Mader}\\Mader};
	\node (no cut-vertex) [box,text width=2.8cm] at (0,2.5) {Lemma \ref{lemma:non-cut-vertex}\\
	      cut-vertex};
	\node (matchings) [box,text width=2.8cm] at (4.53,2.5) {Theorem \ref{thm:Kano}\\ 1-factor};
	\node (petersen) [box,text width=2.8cm] at (9.05,2.5) {Theorem \ref{thm:Petersen}\\ 
	      Petersen};
	
	\node (simple factorization) [box,text width=15.5cm] at (0,5) 
		{Lemma \ref{lemma:1-special-decomposition}\\Simple Fractional Factorization};
	
%	\node (frac factorization) [box,text width=15.5cm] at (0,5) 
%		{Corollary \ref{corollary:odd-special-decomposition}\\Fractional Factorization};
		
	\node (3-4) [box,text width=3.5cm] at (0,7.5) 
		{Theorem \ref{thm:edge-disjoint-spanning-trees}};

	\node (factorization) [box,text width=12.5cm] at (0,10) 
		{Lemma \ref{lem:tight-good-initial-decomposition}\\Initial Decomposition};	
		
	\node (4-4) [box,text width=2.2cm] at (-2,12.5) 
		{Lemma \ref{lemma:augmenting-sequence}};
	\node (4-7) [box,text width=2.2cm] at (2,12.5) 
		{Lemma \ref{lemma:choose-element-all}};	
		
	\node (disentangling) [box,text width=5cm] at (0,15) 
		{Lemma \ref{lemma:disentangling}\\Disentangling Lemma};	
	\node (3-12) [box,text width=1.6cm] at (9,15) 
		{Lemma \ref{lem:barat-part-3}};
		
	\node (6-2 and 6-3) [box,text width=2cm] at (-4.9,17.5) 
		{Lemmas\\\ref{lemma:3-PD} and \ref{lemma:odd-complete}};
	\node (6-4) [box,text width=1.8cm] at (-1.6,17.5) 
		{Lemma \ref{lemma:odd-path-dec}};
	\node (7-4) [box,text width=1.8cm] at (1.6,17.5) 
		{Lemma \ref{lemma:even-path-dec}};
	\node (7-2 and 7-3) [box,text width=2cm] at (4.9,17.5) 
		{Lemmas\\\ref{lemma:2-PD} and \ref{lemma:even-complete}};
	\node (5-6) [box,text width=2.2cm] at (9.4,17.5) 
		{Corollary\\\ref{corollary:even-special-decomposition}};
		
	\node (bifac odd) [box,text width=3cm] at (-8.85,20) 
		{Lemma \ref{lemma:odd-strong-factorization}\\Bifactorization};
	\node (bifac path odd) [box,text width=4cm] at (-3.3,20) 
		{Theorem \ref{thm:odd-path-decomposition}\\Bifact.$\Rightarrow$ Path dec.};
	\node (bifac path even) [box,text width=4cm] at (3.3,20) 
		{Theorem \ref{thm:even-path-decomposition}\\Bifact.$\Rightarrow$ Path dec.};
	\node (bifac even) [box,text width=3cm] at (8.85,20) 
		{Lemma \ref{lemma:even-strong-factorization}\\Bifactorization};
		
	\node (odd dec) [box,text width=7.3cm] at (-5.7,22.5) 
		{Theorem \ref{thm:high-odd}\\Odd-path decomposition};
	\node (even dec) [box,text width=7.3cm] at (5.7,22.5) 
		{Theorem \ref{thm:high-even}\\Even-path decomposition};

%	\draw[->,thick] (nash) -- (-9.05,1.65);
%	\draw[->,thick] (mader) -- (-4.53,1.65);
%	\draw[->,thick] (no cut-vertex) -- (0,1.65);
%	\draw[->,thick] (matchings) -- (4.53,1.65);
%	\draw[->,thick] (petersen) -- (9.05,1.65);
	
%	\draw[->,thick] (simple factorization) -- (frac factorization);

	\draw[->,thick] (nash) -- (-9.05,4.15);
	\draw[->,thick] (mader) -- (-4.53,4.15);
	\draw[->,thick] (no cut-vertex) -- (0,4.15);
	\draw[->,thick] (matchings) -- (4.53,4.15);
	\draw[->,thick] (petersen) -- (9.05,4.15);

	\draw[->,thick] (10.75,5.8) -- (10.75,16.71);
	\draw[->,thick] (-10.75,5.8) -- (-10.75,19.21);

	\draw[->,thick] (3-4) -- (0,9.21);

	\draw[->,thick] (7.22,10.8) -- (7.22,19.21);
	\draw[->,thick] (-7.22,10.8) -- (-7.22,19.21);
	
	\draw[->,thick] (4-4) -- (-2,14.15);
	\draw[->,thick] (4-7) -- (2,14.15);
	
	\draw[->,thick] (-1.68,15.8) -- (-1.68,16.7);
	\draw[->,thick] (1.68,15.8) -- (1.68,16.7);
	\draw[->,thick] (9,15.75) -- (9,16.7);

	\draw[->,thick] (-4.9,18.27) -- (-4.9,19.21);
	\draw[->,thick] (-1.6,18.27) -- (-1.6,19.21);
	\draw[->,thick] (1.6,18.27) -- (1.6,19.21);
	\draw[->,thick] (4.9,18.27) -- (4.9,19.21);
	\draw[->,thick] (9.4,18.27) -- (9.4,19.21);
	
	\draw[->,thick] (-8.85,20.75) -- (-8.85,21.65);
	\draw[->,thick] (-3.3,20.75) -- (-3.3,21.65);
	\draw[->,thick] (3.3,20.75) -- (3.3,21.65);
	\draw[->,thick] (8.85,20.75) -- (8.85,21.65);

%	\foreach \i in {1,...,6}
%	{
%		\node (\i) [rectangle,draw] at (\i*60:4) {fabio \i};
%	}
%	
%	\draw (1) -- (2);
%	\draw (1) -- (3);
%	\draw (1) -- (4);
%	\draw[very thick,color=red] (1) -- (5);
%	\draw[very thick,color=red] (1) -- (6);
%	\draw[very thick,color=red] (2) -- (3);
%	\draw[very thick,color=red] (2) -- (4);
%	\draw (2) -- (5);
%	\draw (2) -- (6);
%	\draw[very thick,color=red] (3) -- (4);
%	\draw (3) -- (5);
%	\draw (3) -- (6);
%	\draw (4) -- (5);
%	\draw (4) -- (6);
%	\draw[very thick,color=red] (5) -- (6);

\end{tikzpicture}
}

%\end{figure}
 	\caption{A diagram showing how the auxiliary results are
            used to build up the proof of the main results.}
 	\label{fig:outline}
 \end{figure}
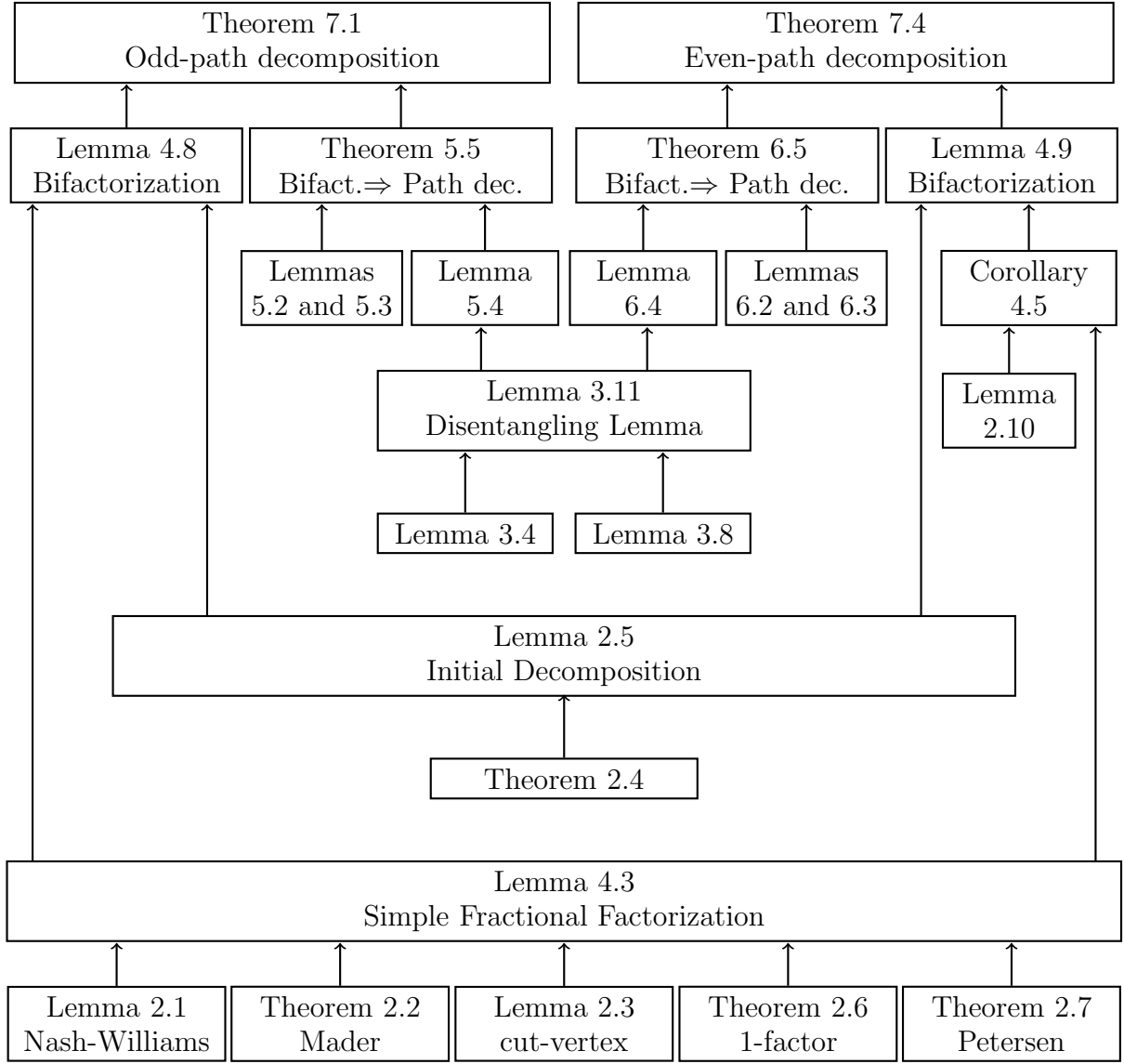

%%%%%%%%%%%%

\section{Notation and auxiliary results}\label{sec:def}

The basic terminology and notation used in this paper are standard
(see, e.g.~\cite{Bo98,Di10}).  All graphs considered here are
finite and have no loops. 
Let $G=(V,E)$ be a graph.
A \emph{path}~$P$ in $G$ is a sequence of distinct vertices $P=v_0v_1\cdots v_\ell$ such
that $v_iv_{i+1}\in E$, for $0\leq i\leq \ell-1$. The
\emph{length} of a path $P$ is the number of its edges. The path of
length~\(\ell\),
also called \emph{$\ell$-path}, is denoted by $P_\ell$.  It is also
convenient to refer to a path \(P = v_0v_1\cdots v_\ell\)
as the subgraph of \(G\)
induced by the edges \(v_iv_{i+1}\) for \(i = 0,\ldots,\ell-1\).

 We denote by \(d_G(v)\) the 
\emph{degree} of a vertex \(v\in V\) and, when \(G\) is clear from the 
context, we write \(d(v)\). 
Given \(F\subset E\), we denote by $G[F]$ the subgraph of $G$ induced by $F$, and 
we also denote by \(d_{F}(v)\) the number of edges in \(F\) that are incident to \(v\).
An \emph{orientation} \(O\) of a subset \(F\subset E\),
is an assignment of a direction (from one of its vertices to the other) to each edge in \(F\).
If an edge \(e = uv\) in \(F\) is directed from \(u\) to \(v\),
we say that \(e\) \emph{leaves} \(u\) and \emph{enters} \(v\).
Given a vertex \(v\) of \(G\), we denote by \(d_O^+(v)\) (resp. \(d_O^-(v)\))
the number of the edges in \(F\) that leave (resp. enter) \(v\) in \(O\).
An \emph{Eulerian graph} is a graph that contains only vertices of even degree, and
an \emph{Eulerian orientation} of an Eulerian graph \(G\) 
is an orientation \(O\) of \(E\) such that \(d_O^+(v) = d_O^-(v)\) 
for every vertex \(v\) in \(V\). 
Note that an Eulerian graph does not need to be connected.
Furthermore, we say that a subset $F\subset E$ is \emph{Eulerian}
if \(G[F]\) is Eulerian.  We denote by $G=(A,B;E)$ a
bipartite graph $G$ on vertex classes $A$ and $B$.

We say that a set 
$\{H_1,\ldots,H_k\}$ of graphs is a
\emph{decomposition} of a graph $G$ if $\bigcup_{i=1}^k
E(H_i) = E$ and $E(H_i)\cap E(H_j)=\emptyset$ for all $1\leq i<j\leq
k$. Let~\(\mathcal{H}\) be a family of graphs. An
\emph{\(\mathcal{H}\)-decomposition} $\D$ of~\(G\) is a
decomposition of~\(G\) such that each element of $\D$ is
isomorphic to an element of~\(\mathcal{H}\). Furthermore, if~\(\mathcal{H} =
\{H\}\), then we say that $\D$ is an~\emph{\(H\)-decomposition}.

%We assume here that the set of natural numbers does not contain zero. 
%Throughout this text, $\ell$, $k$ and  $r$ are natural numbers. 

\subsection{Vertex splittings}
Let $G=(V,E)$ be a graph and $v$ a vertex of \(G\). 
A set $S_v=\{d_1,\ldots,d_{s_v}\}$ of $s_v$ positive integers is called a 
\emph{subdegree sequence for $v$} if
$d_1+\ldots+d_{s_v}=d_G(v)$. We say that a graph $G'$ is obtained 
by a \emph{$(v,S_v)$-splitting} 
of \(G\)
if $G'$ is composed of 
$G-v$ together with $s_v$ new vertices $v_1,\ldots,v_{s_v}$ and {$d_G(v)$ new edges} such that $d_{G'}(v_i)=d_i$, for $1\leq i\leq 
s_v$, and $\bigcup_{i=1}^{s_v} N_{G'}(v_i) = N_G(v)$.

Let $G$ be a graph and consider a set $V'=\{v_1,\ldots,v_r\}$ of $r$
vertices of $G$. Let $S_{v_1},\ldots,S_{v_r}$ be subdegree sequences
for $v_1,\ldots,v_r$, respectively. {Let $H_1,\ldots,H_r$ be graphs
  obtained as follows: $H_1$ is obtained by a
  $(v_1,S_{v_1})$-splitting of $G$, the graph $H_2$ is obtained by a
  $(v_2,S_{v_2})$-splitting of $H_1$, and so on, up to $H_r$, which is
  obtained by a $(v_r,S_{v_r})$-splitting of $H_{r-1}$.} We say that
each $H_i$ is an \emph{$\{S_{v_1},\ldots,S_{v_i}\}$-detachment} of
$G$. 
%Roughly speaking, a detachment of $G$ is a graph obtained by
%successive applications of splitting operations on vertices of $G$
%(see Figure~\ref{fig:detachment}).  
%
%\begin{figure}[h]
%	\centering
%	\input{Figures/detachment}\hspace{1cm}
%	\input{Figures/detachment2}
 %       \caption{{A graph $G$ and a graph $H$ that is an
  %        $\{S_a,S_e\}$-detachment of~$G$, where $S_a=\{2,3\}$ and $S_e=\{2,2,2\}$.}}
%	\label{fig:detachment}
%\end{figure}	
Roughly speaking, a
detachment of $G$ is a graph obtained by successive
applications of splitting operations on vertices of $G$. In
Figure~\ref{fig:detachment}, the graph $H$ is an
$\{S_a,S_e\}$-detachment of $G$, where $S_a=\{2,3\}$ and
$S_e=\{2,2,2\}$. The next result provides sufficient conditions for the existence of
$2k$-edge-connected detachments of $2k$-edge-connected graphs.

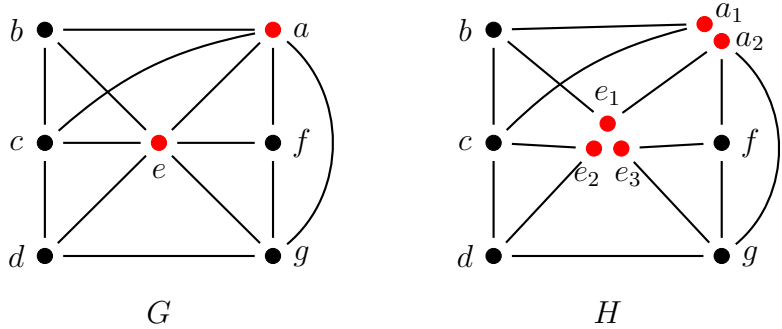
\begin{figure}[h]
	\centering
	\begin{tikzpicture}[scale = 1.5]

%	\tikzstyle{every circle node} = 
%	[draw,minimum size=2mm,inner sep=0pt,outer sep=4pt,fill=black];

%Graph G
%	\node (0) [black vertex] at (.5,2.5) {};
	
	\node (1) [black vertex] at (-.5,1.5) {};
	\node (2) [red vertex] at (1.5,1.5) {};
	
	\node (3) [black vertex] at (-.5,.5) {};
	\node (4) [red vertex] at (.5,.5) {};
	\node (5) [black vertex] at (1.5,.5) {};
	
	\node (6) [black vertex] at (-.5,-.5) {};
	\node (7) [black vertex] at (1.5,-.5) {};

%	\node (a) [] at ($(0) - (0.25,0)$) {$a$};
	\node (b) [] at ($(1) - (0.25,0)$) {$b$};
	\node (a) [] at ($(2) + (0.25,0)$) {$a$};
	\node (c) [] at ($(3) - (0.25,0)$) {$c$};
	\node (e) [] at ($(4) - (0.0,+0.25)$) {$e$};
	\node (f) [] at ($(5) + (0.25,0)$) {$f$};
	\node (d) [] at ($(6) - (0.25,0)$) {$d$};
	\node (g) [] at ($(7) + (0.25,0)$) {$g$};
	
	\node (G) [] at ($(4) + (0.0,-1.5)$) {$G$};
	
%	\draw[thick] (0) -- (1);
%	\draw[thick] (0) -- (2);
	\draw[thick] (1) -- (2);
	\draw[thick] (1) -- (3);
	\draw[thick] (1) -- (4);
	\draw[thick] (2) -- (4);
	\draw[thick] (2) -- (5);
	\draw[thick] (3) -- (4);
	\draw[thick] (3) -- (6);
	\draw[thick] (4) -- (5);
	\draw[thick] (4) -- (6);
	\draw[thick] (4) -- (7);
	\draw[thick] (5) -- (7);
	\draw[thick] (6) -- (7);
	
	\draw[thick] (3) to [bend left=15] (2);
	\draw[thick] (7) to [bend right=50] (2);

\end{tikzpicture}\hspace{1cm}
	\begin{tikzpicture}[scale = 1.5]

%	\tikzstyle{every circle node} = 
%	[draw,minimum size=2mm,inner sep=0pt,outer sep=4pt,fill=black];

%Graph H
%	\node (0) [black vertex] at (.5,2.5) {};
	
	\node (1) [black vertex] at (-.5,1.5) {};
	
	\node (2a) [red vertex] at (1.35,1.55) {};
	\node (2b) [red vertex] at (1.5,1.4) {};
	
	\node (3) [black vertex] at (-.5,.5) {};
	
	\node (4a) [red vertex] at (.5,.67) {};
	\node (4b) [red vertex] at (.38,.45) {};
	\node (4c) [red vertex] at (.62,.45) {};
	
	\node (5) [black vertex] at (1.5,.5) {};
	
	\node (6) [black vertex] at (-.5,-.5) {};
	\node (7) [black vertex] at (1.5,-.5) {};

%	\node (a) [] at ($(0) - (0.25,0)$) {$a$};
	\node (b) [] at ($(1) - (0.25,0)$) {$b$};
 	\node (a1) [] at ($(2a) + (0.22,0.1)$) {$a_1$};
 	\node (a2) [] at ($(2b) + (0.25,0)$) {$a_2$};
	\node (c) [] at ($(3) - (0.25,0)$) {$c$};
	\node (e1) [] at ($(4a) - (0.0,-0.25)$) {$e_1$};
	\node (e2) [] at ($(4b) - (0.06,+0.25)$) {$e_2$};
	\node (e3) [] at ($(4c) - (-0.06,+0.25)$) {$e_3$};
	\node (f) [] at ($(5) + (0.25,0)$) {$f$};
	\node (d) [] at ($(6) - (0.25,0)$) {$d$};
	\node (g) [] at ($(7) + (0.25,0)$) {$g$};
	
	\node (H) [] at ($(4a) + (0.0,-1.67)$) {$H$};
	
%	\draw[thick] (0) -- (1);
%	\draw[thick] (0) -- (2a);
	\draw[thick] (1) -- (2a);
	\draw[thick] (1) -- (3);
	\draw[thick] (1) -- (4a);
	\draw[thick] (2b) -- (4a);
	\draw[thick] (2b) -- (5);
	\draw[thick] (3) -- (4b);
	\draw[thick] (3) -- (6);
	\draw[thick] (4c) -- (5);
	\draw[thick] (4b) -- (6);
	\draw[thick] (4c) -- (7);
	\draw[thick] (5) -- (7);
	\draw[thick] (6) -- (7);
	
	\draw[thick] (3) to [bend left=15] (2a);
	\draw[thick] (7) to [bend right=50] (2b);

\end{tikzpicture}
        \caption{{A graph $G$ and a graph $H$ that is an
          $\{S_a,S_e\}$-detachment of $G$.}}
	\label{fig:detachment}
\end{figure}

The next result provides
sufficient conditions for the existence of $2k$-edge-connected
detachments of $2k$-edge-connected graphs.

\begin{lemma}[Nash--Williams~\cite{Na85}]\label{lemma:splitting-of-vertices}
  {Let \(G\) be a \(2k\)-edge-connected graph, where $k\geq 1$,
    and 
    $V(G)=\{v_1,\ldots,v_n\}$.} For every $v\in V(G)$,  let
      $S_{v}=\{d^v_1,\ldots,d^v_{s_{v}}\}$ be a {subdegree sequence for
      $v$ such that $d^v_i\geq 2k$ for $i=1,\ldots,s_{v}$. Then, there
      exists a $2k$-edge-connected} 
      $\{S_{v_1},\ldots,S_{v_{n}}\}$-detachment of $G$.
\end{lemma}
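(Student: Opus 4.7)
The plan is to proceed by induction on $N := \sum_{i=1}^{n}(s_{v_i}-1)$, the total number of elementary splittings required to reach the prescribed detachment. The base case $N=0$ is trivial, since each $S_{v_i}$ then contains a single element equal to $d_G(v_i)$ and $G$ itself is the desired detachment. For the inductive step it suffices to establish the \emph{single-split} version: if $v \in V(G)$ has a subdegree sequence $\{d_1,d_2\}$ with $d_1,d_2 \geq 2k$, then some $(v,\{d_1,d_2\})$-splitting of $G$ is still $2k$-edge-connected. Applied to any vertex with $s_v \geq 2$ (splitting off one piece of size $d^v_1$ into a new vertex and leaving the sequence $\{d^v_2,\dots,d^v_{s_v}\}$ for the remaining copy of $v$), this reduces $N$ by one while preserving all hypotheses, so the inductive hypothesis finishes the proof.

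For the single split, fix any partition $E_G(v) = E_1 \sqcup E_2$ with $|E_i|=d_i$, and let $G'$ be the resulting splitting, whose new vertices $v_1,v_2$ carry $E_1,E_2$. Cuts of $G'$ that do not separate $v_1$ from $v_2$ correspond bijectively to cuts of $G$ and hence already have size at least $2k$; the only cuts at risk are of the form $(X\cup\{v_1\},\overline{X}\cup\{v_2\})$ with $X \subseteq V(G)\setminus\{v\}$. Writing $a = e_G(v,X)$, $b = e_G(v,V(G)\setminus(X\cup\{v\}))$, $c = e_G(X,V(G)\setminus(X\cup\{v\}))$ and $x = |E_1 \cap E_G(v,X)|$, a direct count shows the two associated cuts have sizes $c + a + d_1 - 2x$ and $c + b - d_1 + 2x$. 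The $2k$-edge-connectivity of $G$ gives $c+a \geq 2k$ and $c+b \geq 2k$, and combined with $d_1,d_2 \geq 2k$ this ensures that for each individual $X$ the interval of admissible values of $x$ is non-empty.

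The real task is to produce one partition satisfying all these constraints simultaneously. I would pick $(E_1,E_2)$ to minimise the potential $\Phi(E_1,E_2) := \sum_{X \text{ bad}}(2k - c_{G'}(X\cup\{v_1\}))$, where $X$ is \emph{bad} if one of its two associated cuts drops below $2k$. If $\Phi > 0$ at a minimiser, fix a bad $X$; the feasibility computation above, together with $c+a, c+b \geq 2k$, forces the existence of an edge in $E_1$ and an edge in $E_2$ whose exchange strictly increases the deficient cut by $2$. The main obstacle --- and the combinatorial heart of the proof --- is to show that this swap can be chosen not to introduce a worse deficiency elsewhere. Here the decisive tool is submodularity of the cut function: if a newly bad cut $Y$ were to appear after the swap, submodularity applied to $X \cup Y$ and $X \cap Y$, combined with the $2k$-edge-connectivity of $G$, either yields a contradiction or reveals an alternative improving swap. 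Iterating the exchange argument one eventually reaches $\Phi=0$, producing the desired $2k$-edge-connected splitting. This is exactly the scheme used by Nash-Williams in his original detachment theorem.
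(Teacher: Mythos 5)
The paper does not prove this lemma; it is quoted directly from Nash-Williams~\cite{Na85} as a known result, so there is no in-paper proof to compare against. Your reduction to the single-split case is sound: the induction on $N = \sum (s_{v_i}-1)$ works because after splitting off one vertex of degree $d_1^v \geq 2k$ the remaining copy of $v$ still has degree $\sum_{i\geq 2} d_i^v \geq 2k$ (as $s_v \geq 2$ forces at least one remaining summand $\geq 2k$), and your cut computation giving sizes $c+a+d_1-2x$ and $c+b-d_1+2x$ is correct, as is the feasibility check for a single fixed $X$.

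The gap is where you yourself flag it: you need a single partition $E_G(v)=E_1\sqcup E_2$ whose value of $x_X := |E_1\cap E_G(v,X)|$ lies in the (nonempty) feasible window for \emph{every} $X$ simultaneously, and the exchange argument is asserted rather than carried out. Minimizing a deficiency potential $\Phi$ and invoking ``submodularity of the cut function'' does not by itself yield the conclusion, because a swap that raises the cut for one deficient $X$ lowers it by the same amount for the complementary constraint and can push other nearly-tight sets $Y$ into deficiency. The actual combinatorial work in splitting/detachment proofs of this type is a structural analysis of the family of tight (dangerous) sets --- showing, via submodularity/posimodularity together with the degree lower bounds $d_1,d_2\geq 2k$, that two crossing dangerous sets must be uncrossable, that dangerous sets cannot cover $E_G(v)$ too densely, and hence that an admissible swap exists. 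None of this case analysis appears in the sketch, and without it the claim ``one eventually reaches $\Phi=0$'' is unsupported. Since this is precisely the content of Nash-Williams' theorem, the proposal as written is a plausible roadmap but not a proof; you would need to work out (or cite in detail) the uncrossing lemma on dangerous sets before the potential-minimization step closes.
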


\subsection{Edge liftings}
Let $G = (V,E)$ be a graph and $u,v,w$ be distinct vertices of $G$
such that $uv$, $vw \in E$. The multigraph
$G' = \big(V, (E \setminus \aset{uv,vw}) \cup \aset{uw}\big)$ is
called a \emph{\(uw\)-lifting}
(or, simply, a \emph{lifting}) at \(v\).
Note that $G'$ may have parallel edges connecting $u$ and $v$.
If for all distinct pairs $x$, $y \in V \setminus \aset{v}$, the
maximum number of edge-disjoint paths between $x$ and $y$ in $G'$ is
the same as in $G$, then the lifting at \(v\)
is called \emph{admissible}.  If \(v\)
is a vertex of degree \(2\),
then the lifting at \(v\)
is always admissible. Such a lifting together with the
deletion of \(v\)
is called a \emph{supression} of \(v\).
The next result is known as Mader's Lifting Theorem.

\begin{theorem}[Mader~\cite{Ma78}]\label{thm:Mader}
	Let $G$ be a  multigraph and $v$ a vertex of $G$. 
	If $v$ is not a cut-vertex, \hbox{$d_G(v) \geq 4$,} and $v$ has at least 2 neighbors, 
	then there exists an admissible lifting at $v$. 
\end{theorem}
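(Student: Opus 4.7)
The plan is to argue by contradiction using the classical \emph{uncrossing} technique. For a subset $X \subseteq V(G)$, let $d(X)$ denote the size of the cut $[X, V(G) \setminus X]$, and for $x, y \in V(G)$, let $\lambda_G(x, y)$ denote the maximum number of pairwise edge-disjoint $xy$-paths, which by Menger's theorem equals the minimum cut separating them. A direct computation shows that the lifting of $uv, vw$ at $v$ is non-admissible if and only if there exist $x, y \in V(G) \setminus \{v\}$ and a set $X$ with $v, x \in X$ and $y, u, w \notin X$ satisfying $d(X) = \lambda_G(x, y)$; I will call such an $X$ a \emph{$v$-tight set avoiding $\{u, w\}$}, as its size drops by $2$ under the lifting.

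Suppose for contradiction that no admissible lifting at $v$ exists. Since $v$ has at least two neighbors, there is a pair $\{u, w\}$ of distinct neighbors of $v$, and by hypothesis some $v$-tight set avoids $\{u, w\}$. Among all $v$-tight sets that avoid at least one pair of distinct neighbors of $v$, I would pick one, call it $X$, of maximum size; by construction $X$ misses at least two neighbors of $v$. Since $v$ is not a cut-vertex of $G$, it has a neighbor $z$ inside $X$. Fixing such a $z$ and a neighbor $u$ of $v$ with $u \notin X$, I would apply the non-admissibility assumption to the pair $\{z, u\}$ to produce a second $v$-tight set $Y$ with $z, u \notin Y$ and $v \in Y$.

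The contradiction will come from combining the submodular inequality $d(X) + d(Y) \geq d(X \cap Y) + d(X \cup Y)$ with the posimodular inequality $d(X) + d(Y) \geq d(X \setminus Y) + d(Y \setminus X)$, together with the Menger lower bounds $d(S) \geq \lambda_G(x', y')$ for any $x' \in S \setminus \{v\}$ and $y' \in V(G) \setminus S$. A case analysis on where the tightness witnesses of $X$ and $Y$ lie among the four regions $X \cap Y$, $X \setminus Y$, $Y \setminus X$, $V(G) \setminus (X \cup Y)$ will show that one of $X \cup Y$, or an analogous set produced by uncrossing (containing $v$, avoiding a pair of neighbors of $v$, and strictly larger than $X$), is itself a $v$-tight set, contradicting the maximality of $X$. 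The main obstacle will be this case analysis: one must track how the witnesses $x, y$ for $X$ and the analogous vertices for $Y$, along with the neighbors $z, u$, distribute among the four regions, and use the hypothesis $d_G(v) \geq 4$ to guarantee that in every case a strictly larger $v$-tight set emerges. The assumptions that $v$ has at least two neighbors and is not a cut-vertex are what allow $X$ and $Y$ to be selected with the structural properties required by the uncrossing step.
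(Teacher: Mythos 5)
The paper does not prove Mader's Lifting Theorem: it is stated in Section~\ref{sec:def} as an auxiliary result and cited directly to Mader's 1978 paper \cite{Ma78}, so there is no in-paper proof to compare your proposal against. Your general direction — characterize non-admissibility via a cut obstruction, take a maximal obstructing set, uncross with a second one using submodularity — is indeed the standard route to Mader's theorem (e.g., in Frank's treatments), so the plan is sensible. However, two concrete points in your sketch need repair.

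First, your characterization of non-admissibility is wrong. Lifting $uv,vw$ at $v$ drops $d(X)$ by exactly $2$ for any $X$ with $v\in X$ and $u,w\notin X$ (or the complementary situation). Hence the lifting fails to be admissible for the pair $x,y$ as soon as there is such an $X$ with $x\in X$, $y\notin X$ and $d(X)\le \lambda_G(x,y)+1$, not only $d(X)=\lambda_G(x,y)$: if $d(X)=\lambda_G(x,y)+1$ then $d_{G'}(X)=\lambda_G(x,y)-1<\lambda_G(x,y)$, and there need not exist any tight set with the same separation pattern. So the right objects are the \emph{dangerous} sets with deficit at most one, not your ``$v$-tight'' sets; this slack of one in $d(X)$ propagates into every subsequent uncrossing inequality and changes the arithmetic of the case analysis, so it cannot be waved away.

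Second, the step ``Since $v$ is not a cut-vertex of $G$, it has a neighbor $z$ inside $X$'' is not a consequence of $v$ not being a cut-vertex (one can have $v\in X$ with $X\supsetneq\{v\}$ and yet all neighbours of $v$ outside $X$ without $v$ being a cut-vertex). The correct way to force a neighbour of $v$ into $X$ is to use the degree hypothesis: if $v$ had no neighbour in $X$, then for $X'=X\setminus\{v\}$ one has $d(X')=d(X)-d_G(v)$, while $X'$ still separates $x$ from $y$, so Menger gives $\lambda_G(x,y)\le d(X')\le \big(\lambda_G(x,y)+1\big)-d_G(v)\le \lambda_G(x,y)-3$ by $d_G(v)\ge 4$, a contradiction. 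Thus the hypothesis $d_G(v)\ge 4$, together with the (corrected) dangerous-set bound, is what is doing the work here, and you should track separately where the ``not a cut-vertex'' hypothesis actually enters the full argument. Finally, the closing case analysis on where the Menger witnesses fall among the four regions is left entirely unwritten; since that is where most of the real work of Mader's proof lives, the proposal as it stands is a roadmap rather than a proof.
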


% {Note that, to apply Mader's Lifting Theorem to a multigraph, we need
%   a vertex that is not a cut-vertex. The next lemma gives a
%   condition for a multigraph to have such a vertex.} In what 
% follows, given a graph $G$ and two vertices $x,y\in V(G)$, we denote by 
% \(p_G(x,y)\) the maximum number of edge-disjoint paths between \(x\) and \(y\) in $G$.

The following simple lemma will be useful to apply Mader's
  Lifting Theorem.  In this lemma and thereafter, we denote by
  \(p_G(x,y)\) the maximum number of edge-disjoint paths between vertices \(x\)
  and \(y\) in a graph $G$.

\begin{lemma}\label{lemma:non-cut-vertex}
	Let \(G\) be a multigraph and let \(k\)
	be a positive integer. If $v$ is a vertex in~$G$ such that $d(v)<2k$ and
	$p_G(x,y)\geq k$ for any two distinct neighbors \(x\) and \(y\) of \(v\), 
	then $v$ is not a cut-vertex.
\end{lemma}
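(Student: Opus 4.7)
The plan is to argue by contradiction: assume that $v$ is a cut-vertex and exhibit two distinct neighbors $x,y$ of $v$ for which $p_G(x,y)\leq k-1$, violating the hypothesis.

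First I would select $x$ and $y$ carefully. Since $v$ is a cut-vertex, the connected component of $G$ that contains $v$ splits into at least two pieces in $G-v$. Each such piece must contain at least one neighbor of $v$ in $G$, for otherwise it would already be disconnected from $v$ inside $G$. Choose $x$ and $y$ to be neighbors of $v$ lying in two distinct components of $G-v$; the hypothesis ``$d(v)<2k$'' with $k\geq 1$ does not prevent this, since $v$ being a cut-vertex forces $d(v)\geq 2$.

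Next I would bound $p_G(x,y)$ by a local count at $v$. Every $x$--$y$ path in $G$ must use $v$ as an internal vertex, because $x$ and $y$ are separated in $G-v$; since $x,y\neq v$, each such path uses exactly two distinct edges incident to $v$. Any family of edge-disjoint $x$--$y$ paths therefore consumes pairwise disjoint pairs of edges at $v$, so its size is at most $\lfloor d(v)/2\rfloor$. Combined with $d(v)<2k$, this gives
\[
p_G(x,y)\;\leq\;\lfloor d(v)/2\rfloor\;\leq\;k-1,
\]
contradicting the assumption $p_G(x,y)\geq k$. Hence $v$ cannot be a cut-vertex.

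There is no serious obstacle here: the proof is essentially a one-step counting argument at $v$, and it carries over to multigraphs without change because parallel edges incident to $v$ are counted individually in both $d(v)$ and in the local use of edges by an $x$--$y$ path. The only minor point worth spelling out is the observation that each new component of $G-v$ around $v$ must contain a neighbor of $v$, which follows directly from the connectivity of the original component.
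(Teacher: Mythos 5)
Your proof is correct. The paper states this lemma without proof (calling it a "simple lemma"), so there is no in-paper argument to compare against; your counting argument is exactly the standard one that one would expect. Since $x$ and $y$ lie in distinct components of $G-v$, every $x$--$y$ path must visit $v$ as an internal vertex and hence uses a pair of edges at $v$; edge-disjoint paths use disjoint pairs, giving $p_G(x,y)\leq\lfloor d(v)/2\rfloor\leq k-1$ when $d(v)\leq 2k-1$, which contradicts the hypothesis. The observation that each component of $G-v$ arising from the component of $v$ contains a neighbor of $v$ is the right justification for the existence of two distinct neighbors to feed into the hypothesis, and the whole argument indeed works for multigraphs unchanged.
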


% \begin{proof}
% 	Let $G$, $k$ and $v$ be as in the statement. Suppose for a contradiction that \(v\) is a cut-vertex.  Let \(G_x\) and 
% \(G_y\)
% 	be two components of \(G -v\).  Let \(x \in V(G_x)\) and \(y\in
% 	V(G_y)\) be two neighbors of \(v\).  
% 	Since \(p_G(x,y) \geq k\), there are at least \(k\)
% 	edge-disjoint paths in \(G\) joining \(x\) to \(y\).  Since \(v\) is
% 	a cut-vertex, each of these paths must contain \(v\).  Thus, \(d(v)
% 	\geq 2k\), a contradiction.
% \end{proof}

\subsection{High edge-connectivity}

If \(G\)
is a graph that contains \(2k\)
pairwise edge-disjoint spanning trees, then, clearly, \(G\)
is \hbox{\(2k\)-edge-connected.}
The converse is not true, but as stated in the next theorem,  every
  \(2k\)-edge-connected graph contains \(k\)
  such trees~\cite{Na61,Tu61}. 

 \begin{theorem}[Nash-Williams~\cite{Na61}; Tutte~\cite{Tu61}]
 \label{thm:edge-disjoint-spanning-trees}
 	Let \(k\) be a positive integer. 
 	If \(G\) is a \(2k\)-edge-connected graph, 
 	then \(G\) contains \(k\) pairwise edge-disjoint spanning trees.
 \end{theorem}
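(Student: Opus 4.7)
The plan is to derive this from the sharper Nash-Williams/Tutte partition characterization: a graph \(G\) admits \(k\) pairwise edge-disjoint spanning trees if and only if, for every partition \(\mathcal{P} = \{V_1,\ldots,V_r\}\) of \(V(G)\) into \(r \geq 1\) nonempty parts, the number \(e_G(\mathcal{P})\) of edges of \(G\) with endpoints in distinct parts of \(\mathcal{P}\) satisfies \(e_G(\mathcal{P}) \geq k(r-1)\). I would take this characterization as the main input and then deduce the stated theorem by a one-line counting argument.

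Concretely, the deduction goes as follows. Assume \(G\) is \(2k\)-edge-connected and fix a partition \(\mathcal{P} = \{V_1,\ldots,V_r\}\) with \(r \geq 2\) (the case \(r=1\) is trivial). For each \(i\), the set \(V_i\) is a proper nonempty subset of \(V(G)\), so by \(2k\)-edge-connectivity the edge cut separating \(V_i\) from \(V(G)\setminus V_i\) has size at least \(2k\). Every cross-edge of \(\mathcal{P}\) lies in exactly two of these \(r\) cuts, so summing gives
\[
  2\, e_G(\mathcal{P}) \;=\; \sum_{i=1}^{r} \bigl|E\bigl(V_i,\, V(G)\setminus V_i\bigr)\bigr| \;\geq\; 2kr,
\]
whence \(e_G(\mathcal{P}) \geq kr \geq k(r-1)\). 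Applying the characterization to \(G\) yields the desired \(k\) pairwise edge-disjoint spanning trees.

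The hard part is the characterization itself, which is the genuine content of the Nash-Williams/Tutte theorem; the averaging step above is almost automatic once one has it. Necessity is immediate: contracting each \(V_i\) to a point turns any spanning tree into a connected spanning subgraph on \(r\) vertices, contributing at least \(r-1\) cross-edges, so \(k\) edge-disjoint spanning trees force \(e_G(\mathcal{P}) \geq k(r-1)\). For sufficiency, a clean route is to apply the matroid union theorem of Edmonds and Nash-Williams to \(k\) copies of the cycle matroid \(M(G)\): the rank of the union equals \(k(n-1)\) precisely when the partition inequality holds for every \(\mathcal{P}\), and any basis of this union decomposes into \(k\) edge-disjoint spanning trees. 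Alternatively one may argue directly, maintaining \(k\) edge-disjoint forests and using the partition condition to produce augmenting alternating sequences whenever some forest has fewer than \(n-1\) edges. Either route completes the proof.
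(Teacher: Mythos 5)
The paper states this theorem with citations to Nash-Williams and Tutte and gives no proof of its own; it is invoked as a black box. Your derivation is the standard one: you invoke the Nash-Williams/Tutte tree-packing characterization (every partition of \(V(G)\) into \(r\) parts has at least \(k(r-1)\) cross-edges if and only if \(G\) has \(k\) edge-disjoint spanning trees) and then deduce the stated corollary by summing the \(2k\)-edge-connectivity lower bound over the \(r\) one-sided cuts and dividing by two, obtaining \(e_G(\mathcal{P}) \geq kr \geq k(r-1)\). That averaging step is correct, the necessity sketch by contraction is correct, and the reference to matroid union (or alternatively the direct augmenting-forest argument) for sufficiency is accurate. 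Since the genuine content is the partition theorem itself, which you quite rightly identify and outsource, this is a complete and faithful account of why the statement holds; there is no gap, and nothing to compare against in the paper since the paper omits the proof entirely.
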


  Using Theorem~\ref{thm:edge-disjoint-spanning-trees} and a recent
  result of Lov{\'a}sz, Thomassen, Wu and Zhang~\cite{LoThWuZh13}, one
  can prove the following lemma, which enables us to treat highly
  edge-connected bipartite graphs as regular bipartite graphs. It is a
  slight generalization of Proposition~\(2\)
  in~\cite{Th13a}. A proof of this lemma is given
  in~\cite{BoMoOsWa14+}.

\begin{lemma}
\label{lem:tight-good-initial-decomposition}
	Let \(k\geq 2\) and \(r\) be positive integers.
	If \(G=(A_1, A_2;E)\) is a \((6k + 4r-4)\)-edge-connected bipartite 
	graph and \(|E|\) is divisible by 
	\(k\), 
	then \(G\) admits a decomposition into two spanning 
	\(r\)-edge-connected graphs \(G_1\) and \(G_2\) such 
	that,
	the degree in \(G_i\) of each vertex of \(A_i\) is divisible by \(k\),
	for $i=1,2$.
\end{lemma}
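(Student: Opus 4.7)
The plan is to combine the spanning-tree packing from Theorem \ref{thm:edge-disjoint-spanning-trees} (to secure the $r$-edge-connectivity of each $G_i$) with the modulo-$k$ orientation result of Lovász, Thomassen, Wu, and Zhang \cite{LoThWuZh13} (to enforce the divisibility of degrees), following the template of Thomassen's proof of the $r=1$ version in \cite{Th13a}.

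Since $6k+4r-4=2(3k+2r-2)$, Theorem \ref{thm:edge-disjoint-spanning-trees} yields $3k+2r-2$ pairwise edge-disjoint spanning trees of $G$. I would reserve $2r$ of them, split into two families $\mathcal{T}_1,\mathcal{T}_2$ of $r$ trees each, and set $F_i=\bigcup_{T\in\mathcal{T}_i}E(T)$. Let $H=G-(F_1\cup F_2)$; the remaining $3k-2$ trees still lie in $H$, so $H$ is $(3k-2)$-edge-connected. My goal is then to split $E(H)=E_1\cup E_2$ so that $G_i:=(V,F_i\cup E_i)$ satisfies $k\mid d_{G_i}(v)$ for $v\in A_i$; equivalently, $d_{E_i}(v)\equiv -d_{F_i}(v)\pmod{k}$ for each $v\in A_i$. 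Interpreting this as orienting the edges of $H$ (an edge goes into $E_1$ if oriented from $A_1$ to $A_2$, otherwise into $E_2$), I need an orientation of $H$ with prescribed out-degrees modulo $k$, which the LTWZ theorem supplies under a suitable edge-connectivity hypothesis (met by $H$) and a single mod-$k$ consistency condition. Using bipartiteness of $G$ I have $\sum_{v\in A_i}d_{F_i}(v)=|F_i|$ and $\sum_v d_H(v)=2|E(H)|$, so the sum of the prescribed divergences $d^+(v)-d^-(v)=2c_v-d_H(v)$ (with $c_v=-d_{F_i}(v)$ for $v\in A_i$) equals $-2(|F_1|+|F_2|+|E(H)|)=-2|E(G)|$, which vanishes modulo $k$ by the hypothesis $k\mid |E(G)|$.

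Given the orientation, each $G_i$ satisfies the divisibility requirement by construction, and contains the $r$ edge-disjoint spanning trees of $\mathcal{T}_i$, hence is $r$-edge-connected. The main obstacle is matching the edge-connectivity budget to the precise form of the LTWZ statement: the hypothesis $6k+4r-4$ splits into $4r$ (which Theorem \ref{thm:edge-disjoint-spanning-trees} converts into the $2r$ reserved spanning trees) and $6k-4$ (which is just enough to leave $H$ with the edge-connectivity LTWZ needs), and these have to line up exactly. A subsidiary point is that for even $k$, prescribing $d^+(v)\bmod k$ through the mod-$k$ divergence $d^+(v)-d^-(v)$ is a parity-restricted request; this is automatic in our setting because the prescribed residues arise from an actual degree count in $F_i$, hence are realizable.
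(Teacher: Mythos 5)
Your outline matches the paper's approach (the paper defers the proof to~\cite{BoMoOsWa14+}, noting that it combines Theorem~\ref{thm:edge-disjoint-spanning-trees} with the Lovász--Thomassen--Wu--Zhang theorem): pack $3k+2r-2$ edge-disjoint spanning trees, reserve $r$ per side to secure the $r$-edge-connectivity of each $G_i$, and orient the remaining $(3k-2)$-edge-connected graph $H$ so that out-degrees land in prescribed residue classes modulo $k$, the orientation dictating which side each $H$-edge joins. Your consistency check, that the prescribed total is congruent to $|E(G)| \equiv 0 \pmod k$, is also the right one.

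One point should be tightened: your parenthetical about even $k$ identifies the wrong obstacle. The issue is not realizability or parity of the prescribed residues (those are fine, as you observe), but that for even $k$, prescribing the divergence $d^+(v)-d^-(v)\pmod k$ only pins down $d^+(v)\pmod{k/2}$, so the $\mathbb{Z}_k$-connectivity (divergence) form of the Lovász--Thomassen--Wu--Zhang theorem is genuinely too weak for the claim at hand. What one must invoke is the ``strong'' out-degree formulation, originally Thomassen's Theorem in~\cite{Th12} with the edge-connectivity constant improved in~\cite{LoThWuZh13}, which directly yields an orientation satisfying $d^+(v)\equiv\beta(v)\pmod k$ whenever $\sum_v\beta(v)\equiv|E(H)|\pmod k$; this applies uniformly for all $k\geq 2$ and dissolves the even-$k$ worry at the source, rather than by appealing to where the residues come from.
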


The following two results on regular multigraphs will be used
  later (see Figure~\ref{fig:outline}).

  % We recall that, for a natural number~\(r\),
  % an \emph{\(r\)-factor}
  % in \(G\) is an \(r\)-regular spanning subgraph of \(G\).

\begin{theorem}[Von Baebler~\cite{Ba37} (see also~{\cite[Theorem 
2.37]{AkKa11}})]\label{thm:Kano}
Let $r \geq 2$ be a positive integer, and $G$ be an $(r-1)$-edge-connected 
$r$-regular multigraph of even order. Then $G$ has a $1$-factor. 
\end{theorem}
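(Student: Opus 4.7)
The plan is to apply Tutte's \(1\)-factor theorem, which asserts that a multigraph \(H\) has a perfect matching if and only if \(o(H-S)\le |S|\) for every \(S\subseteq V(H)\), where \(o(\cdot)\) denotes the number of odd components. So I would argue by contradiction: suppose there is a set \(S\subseteq V(G)\) with \(o(G-S)>|S|\), and derive a contradiction from the hypotheses of \(r\)-regularity and \((r-1)\)-edge-connectivity.

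The first step is a parity observation. Since \(|V(G)|\) is even, and writing \(|V(G)|-|S|\) as the sum of the sizes of the components of \(G-S\), the even components contribute even integers and the odd components contribute odd integers, so \(|V(G)|-|S|\equiv o(G-S)\pmod 2\). Hence \(|S|\equiv o(G-S)\pmod 2\), and the strict inequality \(o(G-S)>|S|\) upgrades to \(o(G-S)\ge |S|+2\).

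The core step is a sharp lower bound on \(e(C,S)\), the number of edges between \(S\) and each odd component \(C\) of \(G-S\). Because \(G-S\) has at least two components, \(C\) is a proper nonempty vertex subset, so the \((r-1)\)-edge-connectivity of \(G\) gives \(e(C,V(G)\setminus C)\ge r-1\); and as \(C\) is a component of \(G-S\) every such edge has its other end in \(S\), yielding \(e(C,S)\ge r-1\). A parity argument then gives one more: summing degrees in \(C\) we have \(e(C,S)\equiv r|C|\equiv r\pmod 2\) since \(|C|\) is odd, so \(e(C,S)\) has opposite parity to \(r-1\), forcing \(e(C,S)\ge r\) whether \(r\) is odd or even.

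Finally I would sum over the odd components and use that \(\sum_C e(C,S)\le e(S,V(G)\setminus S)\le r|S|\) by \(r\)-regularity, obtaining \(r\cdot o(G-S)\le r|S|\), i.e.\ \(o(G-S)\le |S|\), contradicting \(o(G-S)\ge |S|+2\). Thus Tutte's condition is satisfied and \(G\) admits a \(1\)-factor. The one place that requires care is closing the gap between \(r-1\) and \(r\) in the cut bound; without the parity refinement coming from odd components and regularity, the counting would fall just short, so this small parity step is really the heart of the argument.
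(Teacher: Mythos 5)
The paper states this theorem (attributing it to Von Baebler, with a pointer to Akiyama--Kano) as a known result and does not supply a proof, so there is no in-paper argument to compare against. Your proof via Tutte's $1$-factor condition is correct and is in fact the standard argument for this theorem: the degree-sum parity $e(C,S)\equiv r|C|\equiv r\pmod 2$ for an odd component $C$ rules out $e(C,S)=r-1$ and so lifts the $(r-1)$-edge-connectivity bound to $e(C,S)\ge r$, and then $r\cdot o(G-S)\le\sum_C e(C,S)\le e(S,V(G)\setminus S)\le r|S|$ by $r$-regularity closes the contradiction. Two small remarks: the initial parity upgrade to $o(G-S)\ge |S|+2$ is not actually needed to reach the final contradiction (you already contradict $o(G-S)>|S|$), though you use it tidily to guarantee $G-S$ has at least two components so each odd component is a proper cut side; equivalently one can simply dispose of $S=\emptyset$ at the outset, since $G$ is connected of even order and hence has no odd component. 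Neither point is a gap — the argument as written is sound, including for multigraphs, where Tutte's condition applies unchanged.
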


%The next result is the classical Petersen's \(2\)-factor Theorem.

\begin{theorem}[Petersen~\cite{Pe1891}]\label{thm:Petersen}
	If \(G\) is a \(2k\)-regular multigraph,
	then \(G\) admits a decomposition into \(2\)-factors.
\end{theorem}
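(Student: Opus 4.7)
The plan is to reduce the problem to finding a perfect matching decomposition of a $k$-regular bipartite multigraph, using an Eulerian circuit as the bridge.

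First I would assume that $G$ is connected (otherwise I treat each component separately). Since $G$ is $2k$-regular, every vertex has even degree, so each component admits an Eulerian circuit. Traversing such a circuit yields an orientation $O$ of $E(G)$ in which $d_O^+(v)=d_O^-(v)=k$ for every $v\in V(G)$.

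Next I would construct an auxiliary bipartite multigraph $H=(A,B;F)$ as follows: for each vertex $v\in V(G)$, introduce two copies $v^+\in A$ (a ``tail'' copy) and $v^-\in B$ (a ``head'' copy); for each edge of $G$ oriented from $u$ to $v$ in $O$, place a corresponding edge $u^+v^-$ in $F$. Because $d_O^+(v)=d_O^-(v)=k$, the multigraph $H$ is $k$-regular and bipartite.

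By König's edge-colouring theorem for bipartite multigraphs (which, in the multigraph setting, can be proved by iteratively extracting perfect matchings via Hall's theorem), $H$ admits a decomposition $M_1,\dots,M_k$ into $k$ perfect matchings. Each $M_i$ pulls back to a spanning subgraph $F_i$ of $G$ by identifying $v^+$ with $v^-$ for every $v$. Since $M_i$ is a perfect matching of $H$, each vertex $v$ of $G$ is incident with exactly one edge of $F_i$ arriving at $v^-$ and exactly one edge leaving $v^+$; hence $F_i$ is $2$-regular, that is, a $2$-factor of $G$. The family $\{F_1,\dots,F_k\}$ is then the desired decomposition of $G$ into $2$-factors.

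The main obstacle is purely bookkeeping in the multigraph setting: parallel edges of $G$ must correspond to parallel edges of $H$, and the bijection between the edges of $G$ and the edges of $H$ must be respected throughout the construction. Beyond this, the only nontrivial ingredient is König's theorem for bipartite multigraphs; the remainder of the argument is just the Eulerian-orientation trick that turns a $2k$-regular multigraph into a $k$-regular bipartite multigraph.
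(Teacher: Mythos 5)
Your proof is correct, but note that the paper does not prove Theorem~\ref{thm:Petersen}: it is quoted as a classical result of Petersen (1891) and used as a black box, so there is no proof in the paper to compare against. What you have given is the standard textbook argument: reduce to the connected case, use Euler's theorem to get an orientation with $d_O^+(v)=d_O^-(v)=k$, pass to the $k$-regular bipartite multigraph on the in/out-copies of the vertices, decompose it into $k$ perfect matchings by K\"onig's theorem (equivalently, by iterated Hall), and pull each matching back to a $2$-regular spanning subgraph of $G$. All steps hold in the multigraph setting, since the edge sets of $G$ and $H$ are in bijection and parallel edges of $G$ give parallel edges of $H$; and since the paper disallows loops, the pulled-back subgraphs really are $2$-regular (a digon arising from two parallel edges of $G$ is still a legitimate component of a $2$-factor in a multigraph). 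The only presentational quibble is that you do not need to separately invoke K\"onig and Hall; a single appeal to the fact that a regular bipartite multigraph decomposes into perfect matchings suffices.
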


% \azul{Following ideas from Bárat and Gerbner~\cite{BaGe14}, we state
%   some results that will be useful in the proof of
%   Lemma~\ref{lem:barat-part-3}, which is used to deal with decompositions into
%   paths of even length.}

The next results are obtained by generalizing a technique used
  by Bárat and Gerbner~\cite{BaGe14}.  They are useful in the proof of
  Lemma~\ref{lem:barat-part-3}, which is used to deal with
  decompositions into paths of even length.

\begin{theorem}[Theorem 20 in~\cite{ElNaVo02}]\label{thm:elnavo02}
  Let $m$ be a positive integer. If \(G\)
  is an \(m\)-edge-connected
  graph, then \(G\) contains a spanning tree \(T\) such that
\(d_T(v) \leq  \lceil d_G(v)/m\rceil + 2\) for every vertex \(v\).
\end{theorem}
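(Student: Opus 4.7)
The plan is to use a local edge-exchange (swap) argument on spanning trees of $G$. For each vertex $v$, write $k_v := \lceil d_G(v)/m \rceil$; the goal is to produce a spanning tree $T$ with $d_T(v) \le k_v + 2$. I would start from an arbitrary spanning tree $T$ of $G$ (which exists since any $m$-edge-connected graph with $m\ge 1$ is connected) and try to drive the potential $\Phi(T) := \sum_v \max\{0,\, d_T(v) - k_v - 2\}$ down to $0$ by repeated swaps.

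The core structural step is the following \emph{swap lemma}: if $d_T(v) > k_v$ for some $v$, then there exist a tree edge $vu$ and a non-tree edge $xy$ with $v \notin \{x,y\}$ such that $T - vu + xy$ is again a spanning tree (reducing $d_T(v)$ by $1$). To prove it, fix any tree-neighbor $u$ of $v$, let $A_u \ni v$ and $B_u \ni u$ be the two components of $T - vu$, and set $e_u := |E_G(v, B_u)|$. By $m$-edge-connectivity, $|E_G(A_u, B_u)| \ge m$, and of these edges at most $e_u$ are incident to $v$. As $u$ ranges over the tree-neighbors of $v$, the sets $B_u$ partition $V \setminus \{v\}$, so $\sum_u e_u = d_G(v) \le m k_v < m\, d_T(v)$. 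Hence some $u$ satisfies $e_u \le m-1$, which means the corresponding cut $E_G(A_u, B_u)$ contains at least one edge $xy$ with $v \notin \{x,y\}$, giving the desired swap.

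The main obstacle is that this swap raises $d_T(x)$ and $d_T(y)$ by $1$ each, possibly increasing $\Phi$ if $x$ or $y$ already sits at the threshold $k_{\cdot}+2$. Addressing this is where the $+2$ slack in the statement becomes crucial: a refined count shows that when $d_T(v) \ge k_v + 3$, at least three tree-neighbors $u$ have $e_u \le m-1$ (since $\sum_u e_u \le m k_v$), yielding several swap candidates distributed over different cuts. I would then argue, by pigeonhole against the number of currently ``tight'' vertices, that at least one swap candidate has both endpoints below threshold, strictly decreasing $\Phi$. Alternatively, and perhaps more cleanly, one could minimize a lexicographic potential on the sorted sequence $\bigl(d_T(v)-k_v\bigr)_{v\in V}$ and argue that at the lex-optimum the maximum excess is at most $2$. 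A completely different route would be a detachment-based construction: split each $v$ into $k_v$ copies of degree roughly $m$ using Lemma~\ref{lemma:splitting-of-vertices}, take a spanning tree in the detached graph, and project back to $G$, with an extra pruning step to extract the actual spanning tree while controlling degrees through the tree-degrees at the copies.
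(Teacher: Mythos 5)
This statement is cited verbatim from \cite{ElNaVo02} (Theorem~20 there); the paper under review does not prove it, so there is no in-paper proof to compare against. Evaluating your attempt on its own merits: the swap lemma is correct and cleanly argued. Removing a tree edge $vu$ gives a cut $(A_u,B_u)$ whose only tree edge is $vu$, the sets $B_u$ over tree-neighbours $u$ of $v$ partition $V\setminus\{v\}$, so $\sum_u e_u = d_G(v)\le mk_v$; if every $e_u\ge m$ this would force $\sum_u e_u\ge m\,d_T(v)>mk_v$, a contradiction, and any cut with $e_u\le m-1$ contains a non-tree edge avoiding $v$ that can be swapped in.

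The gap is the next step, and it is a real one. You observe (correctly) that $d_T(v)\ge k_v+3$ forces several candidate cuts, and then assert, ``by pigeonhole against the number of currently tight vertices,'' that some swap candidate $xy$ has both endpoints safely below the $+2$ threshold, so $\Phi$ strictly decreases. No such pigeonhole is available: there is no a priori bound on the number of tight vertices, and nothing prevents every candidate edge $x_uy_u$ from having an endpoint with $d_T-k$ equal to $1$ or $2$. In that situation each swap reduces the excess at $v$ by one but raises an excess elsewhere to the same level, so neither $\sum_v\max\{0,d_T(v)-k_v-2\}$ nor the lexicographically sorted excess sequence is guaranteed to improve; the lex argument you float has exactly the same difficulty (a swap trades a vertex at level $c^*$ for a vertex at level $c^*-1$ promoted to $c^*$). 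Results of this type (Ellingham--Nam--Voss, and the Fürer--Raghavachari family) do not follow from a single local swap with a one-shot potential; they require either a chain of improvement steps or a more global structural argument around the blocking cuts, and that is precisely the content you would need to supply. The detachment alternative you sketch is also incomplete: contracting the copies of a split vertex in a spanning tree of the detached graph can create cycles, so what you get back is only a connected spanning subgraph, and after pruning it to a tree the per-copy degree bounds (each copy having degree up to $2m-1$, with $\lfloor d_G(v)/m\rfloor$ copies) do not translate into $d_T(v)\le k_v+2$ without further control on how the pruning distributes degree across the copies.
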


\begin{corollary}\label{cor:razao}
   Let $m$ be a positive integer. If \(G\)
  is an \(m\)-edge-connected
  graph, then \(G\)
  contains a spanning tree \(T\)
  such that \(d_T(v) \leq 4\,d_G(v)/m\) for every vertex \(v\).
\end{corollary}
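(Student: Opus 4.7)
The plan is to deduce this corollary directly from Theorem~\ref{thm:elnavo02} via a short arithmetic comparison, using only that $m$-edge-connectivity forces a lower bound on the minimum degree of $G$.

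First, I would apply Theorem~\ref{thm:elnavo02} to the graph $G$ to obtain a spanning tree $T$ of $G$ satisfying
\[
  d_T(v) \leq \left\lceil \frac{d_G(v)}{m} \right\rceil + 2
\qquad \text{for every } v \in V(G).
\]
This tree $T$ will be the one claimed by the corollary; it remains to verify the bound $d_T(v) \le 4 d_G(v)/m$.

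Next, I would use the hypothesis that $G$ is $m$-edge-connected, which implies $d_G(v) \geq m$ for every vertex $v$; equivalently, $d_G(v)/m \geq 1$. Combining this with the inequality $\lceil x \rceil \leq x + 1$ yields
\[
  \left\lceil \frac{d_G(v)}{m} \right\rceil + 2
  \;\leq\; \frac{d_G(v)}{m} + 3
  \;\leq\; \frac{d_G(v)}{m} + 3\cdot\frac{d_G(v)}{m}
  \;=\; \frac{4\, d_G(v)}{m},
\]
where in the second inequality I use $3 \leq 3 \cdot d_G(v)/m$, which is exactly the statement $d_G(v) \geq m$. Chaining this with the bound from Theorem~\ref{thm:elnavo02} gives $d_T(v) \leq 4 d_G(v)/m$, as required.

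There is essentially no obstacle here: the corollary is a packaging of Theorem~\ref{thm:elnavo02} into a form convenient for later use (in particular, a bound homogeneous in $d_G(v)$, free of the additive $+2$ and of the ceiling). The only point worth emphasising is the reliance on $d_G(v) \geq m$, which is precisely where the hypothesis of $m$-edge-connectivity enters in converting the additive slack into a multiplicative one.
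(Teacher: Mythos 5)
Your proof is correct and follows exactly the same line as the paper's: apply Theorem~\ref{thm:elnavo02}, then use $d_G(v)\ge m$ (from $m$-edge-connectivity) together with $\lceil x\rceil\le x+1$ to absorb the additive constant into a multiplicative factor. Nothing to add.
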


\begin{proof}
   From the edge-connectivity of $G$, we have
  \(d_G(v) \geq m\)
  for every vertex \(v\).
  Combining this with Theorem~\ref{thm:elnavo02}, we conclude
    that $G$ contains a spanning tree~$T$ such that
    $d_T(v) \leq \lceil d_G(v)/m\rceil + 2 \leq (d_G(v)/m)+3 \leq
    4\,d_G(v)/m$.
\end{proof}

\begin{lemma}\label{lem:barat-part-3}
Let $k$, $m$ and $r$ be positive integers,  and let $G=(A,B;E)$ be a 
bipartite graph. If $G$ is $8m\lceil(k+r)/k\rceil$-edge-connected and, for 
every 
$v\in A$, \(d_G(v)\) is divisible by \(k+r\), then 
\(G\) 
admits a decomposition into spanning graphs \(G_k\) and \(G_r\) such that \(G_k\) is 
\hbox{\(m\)-edge-connected} and, for every vertex  $v\in 
A$, we have \(d_{G_k}(v) = \frac{k}{k+r}d_G(v)\) and \(d_{G_r}(v) = 
\frac{r}{k+r}d_G(v)\).
\end{lemma}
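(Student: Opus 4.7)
The plan is to construct $G_k$ as a spanning supergraph of a union $H=T_1\cup\cdots\cup T_m$ of $m$ edge-disjoint spanning trees of $G$ with controlled $A$-degrees. The $m$-edge-connectivity of $H$ will come for free from its containing $m$ edge-disjoint spanning trees, and $H$ is extended to $G_k$ by adjoining further edges incident to vertices of $A$ until each $v\in A$ attains the prescribed value $d_{G_k}(v)=\frac{k}{k+r}d_G(v)$.

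Set $s=\lceil(k+r)/k\rceil$, so that $G$ is $8ms$-edge-connected and therefore, by Theorem~\ref{thm:edge-disjoint-spanning-trees}, contains $4ms$ pairwise edge-disjoint spanning trees. I would extract $T_1,\ldots,T_m$ iteratively, arranging things so that at each step the residual graph $G_{i-1}:=G-E(T_1)-\cdots-E(T_{i-1})$ remains at least $(4ms-i+1)$-edge-connected; Theorem~\ref{thm:elnavo02} applied to $G_{i-1}$ then supplies a spanning tree $T_i$ satisfying, for $i\leq m$,
\[
d_{T_i}(v)\leq\Big\lceil\frac{d_{G_{i-1}}(v)}{4ms-i+1}\Big\rceil+2\leq\frac{d_G(v)}{3ms+1}+3.
\]
Summing over $i$ and using $d_G(v)\geq 8ms$ (which gives $3m\leq 3d_G(v)/(8s)$),
\[
\sum_{i=1}^m d_{T_i}(v)\leq\frac{d_G(v)}{3s}+3m\leq\Big(\frac{1}{3}+\frac{3}{8}\Big)\frac{d_G(v)}{s}=\frac{17\,d_G(v)}{24\,s}\leq\frac{k\,d_G(v)}{k+r},
\]
where the last inequality uses $1/s\leq k/(k+r)$.

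Letting $H=T_1\cup\cdots\cup T_m$, the graph $H$ is $m$-edge-connected and $d_H(v)\leq \frac{k}{k+r}d_G(v)$ for every $v\in A$. For each such $v$ I would pick $\delta(v):=\frac{k}{k+r}d_G(v)-d_H(v)\geq 0$ further edges at $v$ in $E(G)\setminus E(H)$ (there are $d_G(v)-d_H(v)\geq \delta(v)$ candidates) and adjoin them to $H$, obtaining $G_k$; then set $G_r:=E(G)\setminus E(G_k)$. By construction, $d_{G_k}(v)=\frac{k}{k+r}d_G(v)$ and $d_{G_r}(v)=d_G(v)-d_{G_k}(v)=\frac{r}{k+r}d_G(v)$ for every $v\in A$, and $G_k\supseteq H$ is $m$-edge-connected.

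The hard part is the edge-connectivity bookkeeping on $G_{i-1}$: a spanning tree produced by Theorem~\ref{thm:elnavo02} is not a priori one of the $4ms$ Nash--Williams/Tutte edge-disjoint spanning trees of $G$, so naively removing it can decrease the number of remaining edge-disjoint spanning trees by more than one. Resolving this requires either choosing $T_i$ from inside such a family while still forcing its $A$-degree to be small, or carrying out a one-shot construction that delivers all $m$ trees simultaneously with low total $A$-degree. The multiplicative constant $8$ in the hypothesis $8m\lceil(k+r)/k\rceil$ is precisely what provides the slack needed to absorb the constant-factor loss in this step.
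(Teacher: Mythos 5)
Your overall skeleton is the right one, but the iterative extraction has a genuine gap that you yourself flag and do not resolve: you assert that after removing $T_1,\ldots,T_{i-1}$ the residual graph $G_{i-1}$ is still $(4ms-i+1)$-edge-connected, and the whole degree estimate depends on it. This is false in general. Deleting a single spanning tree can drop the edge-connectivity by much more than one, since a spanning tree produced by Theorem~\ref{thm:elnavo02} is not one of the Nash--Williams/Tutte trees and may use many edges at a low-degree vertex; equivalently, it can hit many of the remaining edge-disjoint spanning trees. So the inductive invariant on $G_{i-1}$ is unproved, and with it the bound $d_{T_i}(v)\leq\lceil d_{G_{i-1}}(v)/(4ms-i+1)\rceil+2$. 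Everything downstream (the numeric chain to $\frac{17}{24s}d_G(v)$, the definition of $H$, the top-up of edges) is fine conditionally, but the conditional is the crux.

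The paper's proof is exactly the ``one-shot'' alternative you mention at the end. It takes the $4m\lceil(k+r)/k\rceil$ Nash--Williams/Tutte spanning trees, partitions them up front into $m$ groups $\mathcal T_1,\ldots,\mathcal T_m$ of size $4\lceil(k+r)/k\rceil$ each, and sets $G_i=\bigcup_{T\in\mathcal T_i}T$. Each $G_i$ is $4\lceil(k+r)/k\rceil$-edge-connected by construction (it contains that many edge-disjoint spanning trees), so Corollary~\ref{cor:razao} gives inside $G_i$ a spanning tree $T_i$ with $d_{T_i}(v)\leq d_{G_i}(v)/\lceil(k+r)/k\rceil\leq\frac{k}{k+r}d_{G_i}(v)$. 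Because the $G_i$ are pairwise edge-disjoint, $\sum_i d_{G_i}(v)\leq d_G(v)$, hence $\sum_i d_{T_i}(v)\leq\frac{k}{k+r}d_G(v)$, and no bookkeeping about residual edge-connectivity is ever needed. The final top-up of edges at vertices of $A$ to reach exactly $\frac{k}{k+r}d_G(v)$ is the same as yours. To repair your proof, replace the iterative extraction by this group-and-union step; everything else you wrote then goes through.
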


\begin{proof}
Let $k$, $m$, $r$ and $G=(A,B;E)$ be as in the hypothesis of the
lemma. Since \(G\)
is \(8m\lceil(k+r)/k\rceil\)-edge-connected,
by Theorem~\ref{thm:edge-disjoint-spanning-trees}, we conclude that
\(G\)
contains at least \(4m\lceil(k+r)/k\rceil\)
pairwise edge-disjoint spanning trees.  Now, partition the set of 
  these
\(4m\lceil(k+r)/k\rceil\)
spanning trees into \(m\)
sets, say \(\mathcal{T}_1,\ldots,\mathcal{T}_m\),
of $4\lceil(k+r)/k\rceil$ spanning trees each,  and let \(G_i =
\bigcup_{T\in \mathcal{T}_i} T\), for \(i = 1,\ldots,m\).

Clearly, \(G_i\)
is $4\lceil(k+r)/k\rceil$-edge-connected.  By
Corollary~\ref{cor:razao}, \(G_i\)
contains a spanning tree \(T_i\) such that, for every $v\in V(G_i)$,
\[
	d_{T_i}(v) \leq \frac{1}{\lceil(k+r)/k\rceil} d_{G_i}(v) \leq 
\left(\frac{k}{k+r}\right)d_{G_i}(v).
\]
Let \(G' = \cup_{i=1}^m T_i\). Clearly, \(G'\) is \(m\)-edge-connected. Note 
that, for every $v\in V(G)$,
\begin{align*}
	d_{G'}(v) = \sum_{i=1}^m d_{T_i}(v) \leq \left(\frac{k}{k+r}\right) \sum_{i=1}^m 
d_{G_i}(v) \leq \left(\frac{k}{k+r}\right)d_G(v).
\end{align*}

Let $G_k$ be the bipartite graph obtained from $G'$ by adding,
for every vertex $v$ in $A$, exactly $\big((k/(k+r)\big)d_G(v) - d_{G'}(v)$
edges of $G-E(G')$ that are incident to $v$ (note that
$\big((k/(k+r)\big)d_G(v)$ is an integer). Therefore, every vertex
$v\in A$ has degree exactly $\big((k/(k+r)\big)d_G(v)$ in $G_k$. To
conclude the proof, take \(G_r = G - E(G_k)\).
\end{proof}

%%%%%%%%%%%%%%%%%

\section{The disentangling lemma}\label{section:disentangling}

% \sout{In this section we prove a result, Lemma~\ref{lemma:disentangling},
% which help us disentangle the elements of a decomposition into special
%  types of trails in order to obtain a decomposition into paths.}

Our aim in this section is to prove a result,
  Lemma~\ref{lemma:disentangling}, which guarantees that, given a
  special trail decomposition of a graph~$G$, it is possible to switch
  edges of the elements of this decomposition and construct a path
  decomposition of~$G$. For that, we introduce the concept of
  \emph{trackings} of a trail: they are important to specify the order
  in which the vertices of a trail are visited.
  
We came to know recently that the technique introduced in this section generalizes the one presented by Kouider and Lonc~\cite{KoLo99} for decompositions of girth-restricted even regular graphs into paths.
Here, we manage to overcome this girth condition, by requiring a sufficiently high minimum degree.
%In a forthcoming paper, we apply this technique to other families of regular graphs, in order to 

%% versao do dia 17 julho %%%%%%%%%%%%
\subsection{Trails, trackings and augmenting sequences}

A \emph{trail} is a graph \(T\) for which there is a sequence
\(B=x_0\cdots x_\ell\) of its vertices (possibly with
  repetitions) such that \(E(T) = \{x_ix_{i+1}\colon 0\leq i\leq
\ell-1\}\); such a sequence is called a \emph{tracking} of \(T\), and
we say that \(T\) is the trail \emph{induced} by the tracking \(B\).
Note that a path admits only two possible trackings, while a cycle of
length \(\ell\) admits \(2\ell\) trackings.  The vertices \(x_0\) and
\(x_\ell\) are called \emph{end-vertices} of
\(B\).  

Given a tracking \(B=x_0\cdots x_\ell\), we denote by \(B^-\) the
tracking \(x_\ell \cdots x_0\), and, to ease notation, we denote
by \(V(B)\) and \(E(B)\) the sets \(\{x_0,\ldots,x_\ell\}\) of
vertices and \(\{x_ix_{i+1}\colon 0\leq i\leq \ell-1\}\) of edges of
\(B\), respectively. Moreover, we denote by \(\bar B\) the trail
\(\big(V(B),E(B)\big)\).

It will be convenient to say that a tracking \(B=x_0\cdots
  x_\ell\) \emph{traverses} the vertices $x_0,\ldots,x_\ell$ and the edges
  $x_0x_{1},\ldots,x_{\ell-1}x_{\ell}$ (in this order), and that
  $x_0x_1$ is the \emph{starting edge} of $B$
  and $x_{\ell-1}x_\ell$ is the \emph{ending edge} of $B$, or that $B$
  \emph{starts} with $x_0x_1$ and \emph{ends} with
  $x_{\ell-1}x_\ell$.
 
We say that a trail \(T\) is a \emph{vanilla trail}
if there is a tracking \(x_0x_1\cdots x_\ell\)
of \(T\) such that \(x_1\cdots x_{\ell-1}\) induces a path in
\(G\). A tracking that induces a vanilla trail is also called a
  \emph{vanilla tracking}.  (See Figure~\ref{fig:vanillas}.)

% If \(B = x_0x_1\cdots x_\ell\) is a tracking of a vanilla trail, then
% we say that $x_0x_1$ is the \emph{starting edge} of 
% $B$ and $x_{\ell-1}x_\ell$ is the 
% \emph{ending edge} of $B$, or that $B$ \emph{starts} 
% with $x_0x_1$ and \emph{ends} with $x_{\ell-1}x_\ell$.

If a vanilla trail contains $\ell$ edges, then we say
that it is a \emph{vanilla $\ell$-trail}.  A set \(\B\) of pairwise
edge-disjoint trackings of vanilla \(\ell\)-trails of a graph
  $G$ is an \emph{\(\ell\)-tracking decomposition} of \(G\) if
\(\bigcup_{B\in \B} E(B) = E(G)\), i.e, \(\{\bar B\colon B\in \B\}\)
is a decomposition of \(G\) into vanilla \(\ell\)-trails.  If every
element of \(\B\) induces an \(\ell\)-path, then we say that \(\B\) is
an \emph{\(\ell\)-path tracking decomposition}.  We may omit the
  length~$\ell$, when it is clear from the context.
We note that if \(B_i\) and \(B_j\)
are trackings of a tracking decomposition \(\B\)
such that \(E(B_i)\cap E(B_j) \neq \emptyset\),
then \(\bar B_i = \bar B_j\) (that is, \(B_i\) and \(B_j\) induce the same vanilla trail).

\begin{figure}[h]
	\begin{subfigure}[h]{0.30\textwidth}
		\centering
		\begin{tikzpicture}[scale = .4]

%%NODES

\foreach \i in {0,...,3}
    {
	\node (\i) [] at (180 +\i*60:3) {};
    };

	\node (a) [black vertex] at (0) {};
	\node (b) [white vertex] at (1) {};
	\node (c) [white vertex] at (2) {};
	\node (d) [black vertex] at (3) {};
	
	\draw[E edge][decorate,decoration={snake,segment length=7mm}] 
				(a) to [bend right=60] (b);	
	\draw[E edge][decorate,decoration={snake,segment length=7mm}] 
				(b) to [bend right=15] (c);
	\draw[E edge][decorate,decoration={snake,segment length=7mm}] 
				(c) to [bend right=60] (d);
	\draw[E edge] (a) -- (b);
	\draw[E edge] (d) -- (c);
		
\node (x) [] at (0,1.5) {};		
\node (z) [] at (0,-3.5) {};
\end{tikzpicture}
		\caption{}
		\label{fig:vanilla-a}
	\end{subfigure}
	\begin{subfigure}[h]{0.30\textwidth}
		\centering
		\begin{tikzpicture}[scale = .4]

%%NODES

\foreach \i in {0,...,3}
    {
	\node (\i) [] at (180 +\i*60:3) {};
    };

	\node (a) [black vertex] at (0) {};
	\node (b) [white vertex] at (1) {};
	\node (c) [white vertex] at (2) {};
	\node (d) [black vertex] at (3) {};
	
	\draw[E edge][decorate,decoration={snake,segment length=7mm}] 
				(a) to [bend right=60] (b);	
	\draw[E edge][decorate,decoration={snake,segment length=7mm}] 
				(b) to [bend right=15] (c);
	\draw[E edge][decorate,decoration={snake,segment length=7mm}] 
				(c) to [bend right=60] (d);
	\draw[E edge] (a) -- (c);
	\draw[E edge] (d) -- (b);
		
\node (x) [] at (0,1.5) {};		
\node (z) [] at (0,-3.5) {};
\end{tikzpicture}
		\caption{}
		\label{fig:vanilla-b}
	\end{subfigure}
	\begin{subfigure}[h]{0.30\textwidth}
		\centering
		\begin{tikzpicture}[scale = .4]

%%NODES

\foreach \i in {0,...,2}
    {
	\node (\i) [] at (180 +\i*90:3) {};
    };

	\node (a) [black vertex] at (0) {};
	\node (b) [white vertex] at (1) {};
	\node (c) [black vertex] at (2) {};
	
	\draw[E edge][decorate,decoration={snake,segment length=7mm}] 
				(a) to [bend right=40] (b);	
	\draw[E edge][decorate,decoration={snake,segment length=7mm}] 
				(b) to [bend right=40] (c);
	\draw[E edge] (a) -- (b);
	\draw[E edge] (c) -- (b);
		
\node (x) [] at (0,1.5) {};		
\node (z) [] at (0,-3.5) {};
\end{tikzpicture}
		\caption{}
		\label{fig:vanilla-c}
	\end{subfigure}

\vspace{.5cm}
	\begin{subfigure}[h]{0.30\textwidth}
		\centering
		\begin{tikzpicture}[scale = .4]

%%NODES

\foreach \i in {0,...,2}
    {
	\node (\i) [] at ($(180 +\i*90:3) - (0,.5)$) {};
    };
 	\node (4) [] at ($(90:2) - (0,1)$) {};   

	\node (a) [black vertex] at (0) {};
	\node (c) [black vertex] at (2) {};
	
	\node (d) [white vertex] at (4) {};
	
	\draw[E edge][decorate,decoration={snake,segment length=7mm}] 
				(a) to [bend right=40] (c);	
	\draw[E edge] (a) -- (d);
	\draw[E edge] (c) -- (d);

\node (x) [] at (0,1.5) {};		
\node (z) [] at (0,-3.5) {};
\end{tikzpicture}
		\caption{}
		\label{fig:vanilla-d}
	\end{subfigure}
	\begin{subfigure}[h]{0.30\textwidth}
		\centering
		\begin{tikzpicture}[scale = .4]

%%NODES

\foreach \i in {0,...,2}
    {
	\node (\i) [] at ($(180 +\i*90:3) - (0,.5)$) {};
    };
 	\node (4) [] at ($(90:2) - (0,1)$) {};   

	\node (a) [black vertex] at (0) {};
	\node (b) [white vertex] at (1) {};
	\node (c) [black vertex] at (2) {};
	
	\node (d) [white vertex] at (4) {};
	
	\draw[E edge][decorate,decoration={snake,segment length=7mm}] 
				(a) to [bend right=40] (b);	
	\draw[E edge][decorate,decoration={snake,segment length=7mm}] 
				(b) to [bend right=40] (c);
					
	\draw[E edge] (a) -- (b);
	\draw[E edge] (c) -- (d);

\node (x) [] at (0,1.5) {};		
\node (z) [] at (0,-3.5) {};
\end{tikzpicture}
		\caption{}
		\label{fig:vanilla-e}
	\end{subfigure}
	\begin{subfigure}[h]{0.30\textwidth}
		\centering
		\begin{tikzpicture}[scale = .4]

%%NODES

\foreach \i in {0,...,2}
    {
	\node (\i) [] at ($(180 +\i*90:3) - (0,.5)$) {};
    };
 	\node (4) [] at ($(90:2) - (0,1)$) {};   

%	\node (a) [black vertex] at (0) {};
	\node (b) [white vertex] at (1) {};
	\node (c) [white vertex] at (2) {};
	
	\node (d) [black vertex] at (4) {};
	
	\draw[E edge][decorate,decoration={snake,segment length=7mm}] 
				(d) to [bend right=80] (b);	
	\draw[E edge][decorate,decoration={snake,segment length=7mm}] 
				(b) to [bend right=40] (c);
					
	\draw[E edge] (d) -- (b);
	\draw[E edge] (c) -- (d);

\node (x) [] at (0,1.5) {};		
\node (z) [] at (0,-3.5) {};
\end{tikzpicture}
		\caption{}
		\label{fig:vanilla-f}
	\end{subfigure}	
	
\caption{Examples of vanilla trails.}
\label{fig:vanillas}
\end{figure}
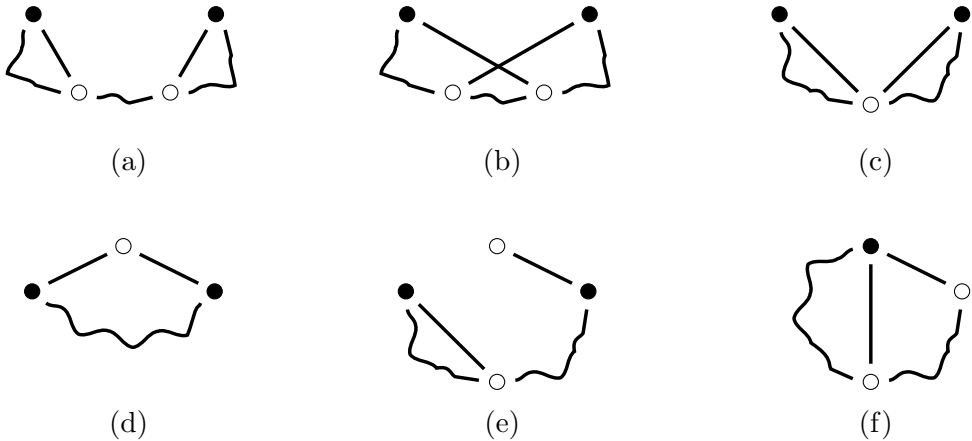

% Let \(\B\) be an \(\ell\)-tracking
% decomposition of a graph \(G\). Let \(S = B_1B_2\cdots B_r\)
% be a sequence of (not necessarily distinct) trackings of trails of
% $G$, where \(B_i=b_0^ib_1^i\cdots b_\ell^i\), 
% for $i=1\ldots,r$. \verm{We say that $S$ is a
% \emph{\(\mathcal{B}\)-sequence} if  \(B_i\in \mathcal{B}\) or \(B_i^-\in \mathcal{B}\), for \(i=1,\ldots,r\).}

% We remark that if \(B_i\) and \(B_j\)
% are trackings of a tracking decomposition \(\B\)
% such that \(E(B_i)\cap E(B_j) \neq \emptyset\),
% then \verm{\(\bar B_i = \bar B_j\).}

% %%%%%%%%%%%%%%%%%%%%%

The concept of augmenting sequence (Definition~\ref{def:augmenting})
is central in this section. Before presenting it, we give a motivation for it.
%{\color{red}We give a motivation for it, before stating it formally, to provide the reader
%with a more intuitive understanding.}

For every vanilla trail \(T\) of \(G\), let \(\tau(T)\)
be the number of end-vertices of \(T\) with degree greater than \(1\).  
Let \(\mathcal{D}\)
be a decomposition of $G$ into vanilla \(\ell\)-trails
that minimizes \(\tau(\D)=\sum_{T\in\mathcal{D}}\tau(T)\).
If \(\tau(\D) = 0\), then \(\D\)
is an \(\ell\)-path decomposition.
So, let us assume that \(\tau(\D) > 0\). Moreover, 
suppose that \(\mathcal{D}\) has the following property: for every \(T\) in \(\mathcal{D}\)
and every vertex \(v\) of \(T\), there is a trail \(T'\)
containing an edge \(vu\), such that \(u\notin V(T)\)
and \(u\) is an end-vertex of \(T'\). 

Since \(\tau(\D)>0\), there is a vanilla trail \(T_0\)
in \(\mathcal{D}\) that is not a path.  Let \(x\)
be an end-vertex of \(T_0\) of degree greater than \(1\),
and let \(C\) be a cycle in \(T_0\)
that contains \(x\). Consider a neighbour \(v\)
of \(x\) in \(C\), and let \(T_1\)
be an element of \(\mathcal{D}\)
that contains an edge \(vu\),
such that \(u\notin V(T_0)\) and \(u\)
is an end-vertex of \(T_1\), as supposed above. 
 Now, let \(T'_0 = T_0 - vx + vu\), \(T'_1 = T_1 -vu + vx\),
and put \(\mathcal{D}' = \mathcal{D} - T_0 -T_1 + T'_0+T'_1\).
We have \(\tau(T'_0) = \tau(T_0) - 1\).
If \(\tau(T'_1) \leq \tau(T_1)\), then \(\mathcal{D}'\)
is a decomposition of \(G\) into vanilla \(\ell\)-trails such that
\(\sum_{T\in\mathcal{D}'}\tau(T) < \sum_{T\in\mathcal{D}}\tau(T)\),
a contradiction to the minimality of \(\tau(\D)\).
Therefore, \(\tau(T'_1) = \tau(T_1) + 1\) and \(T'_1\)
contains a cycle \(C'\) that contains \(xv\).
Now, we consider a neighbour \(v'\) of \(x\) in \(C'\)
such that \(v' \neq v\), and we repeat this operation as long as 
necessary considering \(T'_1\) and \(v'\)
instead of \(T_0\) and~\(v\).

We show that, under some assumptions, after repeating this
  operation a finite number of times, we obtain a better
  decomposition (an $\ell$-tracking decomposition in which there
  are more trackings inducing paths than the previous one).  
  The next definition formalizes which
  properties the sequence of trails must satisfy to guarantee this
  improvement. 
%  Hereafter, whenever we refer to a better decomposition,
%  we mean an $\ell$-tracking decomposition of the graph in which there
%  are more trackings inducing paths than the previous decomposition.

To formalize the ideas mentioned before, let us introduce some
concepts. Let \(\B\) be an \(\ell\)-tracking
decomposition of a graph \(G\), and let \(S = B_1B_2\cdots B_r\)
be a sequence of (not necessarily distinct) trackings of trails of
$G$, where \(B_i=b_0^ib_1^i\cdots b_\ell^i\), 
for $i=1\ldots,r$. We say that $S$ is a
\emph{\(\mathcal{B}\)-sequence} if  \(B_i\in \mathcal{B}\) or \(B_i^-\in \mathcal{B}\), for \(i=1,\ldots,r\).

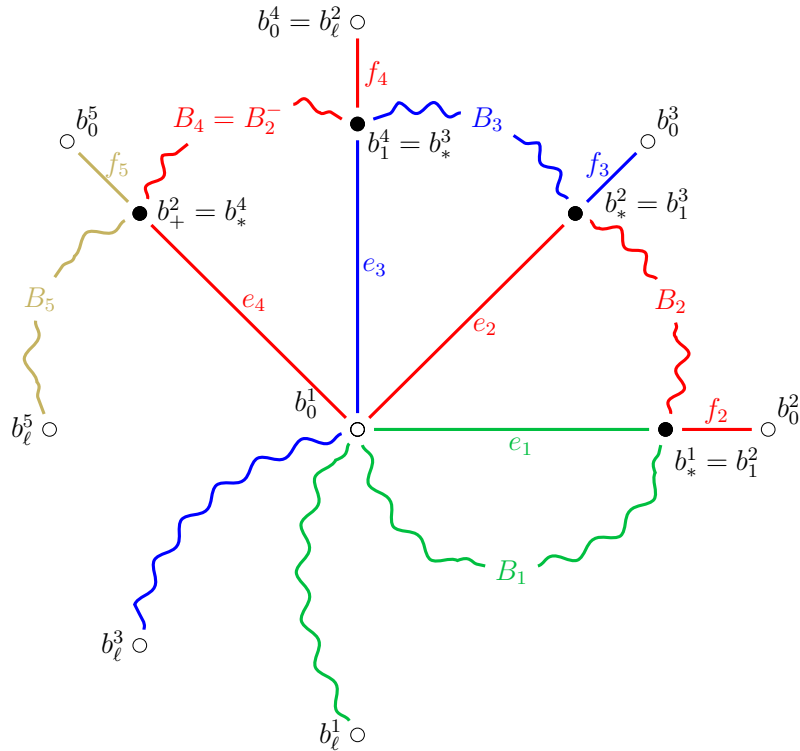
\begin{figure}[b] %% era h
	\centering
	  \scalebox{.9}{\begin{tikzpicture}[scale = 1.5]

%%NODES
\node (c) [] at (0,0) {};

\foreach \i in {0,...,7}
    {
	\node (o\i) [] at (\i*45-45:3) {};
    };

\foreach \i in {0,...,7}
    {
	\node (om\i) [] at (\i*45-22.5:3.3) {};
    };

\foreach \i in {1,...,4}
    {
	\node (oo\i) [] at (\i*45-45:4) {};
    };

%\foreach \i in {0,...,6}
%   {
%	\node (m\i) [] at (\i*45-22.5:3.5) {};
%	\node (lm\i) [] at (\i*45-22.5:3.75) {};
%   };
    
\foreach \i in {0,...,6}
    {
	\node (l\i) [] at (\i*45-6:1.6) {};
	\node (lf\i) [] at (\i*45+3:3.5) {};
	\node (lff\i) [] at (\i*45-3:3.5) {};	
    };

	\node (B1) [color=green!75!blue] at (1.5,-1.4) {\(B_1\)};
	\node (B2) [color=red] at (om1) {\(B_2\)};
	\node (B3) [color=blue] at (om2) {\(B_3\)};
	\node (B4) [color=red] at (om3) {\(B_4 = B_2^-\)};
	\node (B5) [color=yellow!75!blue] at (3.5*45:3.35) {\(B_5\)};

	\node (e1) [color=green!75!blue] at (l0) {\(e_1\)};
	\node (e2) [color=red] at (l1) {\(e_2\)};
	\node (e3) [color=blue] at (l2) {\(e_3\)};
	\node (e4) [color=red] at (l3) {\(e_4\)};

	\node (f2) [color=red] at (lf0) {\(f_2\)};
	\node (f3) [color=blue] at (lf1) {\(f_3\)};
	\node (f4) [color=red] at (lff2) {\(f_4\)};
	\node (f5) [color=yellow!75!blue] at (lff3) {\(f_5\)};

%Tout inicial x	
	\node (x0) [white vertex] at (o7) {};
	\node (v0) [left] at ($(x0) - (0.05,0)$) {\(b_\ell^1\)};
	\node (x1) [white vertex] at (c) {};
	\node (v1) [below] at ($(x1) + (-0.5,0.5)$) {\(b_0^1\)};
	\node (x4) [black vertex] at (o1) {};
	\node (v4) [below] at ($(x4) + (0.5,-0.08)$) {\(b_{*}^1 = b_1^2\)};
	
	\draw[E edge,color=green!75!blue][decorate,decoration={snake,segment length=7mm}] 
				(x0) to [bend left=30] (x1);
	\draw[E edge,color=green!75!blue][decorate,decoration={snake,segment length=7mm}]
				(x1) to [bend right=30] (B1); 
	\draw[E edge,color=green!75!blue][decorate,decoration={snake,segment length=7mm}]
				(B1) to [bend right=30] (x4);
	\draw[E edge][color=green!75!blue] (x4) -- (x1);

%P secundário y	
	\node (y0) [white vertex] at (oo1) {};
	\node (b01) at ($(y0) + (0.2,+0.2)$) {\(b_0^2\)};
	\node (y1) [black vertex] at (x4) {};
	\node (y3) [black vertex] at (o2) {};
	\node (b31) [above] at ($(y3) + (0.7,-.15)$) {\(b_{*}^2 = b_1^3\)};
	\node (y4) [white vertex] at (c) {};

	\node (y5) [black vertex] at (o4) {};
	\node (b51) [left] at ($(y5) + (1.15,0)$) {\(b_{+}^2 = b_{*}^4\)};
	\node (y6) [black vertex] at (o3) {};
%	\node (b51) [left] at ($(y6) - (0.05,0)$) {};
	\node (y7) [white vertex] at (oo3) {};
	\node (b51) [left] at ($(y7) - (0.05,0)$) {\(b_0^4 = b_\ell^2\)};

	\draw[E edge][color=red] (y0) -- (y1);
	\draw[E edge,color=red,decorate,decoration={snake,segment length=5mm}] 
				(y1) to [bend right=15] (B2);
	\draw[E edge,color=red,decorate,decoration={snake,segment length=5mm}] 
				(B2) to [bend right=15] (y3);	
	
	\draw[E edge][color=red] (y3) -- (y4);
	\draw[E edge][color=red] (y4) -- (y5);
	\draw[E edge,color=red,decorate,decoration={snake,segment length=5mm}] 
				(y5) to [bend left=15] (B4);
	\draw[E edge,color=red,decorate,decoration={snake,segment length=5mm}] 
				(B4) to [bend left=15] (y6);

	\draw[E edge][color=red] (y6) -- (y7);
	
%P terciário z	
	\node (z0) [white vertex] at (oo2) {};
	\node (b02) at ($(z0) + (0.2,0.2)$) {\(b_0^3\)};
	\node (z1) [black vertex] at (o2) {};
	\node (z3) [black vertex] at (o3) {};
	\node (b32) [right] at ($(z3) + (0,-0.2)$) {\(b_1^4=b_{*}^3\)};
	\node (z4) [white vertex] at (c) {};
	\node (z5) [white vertex] at (o6) {};
	\node (b52) [left] at ($(z5) - (0.05,0)$) {\(b_\ell^3\)};
	
	\draw[E edge][color=blue] (z0) -- (z1);
	\draw[E edge,color=blue,decorate,decoration={snake,segment length=5mm}] 
				(z1) to [bend right=15] (B3);
	\draw[E edge,color=blue,decorate,decoration={snake,segment length=5mm}] 
				(B3) to [bend right=15] (z3);	
	
	\draw[E edge][color=blue] (z3) -- (z4);
	\draw[E edge,color=blue,decorate,decoration={snake,segment length=7mm}] 
				(z4) to [bend right=30] (z5);

%Elemento que não passa no centro				
	\node (w0) [white vertex] at (oo4) {};
	\node (b02) at ($(w0) + (0.2,0.2)$) {\(b_0^5\)};
	\node (w1) [black vertex] at (o4) {};
	\node (w5) [white vertex] at (o5) {};
	\node (b52) [left] at ($(w5) - (0.05,0)$) {\(b_\ell^5\)};
	
	\draw[E edge][color=yellow!75!blue] (w0) -- (w1);
	\draw[E edge,color=yellow!75!blue,decorate,decoration={snake,segment length=7mm}] 
				(w1) to [bend right=15] (B5);
	\draw[E edge,color=yellow!75!blue,decorate,decoration={snake,segment length=7mm}] 
				(B5) to [bend right=15] (w5);
%	\draw[E edge][color=purple] (z3) -- (B3) -- (z4);
%	\draw[E edge,color=purple,decorate,decoration={snake,segment length=7mm}] 
%				(z4) to [bend right=30] (z5);		

\end{tikzpicture}} %%%%%%%%%% yw colocou o scale
	\caption{An augmenting sequence \(S =
          B_1B_2B_3B_4B_5\), where \(B_4=B_2^-.\)}
	\label{fig:augmenting}
\end{figure}

In what follows, we shall be interested in such
\(\mathcal{B}\)-sequences
\(S = B_1B_2\cdots B_r\),
in which the vertex $b_0^1$, the first vertex of $B_1$, plays an
important role. We require that each element
$B_i$ of~\(S\), except the last one (the tracking $B_r$),  contains the vertex $b_0^1$. We
denote by $s(i)$ \emph{the smallest positive index such that
  $b_{s(i)}^i = b_0^1$}. As we need to refer frequently to the vertex
$b_{s(i)-1}^i$ (the vertex of $B_i$ that is traversed before 
$b_0^1$) and the vertex $b_{s(i)+1}^i$ (the vertex of $B_i$ that is
traversed after  $b_0^1$), for ease of notation, we also denote
them by $b_*^i$ and $b_+^i$ (that is, $b_*^i := b_{s(i)-1}^i$ and
$b_+^i := b_{s(i)+1}^i$), respectively. 
In this context, we also give names to two special edges
of each $B_i$; these are $e_i := b_*^ib_0^1$  (the
edge traversed by $B_i$ ``to enter''~$b_0^1$), and $f_i := b_0^ib_1^i$ 
(the starting edge of  $B_i$). See Figure~\ref{fig:augmenting}.

% In what follows, we shall consider an $\ell$-tracking decomposition,
% say \(\mathcal{B}\), of a graph~$G$, and refer to a
% \(\mathcal{B}\)-sequence \(S = B_1B_2\cdots B_r\), where
% \(B_i=b_0^ib_1^i\cdots b_\ell^i\) for every $1\leq i\leq r$. In such a
% sequence, the vertex $b_0^1$, the first vertex of $B_1$ (which can be
% considered the ``center'' of the sequence), is very special. We will
% be interested in such a sequence $S$, in which all elements $B_i$,
% except the last one, $B_r$, contains the vertex $b_0^1$. We denote by
% $s(i)$ \emph{the smallest index such that $b_{s(i)}^i = b_0^1$}. As we
% need to refer frequently to the vertex $b_{s(i)-1}^i$ (the vertex of
% $B_i$ that is traversed before hitting $b_0^1$) and the vertex
% $b_{s(i)+1}^i$ (the vertex of $B_i$ that is traversed after hitting
% $b_0^1$), for ease of notation, we also denote them by $b_*^i$ and
% $b_+^i$ (that is, $b_*^i := b_{s(i)-1}^i$ and $b_+^i :=
% b_{s(i)+1}^i$). See figure~\ref{fig:augmenting} (the curved lines indicate a tracking
% with at least one edge).

% In this section, we shall also give names to some special edges
% incident to $b_*^i$. In this context, $e_i := b_*^ib_0^1$  is the
% edge traversed by $B_i$ ``to enter'' $b_0^1$, and $f_i := b_0^ib_1^i$ is 
% starting edge of  $B_i$.

% \begin{figure}[b] %% era h
% 	\centering
% 	\input{Figures/augmenting-sequence}
% 	\caption{Example of an augmenting sequence \(S =
%           B_1,B_2,B_3,B_4,B_5\), where \(B_4=B_2^-.\)}
% 	\label{fig:augmenting}
% \end{figure}

\newpage

% \begin{definition}\label{def:augmenting}
% Let \(\ell\) and $r\geq 2$ be positive integers 
% and let \(\B\) be an \(\ell\)-tracking decomposition of a graph \(G\).
% Let \(S = B_1B_2\cdots B_r\) be a \(\mathcal{B}\)-sequence,
% where \(B_i=b_0^ib_1^i\cdots b_\ell^i\) for every $1\leq i<j\leq r$. We say that 
% \(S\) is an \emph{augmenting sequence}
% of $\B$ if the following holds.
% \begin{itemize}
% 	\item[(i)] \(\bar B_1\) is not a path and $d_{\bar B_1}(b_0^1) > 1$;\\
% 	\hspace*{-0.8cm} For every \(1\leq i\leq r-1\) we have:
% 	\item[(ii)] $B_i$ contains $b_0^1$, and \(b_1^{i+1} = b_{s(i)-1}^i\);
% 	\item[(iii)] if \(i = 1\) or \(\bar B_{i} \neq \bar B_{i'}\)
% 	for every \(i'\) such that \(i' < i\), 
% 	then \(b_0^{i+1}\notin V(B_i)\);
% 	\item[(iv)] if \(i > 1\) and \(\bar B_i = \bar B_{1}\), then\\
% 	\hspace*{+0.5cm}\(b_0^{i+1}\notin 
% V(\bar {B}_1-b_{s(1)-1}^1b_0^1+b_{s(1)-1}^1b_0^2) = \big(V(B_i)\cup\{b^{2}_0\}\big)\);
% 	\item[(v)] if \(\bar B_{i} = \bar B_{i'}\) for some \(i'\) such that $1<i'<i$, 
% 	then \\
% 	\hspace*{+0.5cm}\(b_0^{i+1}\notin V(\bar B_i - b^{i'}_1b^{i'}_0 + 
% b^{i'}_1b^1_0 
% 	- b^{i'}_{s(i')-1}b^1_0 + b^{i'}_{s(i')-1}b^{i'+1}_0) 
% 	\subset \big(V(B_i)\cup\{b_0^{i'+1}\}\big)\).
% \end{itemize}
% If in addition \(b_0^1 \notin V(B_r)\), then we say that \(S\) is a 
% \emph{\full-augmenting sequence}. 
% \end{definition}

%%%%%%%%%%%%%%%%%%%%%%%%%%%%%%%%%%%%%%%%%%%%%%%

\begin{definition}\label{def:augmenting}
Let \(\ell\) and $r\geq 2$ be positive integers 
and let \(\B\) be an \(\ell\)-tracking decomposition of a graph \(G\).
Let \(S = B_1B_2\cdots B_r\) be a \(\mathcal{B}\)-sequence,
where \(B_i=b_0^ib_1^i\cdots b_\ell^i\) for $i=1\ldots,r$. We say that 
\(S\) is an \emph{augmenting sequence} of $\B$ if 
\begin{itemize}
	\item[(i1)] \(\bar B_1\) is not a path, and $d_{\bar B_1}(b_0^1) >
          1$, $b_0^2\notin V(B_1)$;
        \item[(i2)] \(b_1^r = b_*^{r-1}\);

	\hspace*{-1.9cm}and for $i=2,\ldots, r-1$, the following
        holds:

	\item[(ii)] $B_i$ contains $b_0^1$  and \(b_1^{i} = b_*^{i-1}\);
	\item[(iii)] if \(\bar B_{i} \neq \bar B_{h}\)
	for every \(h< i\), then \(b_0^{i+1}\notin V(B_i)\);
	\item[(iv)] if \(\bar B_i = \bar B_{1}\), then
          $b_0^{i+1}\notin V\big(\bar B_1 - e_1 + f_2\big) = V(\bar B_i) \cup \{b_0^2\}$. 
         
	\item[(v)] if \(\bar B_{i} = \bar B_{h}\) for some $1<h<i$, 
	then \(b_0^{i+1}\notin V(\bar B_h - f_h + e_{h-1} - e_h+ f_{h+1}\big)\).
\end{itemize}
If, in addition, \(b_0^1 \notin V(B_r)\), then we say that \(S\) is a 
\emph{\full-augmenting sequence}. 
\end{definition}

%%%%%%%%%%%%%%%%%%

% \begin{comment}
% If a \(\mathcal{B}\)-sequence $S=B_1B_2\cdots B_r$ is a (\full-)augmenting sequence 
% of a tracking decomposition \(\B\) of a graph \(G\), then 
% we say that \(\B\) 
% \emph{contains} a (\full-)augmenting sequence.
% Note that items (iii)-(v) concern the vertex \(b^{i+1}_0\), 
% requiring \(b^{i+1}_0\) not to be a vertex from an specific trail.
% \end{comment}

We note that whenever we delete edges of a trail (as in (iv) and (v)), we also
remove the isolated vertices that may result after the edge deletions.
We also observe that  (iv) implies that if $\bar B_i= \bar B_1$
  then $B_{i+1} \ne B_i$ (this will be used  later).

The main idea behind our central result is that, given a
  certain tracking decomposition, if we can find a \full-augmenting
  sequence, then we can find a better decomposition.  Thus, the
  conditions stated in Definition~\ref{def:augmenting} have the purpose
  of allowing interchanging of edges of the elements of a
  \full-augmenting sequence. Such an interchange will be performed
  starting from the first element $B_1$ and then going from $B_i$ to
  $B_{i+1}$. If the elements are all disctint, then the simple
  interchange we have mentioned in the motivation suffices, as long
  the items (i)--(iii) are satisfied. But, as the trail corresponding
  to some trackings may repeat, we need the conditions stated in (iv)
  and (v). Note that item (iv) requires that if
  \(\bar B_i = \bar B_{1}\),
  then the initial vertex of $B_{i+1}$ does not belong to the trail
  corresponding to the tracking to which $B_1$ was transformed 
  (that is, the trail $\bar B_1 - e_1 + f_2$). 
 Item (v) requires that if \(\bar B_{i} = \bar B_{h}\) for some
 $1<h<i$, then the initial vertex of $B_{i+1}$ does not belong to the
 trail corresponding to the tracking to which $B_h$ was transformed.
 In this case, since $1<h<i$, the original tracking $B_h$ has suffered
 two transformations. (Suppose $r>2$.)  $B_2$ suffers a first transformation (because of
 $B_1$), but then, the transformed $B_2$ plays the role of the
 original $B_1$, and so it is again transformed because of
 $B_3$. Thus, the condition stated in item (v) reflects this double
 transformation suffered by $B_h$. To understand this idea, consider
 the augmenting sequence shown in Figure~\ref{fig:augmenting}, where $B_4=B_2^-$ (that
 is, $\bar B_4 = \bar B_2$), and see the step-by-step transformations
 shown in Figure~\ref{fig:augmenting-steps}.

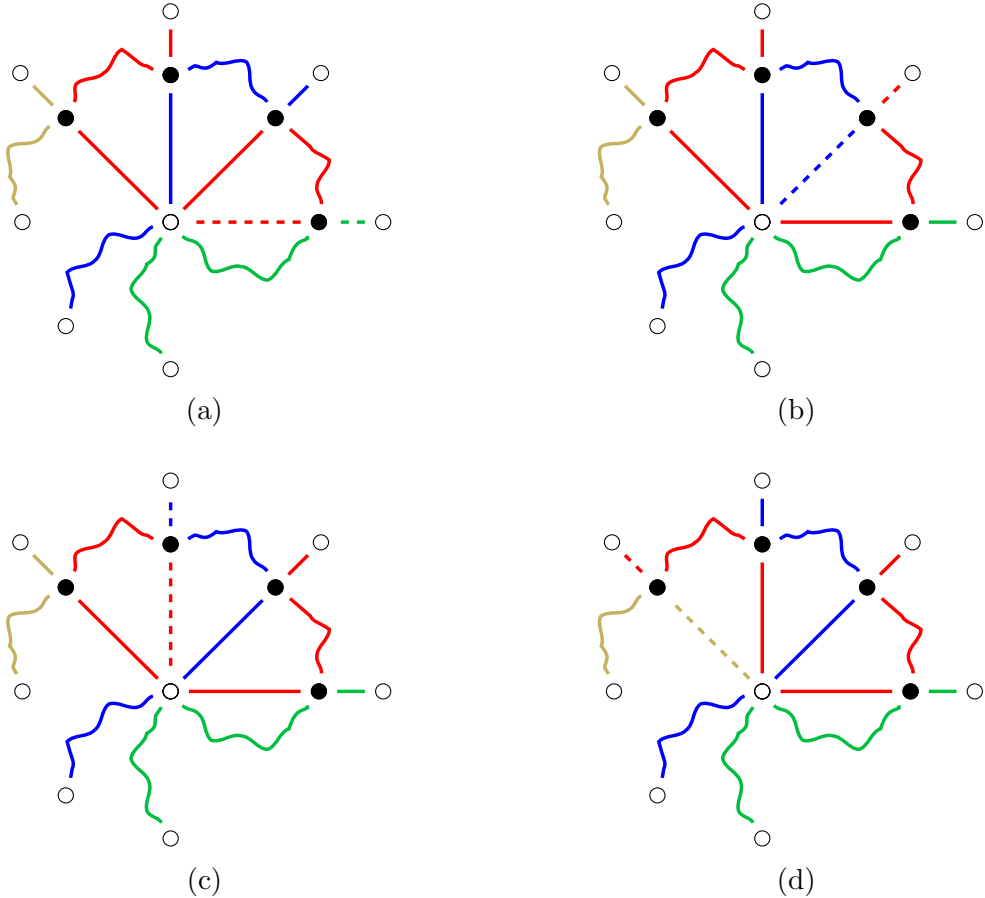
\begin{figure}[h]
	\centering
	\begin{subfigure}[h]{0.48\textwidth}
		\centering
		\begin{tikzpicture}[scale = .65]

%%NODES
\node (c) [] at (0,0) {};

\foreach \i in {0,...,7}
    {
	\node (o\i) [] at (\i*45-45:3) {};
    };
    
\foreach \i in {1,...,4}
    {
	\node (oo\i) [] at (\i*45-45:4.3) {};
    };

%\foreach \i in {0,...,6}
%   {
%	\node (m\i) [] at (\i*45-22.5:3.5) {};
%	\node (lm\i) [] at (\i*45-22.5:3.75) {};
%   };
    
\foreach \i in {0,...,6}
    {
	\node (l\i) [] at (\i*45:1.7) {};
    };

%	\node (B1) [color=green!75!blue] at (1.5,-1.4) {\(B_1\)};
%	\node (B2) [color=red] at (l1) {\(B_2\)};
%	\node (B3) [color=blue] at (l2) {\(B_3\)};
%	\node (B4) [color=red] at (l3) {\(B_4 = B_2^-\)};

%Tout inicial x	
	\node (x0) [white vertex] at (o7) {};
%	\node (v0) [left] at ($(x0) - (0.05,0)$) {\(b_\ell^1\)};
	\node (x1) [white vertex] at (c) {};
%	\node (v1) [below] at ($(x1) + (-0.5,0.5)$) {\(b_0^1\)};
	\node (x4) [black vertex] at (o1) {};
%	\node (v4) [below] at ($(x4) + (0.69,-0.08)$) {\(b_{s(1)-1}^1 = b_1^2\)};
	
	\draw[E edge,color=green!75!blue][decorate,decoration={snake,segment length=7mm}] 
				(x0) to [bend left=30] (x1);
	\draw[E edge,color=green!75!blue][decorate,decoration={snake,segment length=7mm}]
				(x1) to [bend right=50] (x4); 
%	\draw[E edge,color=green!75!blue][decorate,decoration={snake,segment length=7mm}]
%				(B1) to [bend right=30] (x4);
	\draw[E edge][color=red,dashed] (x4) -- (x1);

%P secundário y	
	\node (y0) [white vertex] at (oo1) {};
%	\node (b01) at ($(y0) + (0.2,+0.2)$) {\(b_0^2\)};
	\node (y1) [black vertex] at (x4) {};
	\node (y3) [black vertex] at (o2) {};
%	\node (b31) [above] at ($(y3) + (0.9,-.3)$) {\(b_{s(2)-1}^2 = b_1^3\)};
	\node (y4) [white vertex] at (c) {};

	\node (y5) [black vertex] at (o4) {};
%	\node (b51) [left] at ($(y5) - (0.05,0)$) {\(b_{s(2)+1}^2 = b_{s(4)-1}^2\)};
	\node (y6) [black vertex] at (o3) {};
%	\node (b51) [left] at ($(y6) - (0.05,0)$) {};
	\node (y7) [white vertex] at (oo3) {};
%	\node (b51) [left] at ($(y7) - (0.05,0)$) {\(b_0^4 = b_\ell^2\)};

	\draw[E edge][color=green!75!blue,dashed] (y0) -- (y1);
	\draw[E edge,color=red,decorate,decoration={snake,segment length=7mm}] 
				(y1) to [bend right=30] (y3);	
	\draw[E edge][color=red] (y3) -- (y4);
	\draw[E edge][color=red] (y4)  -- (y5);
	\draw[E edge,color=red,decorate,decoration={snake,segment length=7mm}] 
				(y5) to [bend left=40] (y6);
	\draw[E edge][color=red] (y6) -- (y7);
	
%P terciário z	
	\node (z0) [white vertex] at (oo2) {};
%	\node (b02) at ($(z0) + (0.2,0.2)$) {\(b_0^3\)};
	\node (z1) [black vertex] at (o2) {};
	\node (z3) [black vertex] at (o3) {};
%	\node (b32) [right] at ($(z3) + (0,-0.2)$) {\(b_1^4=b_{s(3)-1}^3\)};
	\node (z4) [white vertex] at (c) {};
	\node (z5) [white vertex] at (o6) {};
%	\node (b52) [left] at ($(z5) - (0.05,0)$) {\(b_\ell^3\)};
	
	\draw[E edge][color=blue] (z0) -- (z1);
	\draw[E edge,color=blue,decorate,decoration={snake,segment length=7mm}] 
				(z1) to [bend right=40] (z3);	
	\draw[E edge][color=blue] (z3)  -- (z4);
	\draw[E edge,color=blue,decorate,decoration={snake,segment length=7mm}] 
				(z4) to [bend right=30] (z5);

%Elemento que não passa no centro				
	\node (w0) [white vertex] at (oo4) {};
%	\node (b02) at ($(w0) + (0.2,0.2)$) {\(b_0^3\)};
	\node (w1) [black vertex] at (o4) {};
	\node (w5) [white vertex] at (o5) {};
%	\node (b52) [left] at ($(w5) - (0.05,0)$) {\(b_\ell^3\)};
	
	\draw[E edge][color=yellow!75!blue] (w0) -- (w1);
	\draw[E edge,color=yellow!75!blue,decorate,decoration={snake,segment length=7mm}] 
				(w1) to [bend right=40] (w5);	
%	\draw[E edge][color=purple] (z3) -- (B3) -- (z4);
%	\draw[E edge,color=purple,decorate,decoration={snake,segment length=7mm}] 
%				(z4) to [bend right=30] (z5);		

\end{tikzpicture}
		\caption{}
		\label{fig:augmenting-a}
	\end{subfigure}
	\begin{subfigure}[h]{0.48\textwidth}
		\centering
		\begin{tikzpicture}[scale = .65]

%%NODES
\node (c) [] at (0,0) {};

\foreach \i in {0,...,7}
    {
	\node (o\i) [] at (\i*45-45:3) {};
    };
    
\foreach \i in {1,...,4}
    {
	\node (oo\i) [] at (\i*45-45:4.3) {};
    };

%\foreach \i in {0,...,6}
%   {
%	\node (m\i) [] at (\i*45-22.5:3.5) {};
%	\node (lm\i) [] at (\i*45-22.5:3.75) {};
%   };
    
\foreach \i in {0,...,6}
    {
	\node (l\i) [] at (\i*45:1.7) {};
    };

%	\node (B1) [color=green!75!blue] at (1.5,-1.4) {\(B_1\)};
%	\node (B2) [color=red] at (l1) {\(B_2\)};
%	\node (B3) [color=blue] at (l2) {\(B_3\)};
%	\node (B4) [color=red] at (l3) {\(B_4 = B_2^-\)};

%Tout inicial x	
	\node (x0) [white vertex] at (o7) {};
%	\node (v0) [left] at ($(x0) - (0.05,0)$) {\(b_\ell^1\)};
	\node (x1) [white vertex] at (c) {};
%	\node (v1) [below] at ($(x1) + (-0.5,0.5)$) {\(b_0^1\)};
	\node (x4) [black vertex] at (o1) {};
%	\node (v4) [below] at ($(x4) + (0.69,-0.08)$) {\(b_{s(1)-1}^1 = b_1^2\)};
	
	\draw[E edge,color=green!75!blue][decorate,decoration={snake,segment length=7mm}] 
				(x0) to [bend left=30] (x1);
	\draw[E edge,color=green!75!blue][decorate,decoration={snake,segment length=7mm}]
				(x1) to [bend right=50] (x4); 
%	\draw[E edge,color=green!75!blue][decorate,decoration={snake,segment length=7mm}]
%				(B1) to [bend right=30] (x4);
	\draw[E edge][color=red] (x4) -- (x1);

%P secundário y	
	\node (y0) [white vertex] at (oo1) {};
%	\node (b01) at ($(y0) + (0.2,+0.2)$) {\(b_0^2\)};
	\node (y1) [black vertex] at (x4) {};
	\node (y3) [black vertex] at (o2) {};
%	\node (b31) [above] at ($(y3) + (0.9,-.3)$) {\(b_{s(2)-1}^2 = b_1^3\)};
	\node (y4) [white vertex] at (c) {};

	\node (y5) [black vertex] at (o4) {};
%	\node (b51) [left] at ($(y5) - (0.05,0)$) {\(b_{s(2)+1}^2 = b_{s(4)-1}^2\)};
	\node (y6) [black vertex] at (o3) {};
%	\node (b51) [left] at ($(y6) - (0.05,0)$) {};
	\node (y7) [white vertex] at (oo3) {};
%	\node (b51) [left] at ($(y7) - (0.05,0)$) {\(b_0^4 = b_\ell^2\)};

	\draw[E edge][color=green!75!blue] (y0) -- (y1);
	\draw[E edge,color=red,decorate,decoration={snake,segment length=7mm}] 
				(y1) to [bend right=30] (y3);	
	\draw[E edge][color=blue, dashed] (y3) -- (y4);
	\draw[E edge][color=red] (y4)  -- (y5);
	\draw[E edge,color=red,decorate,decoration={snake,segment length=7mm}] 
				(y5) to [bend left=40] (y6);
	\draw[E edge][color=red] (y6) -- (y7);
	
%P terciário z	
	\node (z0) [white vertex] at (oo2) {};
%	\node (b02) at ($(z0) + (0.2,0.2)$) {\(b_0^3\)};
	\node (z1) [black vertex] at (o2) {};
	\node (z3) [black vertex] at (o3) {};
%	\node (b32) [right] at ($(z3) + (0,-0.2)$) {\(b_1^4=b_{s(3)-1}^3\)};
	\node (z4) [white vertex] at (c) {};
	\node (z5) [white vertex] at (o6) {};
%	\node (b52) [left] at ($(z5) - (0.05,0)$) {\(b_\ell^3\)};
	
	\draw[E edge][color=red, dashed] (z0) -- (z1);
	\draw[E edge,color=blue,decorate,decoration={snake,segment length=7mm}] 
				(z1) to [bend right=40] (z3);	
	\draw[E edge][color=blue] (z3)  -- (z4);
	\draw[E edge,color=blue,decorate,decoration={snake,segment length=7mm}] 
				(z4) to [bend right=30] (z5);

%Elemento que não passa no centro				
	\node (w0) [white vertex] at (oo4) {};
%	\node (b02) at ($(w0) + (0.2,0.2)$) {\(b_0^3\)};
	\node (w1) [black vertex] at (o4) {};
	\node (w5) [white vertex] at (o5) {};
%	\node (b52) [left] at ($(w5) - (0.05,0)$) {\(b_\ell^3\)};
	
	\draw[E edge][color=yellow!75!blue] (w0) -- (w1);
	\draw[E edge,color=yellow!75!blue,decorate,decoration={snake,segment length=7mm}] 
				(w1) to [bend right=40] (w5);	
%	\draw[E edge][color=purple] (z3) -- (B3) -- (z4);
%	\draw[E edge,color=purple,decorate,decoration={snake,segment length=7mm}] 
%				(z4) to [bend right=30] (z5);		

\end{tikzpicture}
		\caption{}
	\end{subfigure}

\vspace{.5cm}	

	\begin{subfigure}[h]{0.48\textwidth}
		\centering
		\begin{tikzpicture}[scale = .65]

%%NODES
\node (c) [] at (0,0) {};

\foreach \i in {0,...,7}
    {
	\node (o\i) [] at (\i*45-45:3) {};
    };
    
\foreach \i in {1,...,4}
    {
	\node (oo\i) [] at (\i*45-45:4.3) {};
    };

%\foreach \i in {0,...,6}
%   {
%	\node (m\i) [] at (\i*45-22.5:3.5) {};
%	\node (lm\i) [] at (\i*45-22.5:3.75) {};
%   };
    
\foreach \i in {0,...,6}
    {
	\node (l\i) [] at (\i*45:1.7) {};
    };

%	\node (B1) [color=green!75!blue] at (1.5,-1.4) {\(B_1\)};
%	\node (B2) [color=red] at (l1) {\(B_2\)};
%	\node (B3) [color=blue] at (l2) {\(B_3\)};
%	\node (B4) [color=red] at (l3) {\(B_4 = B_2^-\)};

%Tout inicial x	
	\node (x0) [white vertex] at (o7) {};
%	\node (v0) [left] at ($(x0) - (0.05,0)$) {\(b_\ell^1\)};
	\node (x1) [white vertex] at (c) {};
%	\node (v1) [below] at ($(x1) + (-0.5,0.5)$) {\(b_0^1\)};
	\node (x4) [black vertex] at (o1) {};
%	\node (v4) [below] at ($(x4) + (0.69,-0.08)$) {\(b_{s(1)-1}^1 = b_1^2\)};
	
	\draw[E edge,color=green!75!blue][decorate,decoration={snake,segment length=7mm}] 
				(x0) to [bend left=30] (x1);
	\draw[E edge,color=green!75!blue][decorate,decoration={snake,segment length=7mm}]
				(x1) to [bend right=50] (x4); 
%	\draw[E edge,color=green!75!blue][decorate,decoration={snake,segment length=7mm}]
%				(B1) to [bend right=30] (x4);
	\draw[E edge][color=red] (x4) -- (x1);

%P secundário y	
	\node (y0) [white vertex] at (oo1) {};
%	\node (b01) at ($(y0) + (0.2,+0.2)$) {\(b_0^2\)};
	\node (y1) [black vertex] at (x4) {};
	\node (y3) [black vertex] at (o2) {};
%	\node (b31) [above] at ($(y3) + (0.9,-.3)$) {\(b_{s(2)-1}^2 = b_1^3\)};
	\node (y4) [white vertex] at (c) {};

	\node (y5) [black vertex] at (o4) {};
%	\node (b51) [left] at ($(y5) - (0.05,0)$) {\(b_{s(2)+1}^2 = b_{s(4)-1}^2\)};
	\node (y6) [black vertex] at (o3) {};
%	\node (b51) [left] at ($(y6) - (0.05,0)$) {};
	\node (y7) [white vertex] at (oo3) {};
%	\node (b51) [left] at ($(y7) - (0.05,0)$) {\(b_0^4 = b_\ell^2\)};

	\draw[E edge][color=green!75!blue] (y0) -- (y1);
	\draw[E edge,color=red,decorate,decoration={snake,segment length=7mm}] 
				(y1) to [bend right=30] (y3);	
	\draw[E edge][color=blue] (y3) -- (y4);
	\draw[E edge][color=red] (y4)  -- (y5);
	\draw[E edge,color=red,decorate,decoration={snake,segment length=7mm}] 
				(y5) to [bend left=40] (y6);
	\draw[E edge][color=blue,dashed] (y6) -- (y7);
	
%P terciário z	
	\node (z0) [white vertex] at (oo2) {};
%	\node (b02) at ($(z0) + (0.2,0.2)$) {\(b_0^3\)};
	\node (z1) [black vertex] at (o2) {};
	\node (z3) [black vertex] at (o3) {};
%	\node (b32) [right] at ($(z3) + (0,-0.2)$) {\(b_1^4=b_{s(3)-1}^3\)};
	\node (z4) [white vertex] at (c) {};
	\node (z5) [white vertex] at (o6) {};
%	\node (b52) [left] at ($(z5) - (0.05,0)$) {\(b_\ell^3\)};
	
	\draw[E edge][color=red] (z0) -- (z1);
	\draw[E edge,color=blue,decorate,decoration={snake,segment length=7mm}] 
				(z1) to [bend right=40] (z3);	
	\draw[E edge][color=red,dashed] (z3)  -- (z4);
	\draw[E edge,color=blue,decorate,decoration={snake,segment length=7mm}] 
				(z4) to [bend right=30] (z5);

%Elemento que não passa no centro				
	\node (w0) [white vertex] at (oo4) {};
%	\node (b02) at ($(w0) + (0.2,0.2)$) {\(b_0^3\)};
	\node (w1) [black vertex] at (o4) {};
	\node (w5) [white vertex] at (o5) {};
%	\node (b52) [left] at ($(w5) - (0.05,0)$) {\(b_\ell^3\)};
	
	\draw[E edge][color=yellow!75!blue] (w0) -- (w1);
	\draw[E edge,color=yellow!75!blue,decorate,decoration={snake,segment length=7mm}] 
				(w1) to [bend right=40] (w5);	
%	\draw[E edge][color=purple] (z3) -- (B3) -- (z4);
%	\draw[E edge,color=purple,decorate,decoration={snake,segment length=7mm}] 
%				(z4) to [bend right=30] (z5);		

\end{tikzpicture}
		\caption{}
	\end{subfigure}
	\begin{subfigure}[h]{0.48\textwidth}
		\centering
		\begin{tikzpicture}[scale = .65]

%%NODES
\node (c) [] at (0,0) {};

\foreach \i in {0,...,7}
    {
	\node (o\i) [] at (\i*45-45:3) {};
    };
    
\foreach \i in {1,...,4}
    {
	\node (oo\i) [] at (\i*45-45:4.3) {};
    };

%\foreach \i in {0,...,6}
%   {
%	\node (m\i) [] at (\i*45-22.5:3.5) {};
%	\node (lm\i) [] at (\i*45-22.5:3.75) {};
%   };
    
\foreach \i in {0,...,6}
    {
	\node (l\i) [] at (\i*45:1.7) {};
    };

%	\node (B1) [color=green!75!blue] at (1.5,-1.4) {\(B_1\)};
%	\node (B2) [color=red] at (l1) {\(B_2\)};
%	\node (B3) [color=blue] at (l2) {\(B_3\)};
%	\node (B4) [color=red] at (l3) {\(B_4 = B_2^-\)};

%Tout inicial x	
	\node (x0) [white vertex] at (o7) {};
%	\node (v0) [left] at ($(x0) - (0.05,0)$) {\(b_\ell^1\)};
	\node (x1) [white vertex] at (c) {};
%	\node (v1) [below] at ($(x1) + (-0.5,0.5)$) {\(b_0^1\)};
	\node (x4) [black vertex] at (o1) {};
%	\node (v4) [below] at ($(x4) + (0.69,-0.08)$) {\(b_{s(1)-1}^1 = b_1^2\)};
	
	\draw[E edge,color=green!75!blue][decorate,decoration={snake,segment length=7mm}] 
				(x0) to [bend left=30] (x1);
	\draw[E edge,color=green!75!blue][decorate,decoration={snake,segment length=7mm}]
				(x1) to [bend right=50] (x4); 
%	\draw[E edge,color=green!75!blue][decorate,decoration={snake,segment length=7mm}]
%				(B1) to [bend right=30] (x4);
	\draw[E edge][color=red] (x4) -- (x1);

%P secundário y	
	\node (y0) [white vertex] at (oo1) {};
%	\node (b01) at ($(y0) + (0.2,+0.2)$) {\(b_0^2\)};
	\node (y1) [black vertex] at (x4) {};
	\node (y3) [black vertex] at (o2) {};
%	\node (b31) [above] at ($(y3) + (0.9,-.3)$) {\(b_{s(2)-1}^2 = b_1^3\)};
	\node (y4) [white vertex] at (c) {};

	\node (y5) [black vertex] at (o4) {};
%	\node (b51) [left] at ($(y5) - (0.05,0)$) {\(b_{s(2)+1}^2 = b_{s(4)-1}^2\)};
	\node (y6) [black vertex] at (o3) {};
%	\node (b51) [left] at ($(y6) - (0.05,0)$) {};
	\node (y7) [white vertex] at (oo3) {};
%	\node (b51) [left] at ($(y7) - (0.05,0)$) {\(b_0^4 = b_\ell^2\)};

	\draw[E edge][color=green!75!blue] (y0) -- (y1);
	\draw[E edge,color=red,decorate,decoration={snake,segment length=7mm}] 
				(y1) to [bend right=30] (y3);	
	\draw[E edge][color=blue] (y3) -- (y4);
	\draw[E edge][color=yellow!75!blue,dashed] (y4)  -- (y5);
	\draw[E edge,color=red,decorate,decoration={snake,segment length=7mm}] 
				(y5) to [bend left=40] (y6);
	\draw[E edge][color=blue] (y6) -- (y7);
	
%P terciário z	
	\node (z0) [white vertex] at (oo2) {};
%	\node (b02) at ($(z0) + (0.2,0.2)$) {\(b_0^3\)};
	\node (z1) [black vertex] at (o2) {};
	\node (z3) [black vertex] at (o3) {};
%	\node (b32) [right] at ($(z3) + (0,-0.2)$) {\(b_1^4=b_{s(3)-1}^3\)};
	\node (z4) [white vertex] at (c) {};
	\node (z5) [white vertex] at (o6) {};
%	\node (b52) [left] at ($(z5) - (0.05,0)$) {\(b_\ell^3\)};
	
	\draw[E edge][color=red] (z0) -- (z1);
	\draw[E edge,color=blue,decorate,decoration={snake,segment length=7mm}] 
				(z1) to [bend right=40] (z3);	
	\draw[E edge][color=red] (z3)  -- (z4);
	\draw[E edge,color=blue,decorate,decoration={snake,segment length=7mm}] 
				(z4) to [bend right=30] (z5);

%Elemento que não passa no centro				
	\node (w0) [white vertex] at (oo4) {};
%	\node (b02) at ($(w0) + (0.2,0.2)$) {\(b_0^3\)};
	\node (w1) [black vertex] at (o4) {};
	\node (w5) [white vertex] at (o5) {};
%	\node (b52) [left] at ($(w5) - (0.05,0)$) {\(b_\ell^3\)};
	
	\draw[E edge][color=red,dashed] (w0) -- (w1);
	\draw[E edge,color=yellow!75!blue,decorate,decoration={snake,segment length=7mm}] 
				(w1) to [bend right=40] (w5);	
%	\draw[E edge][color=purple] (z3) -- (B3) -- (z4);
%	\draw[E edge,color=purple,decorate,decoration={snake,segment length=7mm}] 
%				(z4) to [bend right=30] (z5);		

\end{tikzpicture}
		\caption{}
	\end{subfigure}	
	
	% \caption{Step-by-step of the algorithm applied to 
	% 		the \full-augmenting sequence in+
        %                 Figure~\ref{fig:augmenting}.}
       \caption{Illustration of how to deal with the \full-augmenting sequence in
                        Figure~\ref{fig:augmenting}. In each step, the dashed edges are those 
              that are swichted.  }
      	\label{fig:augmenting-steps}

\end{figure}

%
% \verm{We observe that, $B_2\neq B_1$ (by (i)) and $B_3\neq B_2$ (by (iii)).
% If $\bar B_3= \bar B_1$ then $B_4\neq B_3$ (by (iv)); and if $\bar
% B_3\neq \bar B_1$ then$B_4\neq B_3$ (by (iii)). Note also that 
% (iv) implies that if \(\bar B_i = \bar B_{1}\), then $B_{i+1} \neq
% B_i$.}
As we will see,  full-augmenting sequences of a tracking
decomposition \(\B\) have a finite number of elements.  
To prove this
(Corollary~\ref{cor:augmenting-sequence-repeating-element}), 
we show first the following result.

% which is obtained from the following lemma, we prove that each trail \(T\)
% in \(\{\bar B\colon B\in\B\}\)
% is induced by at most two trackings \azul{of any augmenting sequence of~\(\B\).}

\begin{lemma}\label{lemma:non-repeating}
Let \(\ell\) and $r\geq 2$ be positive integers 
and let \(\B\) be an \(\ell\)-tracking decomposition of a graph \(G\).
If \(S = B_1B_2\cdots B_r\) is an augmenting sequence of $\B$, where 
\(B_i=b_0^ib_1^i\cdots b_\ell^i\) for 
$i=1,\ldots,r$, then \(b^i_* \neq b^j_*\) for every $i$, $j$ with \(1\leq i<j\leq r-1\).
\end{lemma}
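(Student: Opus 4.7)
The plan is to argue by contradiction. Suppose there exist indices $1 \leq i < j \leq r-1$ with $b_*^i = b_*^j$, and choose such $(i, j)$ with $j$ minimal. My first step is to show that $\bar B_i = \bar B_j$: the edges $e_i = b_*^i b_0^1$ and $e_j = b_*^j b_0^1$ share the same endpoints, so in the simple graph $G$ they coincide as a single edge; since the trails of $\B$ are pairwise edge-disjoint, this common edge lies in a single trail, forcing $\bar B_i = \bar B_j$. As a vanilla trail admits essentially two trackings, either $B_j = B_i$ or $B_j = B_i^-$.

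In the case $B_j = B_i$: if $i \geq 2$, then condition (ii) applied to both $B_i$ and $B_j$ yields $b_*^{i-1} = b_1^i = b_1^j = b_*^{j-1}$, producing a pair of the forbidden type with strictly smaller $j$, contradicting minimality. If $i = 1$, then $b_0^j = b_0^1$, and I would verify that $b_0^1$ lies in the ``forbidden set'' for $b_0^j$ prescribed by the augmenting-sequence condition imposed on it: (i1) gives $b_0^2 \notin V(B_1) \ni b_0^1$ when $j = 2$; for $j \geq 3$, in the subcase where $\bar B_{j-1}$ is new, (iii) excludes $b_0^1$ since $b_0^1 \in V(B_{j-1})$ by (ii); in the subcase $\bar B_{j-1} = \bar B_1$, (iv) excludes $b_0^1$ because $d_{\bar B_1}(b_0^1) > 1$ ensures $b_0^1$ survives the removal of $e_1$; and in the subcase $\bar B_{j-1} = \bar B_h$ for some $1 < h < j-1$, (v) excludes $b_0^1$ because the re-added edge $e_{h-1}$ reinstates $b_0^1$ even if removing $e_h$ had isolated it.

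In the case $B_j = B_i^-$, I would exploit the vanilla structure of $\bar B_i$. The same reasoning as above gives $b_0^i \neq b_0^1$ (for $i \geq 2$) and $b_\ell^i = b_0^j \neq b_0^1$; combined with the distinctness of interior vertices of $\bar B_i$, the vertex $b_0^1$ occurs in $B_i$ exactly at position $s(i)$, together with position $0$ when $i = 1$. A direct computation in the reversed tracking then gives $b_*^j = b_{s(i)+1}^i$, so the hypothesis $b_*^i = b_*^j$ becomes $b_{s(i)-1}^i = b_{s(i)+1}^i$. A case analysis on $s(i)$ closes the argument: $s(i) = 1$ forces a loop at $b_0^1$; $2 \leq s(i) \leq \ell-2$ contradicts distinctness of the interior path; and $s(i) = \ell-1$ forces $b_\ell^i = b_{\ell-2}^i$, so the two consecutive edges $b_{\ell-2}^i b_0^1$ and $b_0^1 b_\ell^i$ of $B_i$ coincide, contradicting that $B_i$ is a trail.

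I expect the main obstacle to be the careful case-checking in the case $B_j = B_i$ with $i = 1$, especially condition (v), since the vertex set $V(\bar B_h - f_h + e_{h-1} - e_h + f_{h+1})$ is built by successive removals and additions of edges under the convention that isolated vertices disappear, so one must trace through the operations to confirm that $b_0^1$ never actually leaves the set.
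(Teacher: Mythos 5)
Your proof is correct and follows the same high-level shape as the paper's: derive $\bar B_i = \bar B_j$ from $e_i = e_j$, split on $B_j = B_i$ versus $B_j = B_i^-$, and use minimality to reduce the former case to a smaller pair. The organization differs in a few places. The paper splits the argument into Claim A (the case $i=1$, with $j$ minimal) and Claim B ($i\geq 2$, with $i$ minimal), falling back to Claim A when $i=2$; you take a single pair with $j$ minimal, which unifies the reduction step. In the subcase $B_j = B_i$ with $i=1$, you argue directly that $b_0^j \neq b_0^1$ because $b_0^1$ always lies in the forbidden set prescribed by (i1), (iii), (iv), or (v) — checking that $b_0^1$ survives the edge removals in (iv) and (v) since $e_1$ and $e_{h-1}$ are incident to it. This is a clean observation that the paper leaves implicit; the paper instead shows $f_j = e_{j-1}$, hence $\bar B_{j-1} = \bar B_1 = \bar B_j$, and invokes item (iv). Conversely, in the subcase $B_j = B_i^-$ your three-way case analysis on $s(i)$ does more work than necessary: the paper simply asserts $b_+^i \neq b_*^i$, which holds at once because the two consecutive edges $b_*^i b_0^1$ and $b_0^1 b_+^i$ of the trail $\bar B_i$ must be distinct. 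One phrasing slip worth noting: ``a vanilla trail admits essentially two trackings'' is false for closed vanilla trails (a cycle of length $\ell$ has $2\ell$ trackings); the correct reason that $B_j \in \{B_i, B_i^-\}$, stated explicitly in the paper, is that $S$ is a $\B$-sequence and the edge-disjoint decomposition $\B$ contains at most one tracking of any given trail.
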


\begin{proof}
  Let \(\ell\), $r$, $\B$ and \(S = B_1B_2\cdots B_r\)
  be as in the hypothesis of the lemma.  We want to prove that
  \(\{b^1_{*},b^2_{*}, \ldots, b^{r-1}_{*}\}\) is a set of distinct elements.

\smallskip
	
\noindent\emph{Claim A:} $b^{j}_{*}\neq b^1_{*}$, for  $j=2,\ldots,r-1$.

For $j=2$ the result is immediate. Indeed, recall that
$e_i= b_{*}^ib_0^1 \in E(B_i)$.  If $b_{*}^2 = b_{*}^1$ then
$e_1=e_2$, that is, $\bar B_1$ and $\bar B_2$ have a common edge.  But
then, $\bar B_1 = \bar B_2$, a contradiction (to Definition~\ref{def:augmenting} (i1)). 
Now suppose $j\geq 3$. Take such a smallest index~$j$ for which
$b^{j}_{*} = b^1_{*}$. As in the previous case, we conclude that 
$\bar B_j = \bar B_1$. Since $S$ is a \(\B\)-sequence, 
either \(B_{j} = B_1^-\)  or $B_j = B_1$.
If \(B_{j} = B_1^-\), then $b_{*}^j = b_{+}^1$. But $b_{+}^1\neq
b_{*}^1$.  Thus, $b_{*}^j \neq b_{*}^1$, a contradiction. 
If \(B_{j} = B_1\), then  $b_0^j =b_0^1$  and
$f_j = f_1$.  But $b_1^j = b_{*}^{j-1}$ (by Definition~\ref{def:augmenting}
(ii)). Hence, $f_j = b_0^jb_1^j=b_0^1b_{*}^{j-1} = e_{j-1}$, that is, $f_j \in 
B_{j-1}$. Since $f_j=f_1 \in B_1$, we conclude that $\bar B_{j-1} =
\bar{B_1}$. Thus,  $\bar B_{j-1} = \bar{B_1}=\bar B_{j}$, that is, $\bar
B_{j-1} = \bar{B_j}$, a contradiction (see the observation after
Definition~\ref{def:augmenting}). 

\smallskip

\noindent\emph{Claim B:}  $b^{i}_{*}\neq b^{j}_{*}$ for every $i$, $j$
with $2\leq i< j\leq r-1$. 

Suppose that this does not hold. Let~$i$ be the
smallest integer such that there exists $j>i$ such that
$b^{i}_{*} = b^{j}_{*}$. 
% \sout{Since $b^1_0=b^{i}_{s(i)}=b^{j}_{s(j)}$, we know that $B_i$ and $B_j$
% contains the edge $b^1_0b^{i}_{*}$, from where we conclude that
% $\bar B_i = \bar B_j$. Since $S$ is a \(\B\)-sequence,
% $b^i_1=b^j_1$.} 
In this case, $e_i=b_{*}^ib_0^1 = b_{*}^jb_0^1 = e_j$,
and thus $\bar B_i = \bar B_j$. Hence, either $B_j = B_i$ or $B_j =
B_i^-$. If $B_j=B_i^-$, then $b_{*}^j = b_{+}^i \neq b_{*}^i$, a
contradiction. If  $B_j=B_i$, then $b_0^j = b_0^i$. Hence, 
$f_j = b_0^jb_1^j = b_0^ib_{*}^{j-1}$ and 
$f_i = b_0^jb_1^i = b_0^ib_{*}^{i-1}$. Since $f_j=f_i$, we
conclude that $b_{*}^{j-1} = b_{*}^{i-1}$, a contradiction to the
choice of $i$. 
% But, by Definition~\ref{def:augmenting}(ii),
% $b^i_1=b^{i-1}_{*}$ and $b^j_1=b^{j-1}_{*}$, which implies that
% $b^{i-1}_{*}=b^{j-1}_{*}$, contradicting the minimality of $i$.
\end{proof}

%%%%%%%%%%%%

\begin{corollary}\label{cor:augmenting-sequence-repeating-element}
  Let \(\ell\)
  and $r\geq 2$ be positive integers and let \(\B\)
  be an \(\ell\)-tracking
  decomposition of a graph \(G\).
  If \(S = B_1B_2\cdots B_r\)
  is an augmenting sequence of $\B$, then each \(B_i\)
  occurs at most once in \(S\).
  Furthermore, if $\bar B_i = \bar B_j$ for some pair $i$, $j$ with
  \(1\leq i<j \leq r-1\), then \(B_j = B_i^-\).
\end{corollary}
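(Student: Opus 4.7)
The plan is to derive this corollary from Lemma~\ref{lemma:non-repeating} together with the conditions (i1), (i2), and (iv) of Definition~\ref{def:augmenting}; once the first statement (that each \(B_i\) occurs at most once) is proved, the second will follow almost immediately from the structure of \(\B\)-sequences.

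To establish the first statement, I would assume for contradiction that \(B_i = B_j\) for some \(1 \le i < j \le r\) and distinguish three cases. If \(j \le r-1\), then \(b_0^1\) appears at a positive position of both trackings, so \(s(i)\), \(s(j)\), and hence \(b_*^i\), \(b_*^j\), are determined only by the common tracking and the fixed vertex \(b_0^1\); thus \(b_*^i = b_*^j\), contradicting Lemma~\ref{lemma:non-repeating}. If \(j = r\) and \(i \ge 2\), then combining (ii) applied to \(B_i\) with (i2) yields \(b_*^{i-1} = b_1^i = b_1^r = b_*^{r-1}\) with \(1 \le i-1 < r-1\), again contradicting Lemma~\ref{lemma:non-repeating}.

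The remaining case is \(i = 1\) and \(j = r\). When \(r = 2\), the equality \(B_1 = B_2\) forces \(b_0^2 = b_0^1 \in V(B_1)\), directly contradicting the requirement \(b_0^2 \notin V(B_1)\) from (i1). When \(r \ge 3\), condition (i2) gives \(b_1^r = b_*^{r-1}\), and \(B_1 = B_r\) gives \(b_1^1 = b_*^{r-1}\); hence the edges \(f_1 = b_0^1 b_1^1 \in E(\bar B_1)\) and \(e_{r-1} = b_*^{r-1} b_0^1 \in E(\bar B_{r-1})\) share both endpoints and therefore coincide, so by edge-disjointness of \(\B\) we have \(\bar B_{r-1} = \bar B_1\). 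Applying (iv) at the index \(r-1\) then gives \(b_0^r \notin V(\bar B_1) \cup \{b_0^2\}\), contradicting \(b_0^r = b_0^1 \in V(\bar B_1)\).

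For the second statement, if \(\bar B_i = \bar B_j\) with \(1 \le i < j \le r-1\), then edge-disjointness of \(\B\) implies that this common trail has at most one tracking \(B\) in \(\B\), so \(B_i, B_j \in \{B, B^-\}\) and therefore \(B_j \in \{B_i, B_i^-\}\); the first statement rules out \(B_j = B_i\), giving \(B_j = B_i^-\). The hard step is the case \(i = 1\), \(j = r\), \(r \ge 3\) above, where Lemma~\ref{lemma:non-repeating} does not cover the index \(r\), so the proof must first use (i2) to identify \(f_1\) with \(e_{r-1}\), reducing to the configuration \(\bar B_{r-1} = \bar B_1\) that is then excluded by the elaborate vertex list in condition (iv).
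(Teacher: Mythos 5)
Your proposal is correct. For the case $1 \le i < j \le r-1$ it takes essentially the same route as the paper (reduce $B_i = B_j$ to $b_*^i = b_*^j$ and contradict Lemma~\ref{lemma:non-repeating}), and your derivation of the ``furthermore'' clause from the first clause together with the $\B$-sequence property coincides with the paper's one-line argument. What you add is the careful treatment of the boundary case $j=r$, and this is worth noting: the paper's written proof opens with ``Suppose $B_i = B_j$ for some pair $i$, $j$ with $1\leq i<j\leq r-1$'' and never returns to $j=r$, even though the first conclusion (``each $B_i$ occurs at most once in $S$'') is about all $r$ positions and is invoked for the full range $1\le i<j\le r$ in the proof of Lemma~\ref{lemma:augmenting-sequence}. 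The paper's argument cannot simply be run at $j=r$, because the quantities $e_r$, $b_*^r$ need $b_0^1 \in V(B_r)$, which condition (ii) does not guarantee for the last index. Your workaround is exactly right: for $2 \le i < r$ you use (ii) and (i2) to transfer the collision to positions $i-1$ and $r-1$ and then reapply Lemma~\ref{lemma:non-repeating}; for $i=1$ you use (i1) when $r=2$, and for $r\ge 3$ you identify $f_1$ with $e_{r-1}$, deduce $\bar B_{r-1}=\bar B_1$ from edge-disjointness, and let condition (iv) at index $r-1$ supply the contradiction $b_0^r \notin V(\bar B_1)$. In short, your proposal is correct and is a more complete argument than the one in the paper; the $j=r$ cases you supply are genuinely needed.
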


\begin{proof}
Let \(\ell\), $r$, \(\B\) and \(S = B_1B_2\cdots B_r\)  be as in the
hypothesis of the corollary. Let $B_i=b^i_0 b^i_1\cdots b^i_\ell$, for $i=1,\ldots,r$.
% By Lemma~\ref{lemma:non-repeating}, \( \{b^1_{*},b^2_{*},b^3_{*}, \ldots, 
% b^{r-1}_{*}\}\) is a set of distinct elements.
Suppose, for a contradiction,  that \(B_i = B_j\) for some pair $i$,
$j$ with  \(1\leq i<j\leq r-1\). In this case, $e_i=e_j$, and
therefore, \(b^i_{*} = b^j_{*}\), a contradiction to Lemma~\ref{lemma:non-repeating}.
% Recall that \(s(i)\) and \(s(j)\)
% are the smallest positive indexes such that
% \(b^i_{s(i)} = b^j_{s(j)} = b^1_0\).
% Therefore, we have \(s(i) = s(j)\) and \(b^i_{*} = b^j_{*}\), a contradiction.  
Now, since \(S\) is a \(\B\)-sequence
and \(B_i\neq B_j\), if \(\bar B_i = \bar B_j\), then \(B_j = B_i^-\).
\end{proof}

Corollary~\ref{cor:augmenting-sequence-repeating-element} implies that any augmenting sequence of an $\ell$-tracking 
decomposition is finite. 
% Indeed, any subsequence \(S'\) of an augmenting sequence \(S\) is an augmenting sequence.
% If \(S\) is an infinite augmenting sequence, then we can take \(S'\) to be the subsequence
% of \(S\) with the first \(|\B| + 1\) elements.
% Therefore, at least one element in \(S'\) must occur at least twice, a contradiction

\subsection{Hanging edges and complete tracking decomposition}
% \azul{Given an element \(B\)
%   of a tracking decomposition \(\B\),
%   we denote by \(\tau(B)\)
%   the number of end-vertices of \(B\)
%   that have degree at least $2$ in \(\bar B\).
%   We observe that the same notation is used for trails (as the meaning
%   for both coincides).  We recall that any tracking has exactly two
%   end-vertices, even if they coincide.} 
%  % \sout{Clearly,
%  %  \(\tau(B) \geq 0\)
%  %  for every $B\in \B$, and \(\bar B\)
%  %  is a path if and only if \(\tau(B) = 0\).}
% Clearly, \(\tau(B) = 0\)
% if and only if \(\bar B\)
% is a path.  Furthermore, let \(\tau(\B) = \sum_{B\in\B}\tau(B)\).

%%%%%%%%%%%%%

All concepts defined in this subsection refers to a tracking
decomposition \(\B\)
of a graph $G$. We recall that any tracking in \(\B\)
has exactly two end-vertices, even if they coincide. For \(B\)
in \(\B\),
we denote by \(\tau(B)\)
the number of end-vertices of \(B\)
that have degree greater than~$1$ in \(\bar B\).
Thus, \(\tau(B) = 0\)
if and only if \(\bar B\)
is a path.  We observe that the same notation is used for trails (as
the meaning for both coincides).  Let
\(\tau(\B) = \sum_{B\in\B}\tau(B)\).
%Let \(\B\) be a tracking decomposition of a graph \(G\).

% Let \(uv\) be an edge of $G$, and let \(B\) be the element of~\(\B\) that contains \(uv\).
% If \(B = x_0x_1\cdots x_\ell\) 
% with either \(x_0 = u\) and \(x_1=v\), or \(x_\ell = u\) and \(x_{\ell-1} = v\), 
% then we say that \(uv\) is a  
% \emph{pre-hanging} edge at \(v\) in 
% \(\B\),
% and we denote by \({\prehang}(v,\B)\) the number of pre-hanging edges 
% at \(v\) in \(\B\). In particular, if \(d_{\bar B}(u) = 1\), we say that \(uv\) is a 
% \emph{hanging}
% edge at \(v\) in $\B$,
% and we denote by \({\hang}(v,\B)\) the number of hanging edges at \(v\) in 
% \(\B\).
% Note that, since every hanging edge at \(v\) in \(\B\)
% is a pre-hanging edge at \(v\) in \(\B\), \({\prehang}(v,\B) \geq 
% {\hang}(v,\B)\) for every \(v\) in \(V(G)\). 

Let \(uv\) be an edge of $G$, and let \(B\) be the element of~\(\B\) that contains \(uv\).
If \(B = x_0x_1\cdots x_\ell\) 
with either \(x_0 = u\) and \(x_1=v\), or \(x_\ell = u\) and \(x_{\ell-1} = v\), 
then we say that \(uv\) is a  
\emph{pre-hanging} edge at \(v\) in 
\(\B\). If, additionally,  \(d_{\bar B}(u) = 1\), then we say that \(uv\) is a 
\emph{hanging}
edge at \(v\) in $\B$. We denote by \({\prehang}(v,\B)\)
(resp. \({\hang}(v,\B)\)) the number of pre-hanging (resp. hanging) edges 
at \(v\) in \(\B\). 
% Clearly, \({\prehang}(v,\B) \geq 
%  {\hang}(v,\B)\) for every \(v\) in \(V(G)\). 
%
%
%
%
% and we denote by \({\hang}(v,\B)\) the number of hanging edges at \(v\) in 
% \(\B\).
% and we denote by \({\prehang}(v,\B)\) the number of pre-hanging edges 
% at \(v\) in \(\B\). In particular, if \(d_{\bar B}(u) = 1\), we say that \(uv\) is a 
% \emph{hanging}
% edge at \(v\) in $\B$,
% and we denote by \({\hang}(v,\B)\) the number of hanging edges at \(v\) in 
% \(\B\).
% Note that, since every hanging edge at \(v\) in \(\B\)
% is a pre-hanging edge at \(v\) in \(\B\), \({\hang}(v,\B)\) 
%
%
Let \(k\) be a positive integer.
We say that \(\B\) is \emph{\(k\)-pre-complete} 
if \({\prehang}(v,\B) > k\)
for every \(v\) in \(V(G)\). If \({\hang}(v,\B) > k\)
for every \(v\) in \(V(G)\), then we say that \(\B\) is \emph{\(k\)-complete.}

%Let \(\B\) be a tracking decomposition of a graph \(G\).

For $v$ in $V(G)$, let \(\B_{odd}(v)\) be the number of elements \(B\) of 
\(\B\) such that \(d_{\bar B}(v)\) is odd, and let \(\B_{even}(v)\) be the number of 
elements \(B=x_0\cdots x_{\ell}\) in~\(\mathcal{B}\) 
such that $x_0=x_\ell=v$. Furthermore, define \(\B(v) = \B_{odd}(v) + 2\B_{even}(v)\). 
One can see $\B(v)$ as the number of edges of \(G\) incident to~\(v\) that are 
starting edges of trackings in \(\mathcal{B}\) 
that start at \(v\), or ending edges of trackings in~$\mathcal{B}$ that end at $v$.
We note that if \(\B\) is an \(\ell\)-tracking decomposition of \(G\), 
then \(\sum_{v\in V(G)}\B(v) = 2|\B| = 
2|E(G)|/\ell\),
because each element of \(\B\) has exactly two end-vertices (counted with 
their multiplicities). The next lemma 
is the main tool in the proof of the Disentangling Lemma 
(Lemma~\ref{lemma:disentangling}).

% \begin{comment}
% {\color{magenta}We note that the definitions of pre-hanging edges
% and \(\B_{even}(v)\) \big(therefore, \(\B(v)\)\big) 
% strongly depend on the tracking decomposition chosen for \(G\),
% and cannot be calculated from a decomposition,
% i.e, if \(\B\) and \(\B'\) are two distinct tracking decompositions of a graph \(G\),
% we may have \(\B(v) \neq \B'(v)\) for some vertex \(v\) in \(V(G)\).}
% \end{comment}

\begin{lemma}\label{lemma:augmenting-sequence}
	Let \(k\) and \(\ell\) be positive integers
	and let \(\B\) be a \(k\)-complete \(\ell\)-tracking decomposition of a graph \(G\).
	If \(\B\) contains a \full-augmenting sequence, 
	then there is an \(\ell\)-tracking decomposition \(\B'\) of \(G\) 
	such that the following holds.
	\begin{itemize}
	 \item \(\tau(\B') < \tau(\B)\);
	 \item \(\B'(v) = \B(v)\) for every $v\in V(G)$;
	 \item $\B'$ is \(k\)-complete.
	\end{itemize}
\end{lemma}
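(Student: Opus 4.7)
The plan is to build $\B'$ from $\B$ by performing a chain of edge exchanges along the \full-augmenting sequence $S = B_1 B_2 \cdots B_r$, following the motivating idea preceding Definition~\ref{def:augmenting}. All trackings of $\B$ not appearing in $S$ are kept unchanged, while each $B_i$ is replaced by a new tracking $B_i'$. First, I would exchange the edges $e_1$ and $f_2$: the new $B_1'$ is $\bar B_1 - e_1 + f_2$, read as the vanilla tracking starting at $b_0^2$, going (in reverse) along the initial segment of $B_1$ up to $b_*^1$, and then along the remaining segment of $B_1$ to $b_\ell^1$; the new $B_2'$ is $\bar B_2 - f_2 + e_1$, which now contains a closed walk through $b_0^1$ and admits a vanilla tracking with $b_0^1$ as one of its end-vertices. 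I then iterate: for $i = 2, \ldots, r-1$, I exchange $e_i$ and $f_{i+1}$ between the already-modified $B_i'$ and $B_{i+1}$, passing the ``badness'' at $b_0^1$ down the chain. At the terminal step, the exchange of $e_{r-1}$ and $f_r$ between $B_{r-1}'$ and $B_r$ absorbs the badness, because $b_0^1 \notin V(B_r)$ forces the new end-vertex $b_0^1$ of $B_r'$ to have degree~$1$.

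Once this construction is in place, the three desired properties should follow from a local analysis. For $\tau(\B') < \tau(\B)$: the initial swap reduces $\tau(B_1)$ by $1$ (the high-degree end-vertex $b_0^1$ is replaced by the degree-$1$ end-vertex $b_0^2$) at the cost of increasing $\tau(B_2)$ by at most $1$; each intermediate swap is symmetric and preserves $\tau(B_i') + \tau(B_{i+1}')$; and the terminal swap satisfies $\tau(B_r') \leq \tau(B_r)$, so overall $\tau(\B') \leq \tau(\B) - 1$. The equality $\B'(v) = \B(v)$ holds because each exchange swaps the starting edges of two trackings at a common vertex of the form $b_*^i$, so that the multiset of starting/ending edges incident to each vertex is preserved. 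Finally, $k$-completeness of $\B'$ follows by bookkeeping which hanging edges are created or destroyed at each step: only a bounded number of vertices (those of the form $b_0^i$, $b_*^i$ or $b_\ell^i$) are affected, and each exchange essentially replaces one hanging edge at such a vertex by another one incident to the same vertex.

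I expect the main technical obstacle to be verifying that each intermediate $B_i'$ really is a vanilla $\ell$-trail, especially when trackings in $S$ repeat, i.e., when $\bar B_i = \bar B_h$ for some $h < i$. By Corollary~\ref{cor:augmenting-sequence-repeating-element}, such a repetition forces $B_i = B_h^-$, so the trail playing the role of $B_i'$ at step $i$ is no longer the original $\bar B_i$ but the version produced by the step-$h$ exchange, and a further exchange modifies it again. Conditions~(iv) and~(v) of Definition~\ref{def:augmenting} are engineered precisely to prevent the new starting vertex $b_0^{i+1}$ from colliding with vertices already present in this doubly-modified trail, and Lemma~\ref{lemma:non-repeating} controls how such repetitions can stack, reducing the verification to a finite case analysis of which edges of the original $B_h$ have been swapped in or out by the time step $i$ is reached.
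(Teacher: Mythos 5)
Your proposal is essentially the same as the paper's proof: both perform the identical chain of edge exchanges ($e_i \leftrightarrow f_{i+1}$) along the full-augmenting sequence, with the same reliance on conditions (iii)--(v) of Definition~\ref{def:augmenting} and Corollary~\ref{cor:augmenting-sequence-repeating-element} to control the case of repeated trails. The only difference is organizational: the paper packages the iteration as an induction on $|S|$, performing the first swap and then proving that the modified tail $C_2\cdots C_r$ is again a \full-augmenting sequence of the new decomposition, whereas you propose unrolling the same sequence of swaps directly.
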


\begin{proof}
Let \(k\), \(\ell\) and \(\B\) be as in the hypothesis of the lemma.
	 Suppose that \(S=B_1\cdots B_r\) is a \full-augmenting sequence  of
\(\B\), where \(S=B_1\cdots B_r\), and 
	\(B_i=b_0^ib_1^i\cdots b_\ell^i\) for $i=1,\ldots,r$.

	The proof is by induction on the number of elements of \(S\), denoted by $|S|$. 
	Note that by the definition of full-augmenting sequence, we have \(b^1_0\notin V(B_r)\).
	Therefore, \(|S| = r>1\).
	
	Suppose \(|S|= 2\).
        Since $S$ is a \full-augmenting sequence,
        \(b_0^1 \notin V(B_2)\)
        and, by item (ii) of Definition~\ref{def:augmenting},
        $b^2_1 = b^1_{*}$.  Let \(\bar B_1' = \bar B_1 - e_1 + f_2\)
        and \(\bar B_2' = \bar B_2 - f_2 + e_1\).
        That is, $\bar B_1'$ and $\bar B_2'$ are obtained from $\bar B_1$ and
        $\bar B_2$ by interchanging the edges $e_1$ and $f_2$. 
        Then we consider the following trackings corresponding to
        these trails: $B_1'= b_0^2X^-b_{+}^1 \cdots b_{\ell}^1$, where $X=b_0^1b_1^1 \cdots
        b_*^1$;  and $B_2'= b_0^1b_1^2b_2^2\cdots b_{\ell}^2$.
        It is easy to see that  $B_1'$ and $B_2'$ are $\ell$-trackings of $G$,  and
        furthermore, $\bar B_1\cup \bar B_2 = \bar B_1' \cup \bar
        B_2'$. 

        Let  \(\B' = \B - B_1 - B_2 + B_1' + B_2'\). Clearly, \(\B'\) is
        an $\ell$-tracking decomposition of $G$. 
	% Then, we can put 
	% $B_1'=b^2_0b_{s(1)-1}^1b_{s(1)-2}^1 \cdots 
	% b^1_1b^1_0b^1_{s(1)+1}\cdots b^1_\ell$ and 
	% \(B_2'=b_0^1b_1^2b_2^2\cdots b_\ell^2\),
	% where
	% \(\bar B_1' = \bar B_1 - b_0^1b^2_1 + b_0^2b_1^2\) and
	% \(\bar B_2' = \bar B_2 - b_0^2b_1^2 + b_0^1b_1^2\).
	By items (i1) and (iii) of Definition~\ref{def:augmenting},
        \(d_{\bar B_1}(b_0^1) > 1\)
        and \(b_0^2 \notin V(B_1)\),
        from where we conclude that \(d_{\bar B_1'}(b_0^2) = 1\)
        and \(\tau(B_1') \leq \tau(B_1)-1\).
        Since \(b_0^1 \notin V(B_2)\),
        we have \(d_{\bar B_2'}(b_0^1) = 1\). Thus, 
        \(\tau(B_2') \leq \tau(B_2)\), and therefore 
        the following inequality holds. 
	\begin{align*}
		\tau(\B') = \tau(\B) - \tau(B_1) - \tau(B_2) + \tau(B_1') + \tau(B_2')
			  < \tau(\B).
	\end{align*}
	
	It remains to prove (for $|S|=2$) that \(\B'(v) = \B(v)\)
        for every  \(v\in V(G)\), and that $\B'$ is $k$-complete.

\begin{claim}
	\(\B'(v) = \B(v)\) for every \(v\in V(G)\).
\end{claim}

\begin{proof}
Given $v\in V(G)$ and a set 
$\mathcal{T}\subset \B$, define $\B_{odd}|_\mathcal{T}(v)$ as the number of 
elements \(B\in\mathcal{T}\) such that 
\(d_{\bar B}(v)\) is odd, and define $\B_{even}|_\mathcal{T}$ as the number of 
elements \(B=x_0\cdots x_{\ell}\) of 
\(\mathcal{T}\) such that $x_0=x_\ell=v$. Furthermore, let
$\B|_\mathcal{T}(v)=\B_{odd}|_\mathcal{T}(v) + 
2\B_{even}|_\mathcal{T}(v)$.

% \sout{Let $B_{\vertices}=\{b_0^1, b_{\ell}^1,b_0^2,b_{\ell}^2\}$. Clearly, 
% $\B(v)=\B'(v)$ for every $v\notin 
% B_{\vertices}$. To prove that  $\B(v)=\B'(v)$ 
% holds also for vertices $v\in B_{\vertices}$, it is enough to show that 
% $\B|_{\{B_1,B_2\}}(v)=\B'|_{\{B_1',B_2'\}}(v)$, because we already know that 
% $\B|_{\B\setminus\{B_1,B_2\}}(v)=\B'|_{\B'\setminus\{B_1',B_2'\}}(v)$. 
% By definition, $\B|_{\{B_1,B_2\}}(v)$ \big($\B'|_{\{B_1',B_2'\}}(v)$\big) is the number 
% edges of \(G\)
% that are starting edges of elements in \(\{B_1,B_2\}\) \big(\(\{B'_1,B'_2\}\)\big) that 
% start at \(v\),
% or ending edges of elements in \(\{B_1,B_2\}\) \big(\(\{B'_1,B'_2\}\)\big) that end at 
% \(v\). 
% Since we know that $b^2_0\notin B_1$ and 
% $b^1_0\notin B_2$, the only possible intersection between vertices in 
% $B_{\vertices}$ are $b^1_\ell=b^1_0$ and 
% $b^2_\ell=b^2_0$. But, recalling that $B_1'=b^2_0b_{s(1)-1}^1b_{s(1)-2}^1 
% \cdots 
% b^1_1b^1_0b^1_{s(1)+1}\cdots b^1_{\ell-1} b^1_\ell$ and 
% \(B_2'=b_0^1b_1^2b_2^2\cdots b_\ell^2\), it is easy to check that 
% $\B|_{\{B_1,B_2\}}(v)=\B'|_{\{B_1',B_2'\}}(v)$ for every $v\in
% B_{\vertices}$.}

Let $B_{\vertices}=\{b_0^1,b_0^2,b_1^2\}$. Clearly, 
$\B(v)=\B'(v)$ for every $v\notin 
B_{\vertices}$. Let $\mathcal{T}=\{B_1,B_2\}$ and
$\mathcal{T'}=\{B_1',B_2'\}$ .
To prove that  $\B(v)=\B'(v)$ 
holds also for vertices $v\in B_{\vertices}$, it is enough to show that 
$\B|_{\mathcal{T}}(v)=\B'|_{\mathcal{T'}}(v)$, because we already know that 
$\B|_{\B\setminus\mathcal{T}}(v)=\B'|_{\B'\setminus\mathcal{T'}}(v)$. 
Recall that $\B|_{\mathcal{T}}(v)$
$\big($resp. $\B'|_{\mathcal{T'}}(v) \big)$ is the number 
edges of \(G\) that are starting edges of elements in \(\mathcal{T}\) $\big($resp. \(\mathcal{T'}\)$\big)$ that 
start at \(v\), or ending edges of elements in \(\mathcal{T}\) $\big($
resp. \(\mathcal{T'}\)$\big)$ that end at 
\(v\).  First, note that
$\B|_{\mathcal{T}}(b_0^1)=\B'|_{\mathcal{T'}}(b_0^1)$.  Indeed, the
edge $f_1=b_0^1b_1^1$ is the starting edge of $B_1$, but it is not a
starting edge of neither $B_1'$ or $B_2'$; but, on the other hand, the
starting edge of $B_2'$ starts at $b_0^1$. Thus, the number of
starting edges that starts at $b_0^1$ is the same in $\B$ and in
$\B'$. In terms of ending edges that end at $b_0^1$, the same happens:
if $e_1=b_*^1b_0^1$ is and ending edge of $B_1$, then the
ending edge of $B_1'$ (which is the reverse of $f_1$) also ends at
$b_0^1$, and if  $e_1$ is not an ending edge of $B_1$, then neither $B_1'$ or $B_2'$ has an ending edge incident to $b_0^1$. 
It is easy to see that
$\B|_{\mathcal{T}}(b_1^2)=\B'|_{\mathcal{T'}}(b_1^2)$, as $b_1^2$ is
an internal vertex of all trackings under analysis. Also,
$\B|_{\mathcal{T}}(b_0^2)=\B'|_{\mathcal{T'}}(b_0^2)$, as  the
starting edge $f_2= b_0^2b_1^2$ of $B_2$ becomes the starting edge of
$B_1'$, and no other change occurs in terms of ending edges at
$b_0^2$.
\end{proof}

%%%%%%%%%%%%%%%%%%%%%%%%% mexi ate aqui - yw 
	
\begin{claim}
	\(\B'\) is \(k\)-complete.
\end{claim}
 
\begin{proof}
Let us prove that \(\hang(v,\B')>k\) for every $v\in V(G)$.
Note that if \(v\neq b_1^2\), then the hanging edges at \(v\)
in \(\B'\) are the same hanging edges at \(v\) in \(\B\).
Let \(E_{\hang}\) be the set of hanging edges at \(b_1^2\) in \(\B\).
 Since $d_{\bar B_1}(b_0^1) > 1$ (by Definition~\ref{def:augmenting} (i1)), we know that 
\(e_1=b_0^1b_{*}^1=b_0^1b_1^2 \notin E_{\hang}\).
The set of hanging edges at \(b_1^2\) in \(\B'\) is \(E_{\hang}\cup 
\{b_0^1b_1^2, b_0^2b_1^2\}\) because 
\(d_{\bar B_1'}(b_0^2) = 1\) and \(d_{\bar B_2'}(b_0^1) = 1\).
Then, \(\hang(b_1^2,\B') >  \hang(b_1^2,\B)  > k \).
Therefore, \(\B'\) is \(k\)-complete.
\end{proof}

In the rest of the proof we assume that \(|S| = r> 2\). 
Suppose that the lemma holds when \(\B\) contains a \full-augmenting sequence \(S'\)
with length \(r-1\).
%Suppose that the statement holds for \(S'\) with length \(r-1\), i.e,
%if a $k$-complete \(\ell\)-tracking decomposition \(\B''\) of \(G\) contains 
%a \full-augmenting sequence of length \(r-1\),
%then \(G\) admits a $k$-complete \(\ell\)-tracking decomposition \(\B'\)
%such that 
%\(\tau(\B') < \tau(\B'')\) and \(\B'(v) = \B''(v)\) for every vertex \(v\) of 
%\(G\).
%In what follows, we construct a tracking decomposition \(\B''\) 
%such that \(S' = B_2'B_3\cdots B_r\) is a full-augmenting sequence, where \(B_2'\)
%can be obtained from \(B_2\) by interchanging some edges.

Since \(|S| > 2\),
by item (ii) of Definition~\ref{def:augmenting} we have
\(b_1^2 = b_{*}^1\)
and \(b_{s(2)}^2=b_0^1\)
where \(s(2) \geq 3\).
Now, consider the trackings $B_1'$ and $B_2'$ that we have
  defined in the proof for the case $|S|=2$. Let $\B''$ be the
  $\ell$-tracking decomposition as we have defined in that case (which we
  called $\B'$), that is, \(\B'' = \B - B_1 - B_2 + B_1' + B_2'\).
  In the case $|S|=2$, we had $b_0^1\notin V(B_2)$, but now we have
  that $b_0^1\in V(B_2)$, thus, in this case we can only conclude that
  \(\tau(\B'') \leq \tau(\B)\).  See Figure~\ref{fig:augmenting-a}.

% \sout{
% Let $B_1'=b^2_0b_{s(1)-1}^1b_{s(1)-2}^1 \cdots 
% b^1_1b^1_0b^1_{s(1)+1}\cdots b^1_\ell$ and 
% \(B_2'=b_0^1b_1^2b_2^2\cdots b_\ell^2\),
% where
% \(\bar B_1' = \bar B_1 - b_0^1b^2_1 + b_0^2b_1^2\) and
% \(\bar B_2' = \bar B_2 - b_0^2b_1^2 + b_0^1b^2_1\).
% By items (ii) and (iii) of Definition~\ref{def:augmenting}, \(d_{\bar B_1}(b_0^1) > 1\)
% and \(b_0^2 \notin V(B_1)\), from where we conclude that \(d_{\bar B_1'}(b_0^2) = 
% 1\) and \(\tau(B_1') \leq 
% \tau(B_1)-1\).
% Since \(b_0^1 \in V(B_2)\), we have \(\tau(B_2') \leq \tau(B_2)+1\).
% Let \(\B'' = \B - B_1 - B_2 + B_1' + B_2'\).
% Therefore, \(\tau(\B'') \leq \tau(\B)\).}

The next step is to prove that \(\B''(v) = \B(v)\) for all $v\in
V(G)$. The proof follows analogously to the proof we have presented 
for the case $|S|=2$. 

% \sout{Let $B_{\vertices}=\{b_0^1, b_{\ell}^1,b_0^2,b_{\ell}^2\}$. Again one
% needs to check that \(\B''(v) = \B(v)\)
% only for vertices in $B_{\vertices}$. Note that, since $|S|>2$, we can
% have $b^1_0=b^2_\ell$, differently from when $|S|=2$. If
% $b^1_0\neq b^2_\ell$, then one can prove that \(\B''(v) = \B(v)\)
% for all $v\in V(G)$ by using the same arguments that we use to prove
% that \(\B'(v) = \B(v)\)
% for all $v\in V(G)$ when $|S|=2$. Thus, assume that
% $b^1_0=b^2_\ell$. But again, recalling that
% $B_1'=b^2_0b_{s(1)-1}^1b_{s(1)-2}^1 \cdots
% b^1_1b^1_0b^1_{s(1)+1}\cdots b^1_{\ell-1} b^1_\ell$
% and \(B_2'=b_0^1b_1^2b_2^2\cdots b_\ell^2\),
% it is easy to check that
% $\B|_{\{B_1,B_2\}}(v)=\B''|_{\{B_1',B_2'\}}(v)$ for every
% $v\in B_{\vertices}$.}

Now we will prove that \(\B''\) is 
\(k\)-complete. Note that if \(v\neq b_1^2\), then the hanging edges at \(v\)
in \(\B\) are the same hanging edges at \(v\) in \(\B''\).
Now, let \(E_{\hang}(b_1^2)\) be the set of hanging edges at \(b_1^2\) in 
\(\B\).
Then the set of hanging edges at \(b_1^2\) in \(\B''\) is 
\(E_{\hang}(b_1^2)\cup \{b_1^2b_0^2\}\) because 
\(d_{\bar B_1'}(b_0^2) = 1\).
Then, \(\hang(b_1^2,\B'') \geq \hang(b_1^2,\B) > k\).
Therefore, \(\B''\) is \(k\)-complete.

% -----------Trecho da versão anterior:--------------

% Since $S$ is an augmenting sequence of $\B$, by 
% Corollary~\ref{cor:augmenting-sequence-repeating-element}, every $\bar B_i$ 
% appears at most twice in $S$ and if 
% $\bar B_i=\bar B_j$ for $1\leq i<j\leq r$, 
% then, \(B_j = B_i^-\). 
% Let \(S' = C_2C_3\cdots C_r\), where, for $2\leq i\leq r$, we have
% \[C_i = \begin{cases} 
% B_1'^- &\mbox{if } B_i= B_1^-; \\ 
% B_2' &\mbox{if } B_i= B_2; \\ 
% B_2'^- &\mbox{if } B_i= B_2^-; \\ 
% B_i & \mbox{otherwise}.\end{cases}\]

%--------- Trecho da versão nova: ------------------------

Since $S$ is an augmenting sequence of $\B$, by 
Corollary~\ref{cor:augmenting-sequence-repeating-element}, every $\bar B_i$ 
appears at most twice in $S$ and if 
$\bar B_i=\bar B_j$ for $1\leq i<j\leq r$, 
then, \(B_j = B_i^-\). 
Let \(S' = C_2C_3\cdots C_r\), where, $C_2= B_2'$ and for $3\leq i\leq r$, we have
\[C_i = \begin{cases} 
B_1'^- &\mbox{if } B_i= B_1^-; \\ 
% B_2' &\mbox{if } B_i= B_2; \\ 
B_2'^- &\mbox{if } B_i= B_2^-; \\ 
B_i & \mbox{otherwise}.\end{cases}\]

%%%%%-------------

%To illustrate the above definition, note that, for the sequence
%$S=B_1B_2\cdots B_5$ given in Figure~\ref{fig:augmenting}, we have
%$S'=C_2C_3\cdots C_5$, with $C_2=B_2'$, $C_3=B_3$, $C_4=B_2'$ and
%$C_5=B_5$. 

%By definition of $C_i$, every $\bar C_i$ appears at most twice in
%$S'$, and if \(\bar C_i = \bar C_j\),
%then \(C_i^- = C_j\).
We shall prove that $S'$ is a \full-augmenting sequence. For that, we
shall check each of the items of
Definition~\ref{def:augmenting}. Before, we make some observations: we
also denote by $s(i)$ the smallest index such that
$c_0^2 = c_{s(i)}^i=b_0^1$, for $2 \leq i \leq r$.  The vertex $c_j^i$
is the same as $b_j^i$ for $i=3,\ldots,r$ and
$j=0,\ldots,s(i)$.  We denote by $e_i^*$ and $f_i^*$ the edges
  of $C_i$ that correspond to $e_i$ and $f_i$ defined for $B_i$, that
  is, $e_i^*= c_*^ib_0^1$ and $f_i^*=c_0^ic_1^i$.\\

%%%%%%%%%%%%%%%%%%%

\noindent{\textbf{Item (i)}}: \(\bar C_2\) is not a path, \(d_{\bar
  C_2}(c^2_0) > 1\) and $c^3_0 \notin V(C_2)$. \\

Since $C_2=B_2'$, we have $c^2_0=b^1_0$. Moreover, since 
$\bar B_2'= \bar B_2 - f_2 +e_1$, the edges $e_1$ and $e_2$ are in
$\bar B_2'=C_2$ and are incident to $c_0^2$. Thus, \(d_{\bar C_2}(c^2_0) > 1\).
Now, let us prove
that $c^3_0\notin V(C_2)=V(B_2')$.  Since \(B_2 \neq B_1\),
by item (iii) of Definition~\ref{def:augmenting} (applied to \(S\)
with \(i=2\)),
we have \(b_0^3\notin V(B_2)\).
Since \(b_0^1\in V(B_2)\),
we know that \(V(B'_2)\subset V(B_2)\).
Therefore, \(b_0^3\notin V(B'_2)\).
By the construction of the elements $C_i$, we have \(c_0^3 = b_0^3\),
which implies that \(c_0^3\notin V(B'_2)=V(C_2)\).\\ 

%%%%%%%%%%%%%%%

\noindent{\textbf{Item (ii)}}: For $i=3,\ldots, r-1$, 
the element \(C_i\) contains \(c_0^2\), and \(c_1^{i} =c^{i-1}_{*}\). \\

Fix \(i\in \{3, \ldots, r\}\). Since $b^1_0\in B_i$, by the definition of $C_i$ 
we have that $c_0^2 = b^1_0\in C_i$.

We shall prove that \(c_1^{i} =c^{i-1}_{*}\). 
(a) If $C_i = B_i$ and $C_{i-1} = B_{i-1}$, then the result follows
by the definition of $C$ and the fact that item (ii) of
Definition~\ref{def:augmenting} holds for the sequence~ $S$.
(b) Suppose $C_i=B_2'^-$.  In this case, $B_i = B_2^-$, and thus
$b_1^i = b_{\ell -1}^2$.  Since $C_i=B_2'^-$, we have that
$c_1^i = b_{\ell-1}^2$.  Combining the equalities
above, we conclude that $c_1^i = b_1^i$.  (b1) If $C_{i-1} = B_{i-1}$
then $c_*^{i-1} = b_*^{i-1}$.  Thus,
$c_1^i =b_1^i = b_*^{i-1} = c_*^{i-1}$ (as the middle equality holds
because item (ii) of Definition~\ref{def:augmenting} holds for $S$).
(b2) If $C_{i-1} \neq B_{i-1}$, then $C_{i-1} = B_1'^-$. The last
equality implies that $c_*^{i-1} = b_{+}^1$ and $B_{i-1} = B_1^-$.
From the last equality, we obtain that $b_*^{i-1} = b_{+}^1$.
Combining the equalities, we get
$c_*^{i-1}= b_{+}^1 =b_*^{i-1} = b_1^i = c_1 ^i$.
 
(c) Suppose $C_i=B_1'^-$. The proof for this case is analogous to the
proof of case (b), interchanging the occurrences of index $2$ and index
$1$. We write the proof for completeness. In this case, $B_i = B_1^-$, and thus
$b_1^i = b_{\ell -1}^1$. Since $C_i=B_1'^-$, we have that
$c_1^i = b_{\ell-1}^1$.  Combining the equalities
above, we conclude that $c_1^i = b_1^i$. (c1) If $C_{i-1} = B_{i-1}$
then $c_*^{i-1} = b_*^{i-1}$.  Thus,
$c_1^i =b_1^i = b_*^{i-1} = c_*^{i-1}$. (c2) If $C_{i-1} \neq B_{i-1}$, then $C_{i-1} = B_2'^-$. The last
equality implies that $c_*^{i-1} = b_{+}^2$ and $B_{i-1} = B_2^-$.
From the last equality, we obtain that $b_*^{i-1} = b_{+}^2$.
Combining the equalities, we get
$c_*^{i-1}= b_{+}^2 =b_*^{i-1} = b_1^i = c_1 ^i$.

(d) Suppose $C_i=B_i$ and $C_{i-1}\neq B_{i-1}$. If $i=3$ then
$C_2=B_2'$, and in this case, $c_1^3=b_1^3 = b_*^2=c_*^2$. If $i>3$,
then $C_{i-1} = B_2'^-$ or $C_{i-1}=B_1'^-$. In both cases, it follows
that $c_*^{i-1} = b_* ^{i-1}$. Then using the fact that $b_1^i=
b_*^{i-1}$ (definition of $S$), it follows that $c_1^i=b_1^i=
b_*^{i-1}=c_*^{i-1}$. \\

\noindent{\textbf{Item (iii)}}: For $i=3,\ldots, r-1$,  if
\(\bar C_i \neq \bar C_h\) for every $h<i$,  then $c_0^{i+1} \notin V(C_i)$. \\

Fix $i\in\{3, \ldots, r-1\}$ and suppose \(\bar C_i \neq \bar C_{h}\) 
for every \(2\leq h < i\).
%  We shall prove that \(c_0^{i+1}\notin V(C_i)\). 
Note that $\bar C_i\neq \bar B_2'$.

First we consider the case where $\bar C_i=\bar B_1'$
or, equivalently, \(C_i = B_1'^-\).
Thus, \(B_i = B_1^-\),
and by item (iv) of Definition~\ref{def:augmenting} applied to \(S\), we have 
\(b_0^{i+1}\notin (V(\bar B_i)\cup\{b_0^2\}) =V(B_1')=V(C_i)\).
Since \(c_0^i = b_0^i\) for every \(i\geq 3\), we have \(c_0^{i+1}\notin 
V(C_i)\).

Now suppose that \(\bar C_i\neq \bar B_1'\).
Then, \(\bar B_i \neq \bar B_1\).
But we know that \(\bar C_i \neq \bar C_{h}\)
for every \( h < i\),
which implies that \(B_i \neq B_{h}\)
for every \(h <i\).
From item (iii) of Definition~\ref{def:augmenting} (applied to \(S\)),
we have that \(b_0^{i+1}\notin V(B_i)\).
Since \(c_0^{i+1}=b_0^{i+1}\),
we have \(c_0^{i+1}\notin V(B_i)\).
Since \(\bar C_i \neq \bar B_1', \bar B_2'\),
we conclude that \(C_i = B_i\), and therefore, \(c_0^{i+1}\notin V(C_i)\). \\

%%%%%%%%%%%%%%%%%%%%%%%% (iv) 

\noindent{\textbf{Item (iv)}}: for \(i=3,\ldots,r-1\) , if 
 \(\bar C_i = \bar C_2\) then \(c_0^{i+1}\notin V(C_i)\cup\{c_0^3\}\).\\

Fix \(i\in\{3,\ldots,r-1\}\) and suppose $\bar C_i= \bar C_2$. In this case, $C_i =B_2'^-$. 
We shall prove that 
$c_0^{i+1}\notin V(B_2')\cup\{c_0^3\}$. Note that, by the definition of 
\(C_i\), we have \(B_i = B_2^-\). Thus,
by item~(v) of Definition~\ref{def:augmenting} applied to \(S\)
with parameters \(i\) and \(h = 2\),
we have 
\begin{equation}\label{itemiv-a}
% b_0^{i+1} \notin V(\bar B_2-b_1^2b_0^2+b_1^2b_0^1-b_{s(2)-1}^2b_0^1+b_{s(2)-1}^2b_0^{3}).
b_0^{i+1} \notin V(\bar B_2- f_2 + e_1 - e_2 + f_3).
\end{equation}
Note that %\(\bar C_2 = \bar B_2' = \bar B_2-b_1^2b_0^2+b_1^2b_0^1\).
\(\bar C_2 = \bar B_2' = \bar B_2- f_2 + e_1\).
Therefore, 
\begin{align}\label{itemiv-b}
%V(\bar
%B_2-b_1^2b_0^2+b_1^2b_0^1-b_{s(2)-1}^2b_0^1+b_{s(2)-1}^2b_0^{3})
V(\bar B_2- f_2 + e_1 - e_2 + f_3)
	= V(\bar C_2- e_2 + f_3)%\nonumber\\
	= V(\bar C_2)\cup \{b_0^3\}.
\end{align}
Recall that \(c_0^{i+1} = b_0^{i+1}\) for every \(i\geq 2\). Then, 
by~\eqref{itemiv-a}~and~\eqref{itemiv-b}, 
we have \(c_0^{i+1} \notin V(C_2)\cup\{c_0^3\}\).
\\

%%%%%%%%%%%%%%%%%% (v) 

\noindent{\textbf{Item (v)}}:  
for \(i=3,\ldots,r-1\) , 
if \(\bar C_i = \bar C_{h}\) for some \(2<h <i\), 
%\(i=3,\ldots,r-1\) , 
%if \(\bar C_i = \bar C_{h}\) for some \(h\) such that \(2<h <i\), 
then % \(c_0^{i+1}\notin V(\bar C_h - c_1^{h}c_0^{h} + c_1^{h}c_0^2 - c_{s(h)-1}^{i'}c_0^2 + c_{s(h)-1}^{i'}c_0^{i'+1})\).
\(c_0^{i+1}\notin V(\bar C_i - f_h^*  + e_{h-1}^* - e_h^*+ f_{h+1}^*)\).\\

Fix $i\in\{3,\ldots,r-1\}$  and suppose that
\(\bar C_i = \bar C_{h}\) for some \(2<h<i\). 
%\(\bar C_i = \bar C_{h}\) for some \(h\) such that \(2<h<i\). 
Note that we have \(C_i = C_{h}^-\), and thus \(\bar B_i = \bar B_{h}\) and, 
by Corollary~\ref{cor:augmenting-sequence-repeating-element},
$\bar C_i\neq \bar B_1',\bar B_2'$.
By item~(v) of Definition~\ref{def:augmenting} applied to \(S\), 
we have
\begin{equation}\label{itemv-a}
%b_0^{i+1} \notin   V(\bar B_i-b_1^{i'}b_0^{i'}+b_1^{i'}b_0^1-b_{s({i'})-1}^{i'}b_0^1+b_{s({i'})-1}^{i'}b_0^{{i'+1}}).
b_0^{i+1} \notin  V(\bar B_h - f_h + e_{h-1}  - e_h + f_{h+1}). 
\end{equation}
% Recall that, since \(i\geq 3\), we have \(c_j^i=b_j^i\) for every \(0\leq j\leq 
% s(i)\). Therefore,  from~\eqref{itemv-a}, we have

Recall that, since \(i\geq 3\), we have $c_0^{i+1} = b_0^{i+1}$, $f_i^* = f_i$ and $e_i^* = e_i$. 
Therefore,  from~\eqref{itemv-a}, we have
\begin{equation*}
% c_0^{i+1}  \notin V(\bar C_i-c_1^{i'}c_0^{i'}+c_1^{i'}b_0^1-c_{s({i'})-1}^{i'}b_0^1+c_{s({i'})-1}^{i'}c_0^{{i'+1}}).
c_0^{i+1} \notin  V(\bar C_h - f_h^* + e_{h-1}^*   - e_h^* + f_{h+1}^*). 
\end{equation*}
% But, by definition of $C_2$, we know that $c^2_0=b^1_0$. Therefore, we obtain
% \begin{equation*}
% c_0^{i+1} \notin V(\bar C_i-c_1^{i'}c_0^{i'}+c_1^{i'}c_0^2-c_{s({i'})-1}^{i'}c_0^2+c_{s({i'})-1}^{i'}
% c_0^{{i'+1}}),
% \end{equation*}
% ending the proof of item (v).

We concluded the proof that $S'$ is an augmenting sequence of $\B''$. But, since $S$ is a 
\full-augmenting sequence of $\B$, we know 
that 
$b^1_0\notin V(B_r)$ (then, clearly $\bar B_r\neq \bar B_1,\bar B_2$). But since 
$c^2_0=b^1_0$, we conclude that $S'$ is a 
\full-augmenting sequence of $\B''$.

Since \(|S'| = r-1\), by the induction hypothesis, 
\(G\) admits a $k$-complete \(\ell\)-tracking decomposition \(\B'\)
such that \(\tau(\B') < \tau(\B'') 
\leq \tau(\B)\) and $\B'(v) = \B''(v) = \B(v)$
for every vertex $v$ of $G$.
\end{proof}

%Consider the following definition.
The following concept and lemma are important in the construction of \full-augmenting sequences.

\begin{definition}
Let \(\ell\) be a positive integer.
Let \(G\) be a graph and  \(\B\) be an \(\ell\)-tracking decomposition of \(G\).
We say that \(\B\) is \emph{feasible} if for every \(v\in V(G)\) the
following holds: if \(T\) is a 
vanilla $\ell$-trail of $G$ (not necessarily in \(\B\)) that contains \(v\) as an internal vertex, then 
there exists a hanging edge \(vw\) at \(v\) in $\B$ 
such that \(w\notin V(T)\).
\end{definition}

%The next lemma, which is used in the proof of Lemma~\ref{lemma:disentangling}, 
%is important in the construction of 
%(\full)-augmenting sequences
%in complete tracking decompositions.

\begin{lemma}\label{lemma:choose-element-all}
Let \(\ell\) and \(k\) be a positive integers and \(G\) be a bipartite graph. 
If \(k\geq \lceil (\ell +1)/2\rceil\) and 
\(\B\) is an \(k\)-complete \(\ell\)-tracking decomposition of \(G\),
then \(\B\) is feasible.
\end{lemma}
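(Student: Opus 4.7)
The plan is to use a simple pigeonhole argument exploiting the bipartiteness of $G$. Fix $v \in V(G)$, and let $T$ be any vanilla $\ell$-trail of $G$ that contains $v$ as an internal vertex. Let $A$ and $B$ be the color classes of $G$ and, without loss of generality, assume $v \in A$. I need to produce a hanging edge $vw$ in $\B$ with $w \notin V(T)$.

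First, I would observe that every hanging edge $vw$ at $v$ in $\B$ must satisfy $w \in B$, since $G$ is bipartite. Moreover, distinct hanging edges at $v$ in $\B$ correspond to distinct neighbours $w$ of $v$, because $G$ has no multiple edges. Hence the vertices at the other ends of the hanging edges at $v$ form a set $W \subseteq B$ with $|W| = \hang(v,\B) > k \geq \lceil (\ell+1)/2 \rceil$.

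Next, I would bound $|V(T) \cap B|$. Write a vanilla tracking of $T$ as $x_0 x_1 \cdots x_\ell$, so that $v = x_j$ for some $1 \leq j \leq \ell-1$. Since $G$ is bipartite and $T$ is a walk in $G$, the parity of $i$ alone determines whether $x_i$ lies in $A$ or in $B$: the vertex $x_i$ belongs to $B$ exactly when $i$ and $j$ have opposite parities. A short case analysis on the parities of $\ell$ and $j$ shows that the number of indices $i \in \{0,1,\ldots,\ell\}$ with opposite parity to $j$ is at most $\lceil (\ell+1)/2 \rceil$, and therefore
\[
|V(T) \cap B| \leq \lceil (\ell+1)/2 \rceil.
\]

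Finally, since $|W| > \lceil (\ell+1)/2 \rceil \geq |V(T) \cap B|$, by pigeonhole there is some $w \in W$ with $w \notin V(T) \cap B$. As $w \in B$ already, this yields $w \notin V(T)$, so the edge $vw$ is the required hanging edge. This establishes that $\B$ is feasible.

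The argument is essentially bookkeeping: the only delicate point is the case analysis establishing the bound $|V(T) \cap B| \leq \lceil(\ell+1)/2\rceil$, which I expect to be the main (mild) obstacle. I would handle it by splitting according to the parities of $\ell$ and $j$ and verifying that the maximum value, $\ell/2 + 1$ in the even case and $(\ell+1)/2$ in the odd case, matches $\lceil(\ell+1)/2\rceil$ in both instances.
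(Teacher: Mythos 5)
Your proof is correct and follows essentially the same approach as the paper's: fix $v$ and $T$, note that the $k+1$ distinct $B$-endpoints of hanging edges at $v$ cannot all lie in $V(T)\cap B$ since the latter has at most $\lceil(\ell+1)/2\rceil \leq k$ elements. The only difference is that you spell out the parity/alternation argument behind the bound $|V(T)\cap B| \leq \lceil(\ell+1)/2\rceil$, which the paper treats as immediate from $|V(T)|\leq\ell+1$ and bipartiteness.
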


\begin{proof}
  Let $\ell$, \(k\), $G$ and $\B$ be as in the hypothesis
  of the lemma. 
   Fix $v\in V(G)$ and suppose \(T\)
  is a vanilla $\ell$-trail of $G$ that contains \(v\).
  Since \(\B\)
  is \(k\)-complete,
  $\hang(v,\B)> k$.  Let
  \(vw_1,\ldots,vw_{k+1}\)
  be hanging edges at $v$ in $\B$.

	We claim that there exists an index \(1\leq i\leq k+1\) 
	such that \(w_i \notin V(T)\). 	Let \(W = \{w_1,\ldots,w_{k+1}\}\).
	Let $G=(A,B;E)$ and suppose, without loss of generality, that \(v\in A\).
	Since \(G\) is bipartite, \(W\subset B\). Furthermore, since \(T\) 
contains at most \(\ell + 1\) 
	vertices, \(|V(T)\cap B| \leq  \lceil (\ell+1)/2\rceil \leq k\). But since 
$|W|=k+1$, we 
	conclude that there exists a vertex $w\in W$ such that \(w\notin V(T)\).
\end{proof}

%We remark that Lemma~\ref{lemma:choose-element-all} can be easily extended to 
%non-bipartite graphs
%if we replace \(\lceil(\ell+1)/2\rceil\) by \(\ell\).
%Furthermore, one can also extend it  to multigraphs with maximum edge 
%multiplicity \(\lambda\)
%by replacing \(\lceil(\ell+1)/2\rceil\) by \(\lambda\ell\).
%
Recall that, for a tracking \(B\), 
we denote by \(\tau(B)\) the number of end-vertices 
of \(B\) that have degree greater than $1$,
and for a tracking decomposition \(\B\), 
we denote by \(\tau(\B)\) the sum \(\sum_{B\in \mathcal{B}}\tau(B)\).

\begin{lemma}\label{lemma:existence-of-sequence}
	Let \(\ell\) be a positive integer, \(G\) be a graph and 
	 \(\B\) be an \(\ell\)-tracking decomposition of $G$. If $\B$ is
         feasible and \(\tau(\B) > 0\),
	then \(\B\) contains a \full-augmenting sequence.
\end{lemma}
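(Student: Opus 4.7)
The plan is to construct a full-augmenting sequence $S = B_1 B_2 \cdots B_r$ of $\B$ greedily, extending one element at a time using feasibility.

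\textbf{Choosing $B_1$.} Since $\tau(\B) > 0$, there exists $B \in \B$ with $\tau(B) > 0$; I take $B_1 \in \{B, B^-\}$ so that the starting vertex $b_0^1$ satisfies $d_{\bar B_1}(b_0^1) > 1$. Then $\bar B_1$ is not a path (path end-vertices have degree~$1$), giving the first two requirements of item (i1) in Definition~\ref{def:augmenting}; the third requirement, $b_0^2 \notin V(B_1)$, will be secured when $B_2$ is chosen.

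\textbf{Inductive extension.} Assume $B_1,\ldots,B_i$ is a valid augmenting-sequence prefix with $b_0^1 \in V(B_i)$. Then $s(i) \geq 2$ (because $b_1^i = b_*^{i-1} \neq b_0^1$, the latter inequality being forced by absence of loops), so $v := b_*^i$ is well-defined. I identify which of clauses (iii), (iv), or (v) of Definition~\ref{def:augmenting} applies---namely, whether $\bar B_i$ is new, equals $\bar B_1$, or equals some $\bar B_h$ with $1 < h < i$---and let $F_i$ be the corresponding forbidden set. In each case $F_i$ is the vertex set of a concrete subgraph built from $\bar B_i$ (or $\bar B_h$) by the prescribed edge swap(s), and I argue that this subgraph is itself a vanilla $\ell$-trail $T_i$ of $G$ in which $v$ appears as an internal vertex. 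Feasibility applied to $T_i$ and $v$ then delivers a hanging edge $vw$ at $v$ in $\B$ with $w \notin V(T_i) \supseteq F_i$. Letting $B'$ be the element of $\B$ in which $vw$ is hanging, I set $B_{i+1} \in \{B', B'^-\}$ so that $b_0^{i+1} = w$, which automatically forces $b_1^{i+1} = v = b_*^i$. This realizes items (ii)--(v) at index $i$ and (i2) if $i+1$ turns out to be the terminal index. If $b_0^1 \notin V(B_{i+1})$, I declare $r = i+1$ and halt; otherwise the construction continues.

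\textbf{Termination.} Every prefix produced by this procedure is, by construction, an augmenting sequence, so Lemma~\ref{lemma:non-repeating} forces the vertices $b_*^1, b_*^2, \ldots$ encountered along the way to be pairwise distinct. Hence the construction executes at most $|V(G)|$ extensions and must stop; since feasibility never prevents an extension, stopping can only occur through the criterion $b_0^1 \notin V(B_r)$, so the output is a full-augmenting sequence.

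\textbf{Main obstacle.} The delicate point is the claim that the edge-swap subgraph $T_i$ is a vanilla $\ell$-trail with $v$ internal. This is transparent when $v$ is already an internal vertex of $\bar B_i$ (or $\bar B_h$). The trickier situation is when $v$ coincides with an end-vertex of the underlying trail; here one must check that the prescribed swap removes exactly the end-edge incident to $v$ and inserts a new edge so that $v$ is repositioned into the interior, while the sequence of middle vertices remains a path---precisely what makes $T_i$ vanilla. Once this case analysis is in place, the rest of the argument is routine bookkeeping within Definition~\ref{def:augmenting}.
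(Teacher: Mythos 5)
Your proposal is correct and follows the paper's approach closely: greedily extend the sequence by applying feasibility, with a three-way case split on whether $\bar B_i$ is new, equals $\bar B_1$, or repeats some earlier $\bar B_h$, and deduce termination from Lemma~\ref{lemma:non-repeating} (the paper phrases this contrapositively by taking a maximal augmenting sequence and extending it to a contradiction, but the content is identical). The worry you flag as the \textbf{Main obstacle} is already defused in the paper by a single prior observation: $b_*^i$ is always an internal vertex of $B_i$, since it sits at the end of $P_i = b_1^i\cdots b_*^i$ and is followed in $B_i$ by $b_0^1$; because the prescribed swaps in cases (b) and (c) never alter the segment $P_r$, the vertex $v = b_*^r$ remains internal in the swapped trail $T$, and one checks that $T$ is a vanilla $\ell$-trail simply by exhibiting its explicit tracking (for instance $b_\ell^h P_r\,b_0^1\,P_h\,b_0^{h+1}$ in case (c)), whose middle segment is merely a reordering of the middle vertices of $\bar B_h$, which already form a path.
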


\begin{proof}
Let \(\ell\), \(G\) and $\B$ be as in the hypothesis of the lemma. 	
	First, let us show that \(\B\) contains an augmenting sequence.
	Since \(\tau(\B) > 0\), 
	the tracking decomposition \(\B\) contains a tracking \(B_1\) 
	that does not induce a path.
%	, and at least one of the end-vertices of $B_1$ has degree at least~$2$ in $B_1$. 
Let $B_1 = b_0^1b_1^1\cdots b_\ell^1$, where $d_{\bar B_1}(b^1_0)>1$. 
%Remember that 
%$s(1)$ denotes the smallest positive 
%	index such that $b^1_{s(1)}=b^1_0$.
	
%	{\color{red}We claim that \(b_*^1\) is an internal vertex of \(B_1\).
%	Indeed, \(B_1\) contains a cycle that contains both \(b_0^1\) and \(b_*^1\).}
	Since \(\B\) is feasible, there is a hanging edge \(b_{*}^1w\) 
	at (the internal vertex) \(b_{*}^1\) in \(\B\) such that \(w\notin V(B_1)\).
	Let \(B_2\) be the element of \(\B\) that contains the edge \(b_{*}^1w\).
	Then, it is easy to verify that \(B_1B_2\) is an augmenting sequence of $\B$.
	
	Let \(S=B_1B_2\cdots B_r\) be a maximal augmenting sequence of $\B$.
	Suppose by contradiction that \(S\) is not a \full-augmenting sequence, i.e, \(b_0^1 \in V(B_r)\).
%	Therefore, there exists a minimum positive integer \(s(r)\)
%	such that \(b_{s(r)}^r = b_0^1 \).

	Now we show how to obtain an element \(B_{r+1}\) of $B$ such that 
$S'=B_1\cdots B_rB_{r+1}$ is an augmenting sequence, contradicting the maximality 
of $S$. 
Since \(S\) is an augmenting sequence, item (i1) of Definition~\ref{def:augmenting} holds,
and items (ii)--(v) of Definition~\ref{def:augmenting} hold for \(i=1,\ldots,r-1\).
Since \(S\) is not a \full-augmenting sequence, \(B_r\) contains \(b_0^1\).
Our aim is to find an element \(B_{r+1}\) for which
items (iii)--(v) of Definition~\ref{def:augmenting} hold for \(i=r\).
Before continuing, note that \(b_*^i\) is a vertex of \(B_i\)
in the tracking \(P_i = b_1^ib_2^i\cdots b_{*}^i\), and
therefore \(b_*^i\) is always an internal vertex of \(B_i\) 
(because \(B_i\) contains the tracking \(P_i b_0^1\)).
Now, note that exactly one of the following holds: 
(a)~\(\bar B_r \neq \bar B_h\), for every \(h<r\);
(b)~\(\bar B_r = \bar B_1\); or
(c)~\(\bar B_r = \bar B_h\) for some \(1 < h < r\).

\begin{itemize}
\item[(a)] %\textit{\({\bar B_r \neq \bar B_{h}}\), for every \(h<r\).}
%	, then we must choose \(B_{r+1}\) such that\(b_0^{r+1}\notin V(B_r)\).}\\
%	Suppose \(\bar B_r \neq \bar B_{i'}\) for every \(i'<r\).
	In this case, by the feasibility of \(\B\), considering \(T=\bar B_r\)
	and \(v=b_{*}^r\), 
	there exists a hanging edge \(b_{*}^rz\) at \(b_{*}^r\) such that \(z\notin V(B_r)\).
	Let \(B_{r+1}\) be the element of \(\B\) containing \(b_{*}^rz\).
	We can suppose without loss of generality that \(z=b_0^{r+1}\)
	(otherwise, \(z = b_\ell^{r+1}\) and we choose \(B_{r+1}^-\) instead of \(B_{r+1}\)).
	Then, \(b_1^{r+1} = b_*^{r}\), \(b_0^{r+1} = z \notin V(B_r)\), and 
	item (iii) of Definition~\ref{def:augmenting} holds for \(i=r\).

\item[(b)]  
%	\textit{If \(\bar B_r = \bar B_1\), 
%	then \(b_0^{r+1}\notin V(B_i)\cup \{b_0^2\} = V(B_1-b^1_{s(1)-1}b_0^1 + b_{s(1)-1)}^0b_0^2)\).}\\
%	Suppose \(\bar B_r = \bar B_1\).
	In this case, we have \(B_r = B_1^-=b^1_\ell P_r b^1_0 P_1^-b_0^1\).
	By the feasibility of \(\B\), considering
	\(v=b_{*}^r\) and \(T=\bar B_1 - e_1 + f_2\)
	(note that \(T\) is induced by the tracking \(b_\ell^1P_rb_0^1P_1b_0^2\)), 
	there exists a hanging edge \(b_{*}^rz\) at \(b_{*}^r\) such 
that \(z\notin V(T)\).
	Note that \(V(T) = V(B_1)\cup\{b_0^2\}\).
	Let \(B_{r+1}\) be the element of \(\B\) containing 
\(b_{*}^rz\).
	As in the previous case, we may assume that \(z = b_0^{r+1} \notin V(B_r)\cup\{b_0^2\}\).
	Then, \(b_1^{r+1} = b_*^{r}\) and 
	item (iv) of Definition~\ref{def:augmenting} holds for \(i=r\).
	
\item[(c)]
%	\textit{If \(\bar B_{r} = \bar B_{i'}\) for some \(i'\) such that $1<i'<r$, 
%	then \(b_0^{r+1}\notin V(\bar B_r - b^{i'}_1b^{i'}_0 + b^{i'}_1b^1_0 
%	- b^{i'}_{s(i')-1}b^1_0 + b^{i'}_{s(i')-1}b^{i'+1}_0) \).}\\
%	Suppose \(\bar B_r = \bar B_{i'}\) for some \(i'\) such that \(1<i'<r\).
	In this case, we have \(B_r = b_\ell^h P_r b_0^1 P_h^-b_0^h\).	
	Since $S$ is an augmenting sequence, by item (ii) of 
Definition~\ref{def:augmenting}, $b_1^{h}=b^{h-1}_{*}$ 
	and $b_1^{h+1}=b^{h}_{*}$. Then, since 
$b^{h-1}_{s(h-1)}=b^{h}_{s(h)}=b^1_0$, we conclude that 
	$e_{h-1}=b_1^{h}b_0^1$ and $f_{h+1}=b_{*}^{h}b_0^{h+1}$ are edges of 
\(B_{h-1}\) and \(B_{h+1}\), respectively.
	Put \(T = \bar B_r - b_1^{h}b_0^{h} + b_1^{h}b_0^1 
	- b_{*}^{h}b_0^1 + b_{*}^{h}b_0^{h+1} 
	= \bar B_r - f_h + e_{h-1} 
	- e_h + f_{h+1}\)
	(note that \(T\) is induced by the tracking \(b_\ell^hP_rb_0^1P_hb_0^{h+1}\)).
%	
%	We claim that \(T\) is a vanilla $\ell$-trail of $G$. In fact,
%	note that \(T' = \bar B_r - b_1^{h}b_0^{h} + b_1^{h}b_0^1
%			= \bar B_r - f_{h} + e_{h-1}\) is a trail and
%	contains a cycle \(C\) such that \(b_0^1\in V(C)\). 
%	The two edges of \(C\) that are 
%	incident to \(b_0^1\) are \(b_1^{h}b_0^1\) and 
%\(b_{*}^{h}b_0^1\).
%	Moreover, \(V(T') \subset V(B_{h})\), because \(B_{h}\) contains 
%\(b_0^1\).
%	Therefore, by item (iii) of Definition~\ref{def:augmenting} considering 
%\(S\),
%	we have \(b^{h+1}_0 \notin V(B_{h})\), and hence \(b^{h+1}_0 \notin 
%V(T')\), 
%	from where we conclude that \(T\) is a trail.
%	
	Since $T$ is a vanilla $\ell$-trail, by the feasibility of \(\B\), considering
	\(v=b_{*}^r\) and \(T\), 
	there is a hanging edge \(b_{*}^rz\) at \(b_{*}^r\) such 
that \(z\notin V(T)\).
	Let \(B_{r+1}\) be the element of \(\B\) containing 
\(b_{*}^rz\).
	As in the previous case, we may assume that \(z = b_0^{r+1} \notin V(T)\).
%	Then, item (v) of Definition~\ref{def:augmenting} holds for \(S'=B_1\cdots 
%B_rB_{r+1}\).
	Then, \(b_1^{r+1} = b_*^{r}\) and 
	item (v) of Definition~\ref{def:augmenting} holds for \(i=r\).
\end{itemize}

%Note that, 
%	since \(S\) is an augmenting sequence, 
%	item (i) of Definition~\ref{def:augmenting} holds for $S'$ and, for every 
%\hbox{\(i \in \{1,\ldots,r-1\}\),}
%	items (ii)--(v) of Definition~\ref{def:augmenting} hold for $S'$.
%	Thus, in order to prove that \(S'\) is an augmenting sequence,
%	we must prove that items (ii)--(v) holds for $S'$ when $i=r$.
%	It is easy to note that for \(i=r\) exactly one of the items (iii)-(v) 
%of Definition~\ref{def:augmenting} holds.
	
%	If \(B_{r+1}\) is the element of \(\B\)
%	containing a hanging edge of \(b_{s(r)-1}^r\),
%	then item (ii) of Definition~\ref{def:augmenting} holds for \(S' = B_1\cdots B_rB_{r+1}\). 

	We just proved that there exists an element $B_{r+1}$ of $\B$ 
such that $S'=B_1B_2\cdots B_r B_{r+1}$ is 
	an augmenting sequence of \(\B\),
	a contradiction to the maximality of \(S\). 
	Therefore, \(S\) is a \full-augmenting sequence.
\end{proof}

The next result follows directly from Lemmas~\ref{lemma:augmenting-sequence}~and~\ref{lemma:existence-of-sequence}.

\begin{corollary}\label{cor:main}
	Let \(\ell\) be a positive integer and let \(G\) be a graph.
	If $\B$ is a feasible \(k\)-complete \(\ell\)-tracking decomposition of $G$
	and \(\tau(\B) > 0\),
	then there is an \(\ell\)-tracking decomposition \(\B'\) of \(G\) 
	such that the following holds.
	\begin{itemize}
	 \item \(\tau(\B') < \tau(\B)\);
	 \item \(\B'(v) = \B(v)\) for every $v\in V(G)$;
	 \item $\B'$ is \(k\)-complete.
	\end{itemize}
\end{corollary}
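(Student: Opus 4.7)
The plan is to derive this corollary as a direct chaining of the two main lemmas of the subsection, without introducing any new machinery. First I would apply Lemma~\ref{lemma:existence-of-sequence} to~$\B$: since $\B$ is feasible and $\tau(\B) > 0$ (both by hypothesis), that lemma produces a \full-augmenting sequence $S = B_1 B_2 \cdots B_r$ of~$\B$. This is the only nontrivial combinatorial object we need to extract, and its existence is already guaranteed by the quoted lemma.

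Next I would feed this sequence into Lemma~\ref{lemma:augmenting-sequence}. The hypotheses needed there are that $\B$ is a $k$-complete $\ell$-tracking decomposition (given) and that $\B$ contains a \full-augmenting sequence (just obtained). The lemma then returns an $\ell$-tracking decomposition $\B'$ of $G$ satisfying exactly the three bullet points in the statement of the corollary: strictly smaller $\tau$-value, preserved endpoint-count function $\B'(v) = \B(v)$ for all $v \in V(G)$, and preserved $k$-completeness. Matching these conclusions to the desired ones finishes the argument.

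There is essentially no obstacle beyond bookkeeping: all the delicate case analysis (the induction on $|S|$, the transformations $B_1 \mapsto B_1'$, $B_2 \mapsto B_2'$, and the verification that the hanging-edge counts do not decrease anywhere) has already been absorbed into Lemma~\ref{lemma:augmenting-sequence}, and the construction of the sequence itself has been absorbed into Lemma~\ref{lemma:existence-of-sequence}. Hence the corollary is, as the paper notes, a one-line composition of the two preceding results, and my proposed proof will simply state the two applications in sequence.
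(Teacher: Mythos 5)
Your proposal is correct and matches the paper exactly: the paper states that Corollary~\ref{cor:main} ``follows directly from Lemmas~\ref{lemma:augmenting-sequence}~and~\ref{lemma:existence-of-sequence},'' which is precisely the two-step chaining you describe. Nothing more is needed.
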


The next lemma, the main result of this section, combines
Lemma~\ref{lemma:choose-element-all} and Corollary~\ref{cor:main} to 
obtain \(\ell\)-path tracking decompositions
from  $\lceil(\ell +1)/2\rceil$-complete \(\ell\)-tracking decompositions.

%\begin{lemma}[The Disentangling Lemma]\label{lemma:disentangling}
%	Let \(\ell\) be a positive integer and let \(G\) be a bipartite graph.
%	If $\B$ is an \(\lceil (\ell +1)/2\rceil\)-complete 
%	\(\ell\)-tracking decomposition of $G$,
%	then \(G\) admits an \(\lceil (\ell+1)/2\rceil\)-complete
%	\(\ell\)-path tracking decomposition \(\B'\)
%	such that \(\B'(v) = \B(v)\) for every vertex \(v\) of \(G\).
%\end{lemma}
%%%% Aplicação da sugestão do Mota
\begin{lemma}[The Disentangling Lemma]\label{lemma:disentangling}
	Let \(\ell\) and $k$ be positive integers and let \(G\) be a bipartite graph.
	If $k\geq \lceil (\ell +1)/2\rceil$ and $\B$ is a $k$-complete 
	\(\ell\)-tracking decomposition of $G$,
	then \(G\) admits a $k$-complete
	\(\ell\)-path tracking decomposition \(\B'\)
	such that \(\B'(v) = \B(v)\) for every vertex \(v\) of \(G\).
\end{lemma}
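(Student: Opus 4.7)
The plan is to prove the Disentangling Lemma by induction on $\tau(\B)$, using Lemma~\ref{lemma:choose-element-all} to guarantee feasibility and Corollary~\ref{cor:main} as the inductive step that strictly decreases~$\tau$ while preserving $k$-completeness and the function $\B(\cdot)$.

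First I would address the base case. If $\tau(\B)=0$, then for every $B \in \B$ both end-vertices have degree~$1$ in $\bar B$, which (together with $\bar B$ being a vanilla trail) forces $\bar B$ to be a path. Hence $\B$ is already a $k$-complete $\ell$-path tracking decomposition, and taking $\B' = \B$ settles this case trivially.

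For the inductive step, suppose $\tau(\B) > 0$ and that the lemma holds for every $k$-complete $\ell$-tracking decomposition $\B^{\circ}$ of~$G$ with $\tau(\B^{\circ}) < \tau(\B)$. Since $G$ is bipartite, $\B$ is $k$-complete, and $k \geq \lceil (\ell+1)/2\rceil$, Lemma~\ref{lemma:choose-element-all} yields that $\B$ is feasible. Combined with $\tau(\B) > 0$, the hypotheses of Corollary~\ref{cor:main} are satisfied, so there exists an $\ell$-tracking decomposition $\B''$ of~$G$ with $\tau(\B'') < \tau(\B)$, with $\B''(v) = \B(v)$ for every $v \in V(G)$, and with $\B''$ still $k$-complete. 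Applying the induction hypothesis to $\B''$ produces a $k$-complete $\ell$-path tracking decomposition $\B'$ of~$G$ satisfying $\B'(v) = \B''(v)$ for every~$v$. Transitivity gives $\B'(v) = \B''(v) = \B(v)$, completing the induction.

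Termination is guaranteed because $\tau$ takes values in the non-negative integers, and each application of Corollary~\ref{cor:main} strictly decreases~$\tau$; in at most $\tau(\B)$ steps we reach a decomposition with $\tau = 0$, which, as noted, is an $\ell$-path tracking decomposition. There is no serious obstacle here, since the heavy lifting has already been done: Lemma~\ref{lemma:choose-element-all} ensures that $k$-completeness (which is preserved by Corollary~\ref{cor:main}) is enough to keep feasibility available at every step, and Corollary~\ref{cor:main} preserves both $k$-completeness and the end-vertex statistics~$\B(\cdot)$. The only small point to verify carefully is precisely that both properties are maintained across the iteration—but this is immediate from the statement of Corollary~\ref{cor:main}, so the proof is essentially a bookkeeping induction built on top of the previously established machinery.
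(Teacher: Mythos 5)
Your proof is correct and follows essentially the same approach as the paper, merely phrasing the argument as an induction on $\tau(\B)$ rather than as a minimality argument (the paper picks $\B_{\min}$ minimizing $\tau$ over all $k$-complete $\ell$-tracking decompositions with the same end-vertex statistics and derives a contradiction from Corollary~\ref{cor:main}). The two formulations are interchangeable, and both rest on the same two ingredients: Lemma~\ref{lemma:choose-element-all} for feasibility and Corollary~\ref{cor:main} for the strict decrease in $\tau$ while preserving $k$-completeness and $\B(\cdot)$.
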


\begin{proof}
Let \(\ell\), \(k\), \(G\) and  $\B$ be as in the hypothesis of the lemma. 
Let \(\mathbb{B}\) be the set of all \(k\)-complete 
\(\ell\)-tracking decompositions \(\B'\) of \(G\)
such that \(\B'(v) = \B(v)\) for every vertex \(v\) of \(G\).
By the hypothesis, \(\mathbb{B} \neq \emptyset\).
Let \(\tau^* = \min\{\tau(\B') \colon \B' \in \mathbb{B}\}\) 
and let \(\B_{\min}\) be an element of \(\mathbb{B}\) such that \(\tau(\B_{\min}) = \tau^*\).
If \(\tau^* = 0\), then \(\B_{\min}\) is an \(\ell\)-path tracking decomposition 
and the proof is complete. 
Then, assume \(\tau^*>0\).
By Lemma~\ref{lemma:choose-element-all},
\(\B_{\min}\) is a feasible \(\ell\)-tracking decomposition.
Since \(\tau(\B_{\min})>0\), by Corollary~\ref{cor:main}
(applied with \(k\), $\ell$, $G$ and $\B_{\min}$), 
there exists an $k$-complete \(\ell\)-tracking decomposition $\B'$ 
of $G$ such that \(\tau(\B') < \tau(\B_{\min}) = \tau^*\) and \(\B'(v) = \B(v)\) for every 
vertex \(v\) of \(G\). 
Therefore, $\B'$ is an element of $\mathbb{B}$ with $\tau(\B') < \tau(\B_{\min})$, 
a contradiction to the minimality of \(\tau^*\).
\end{proof}

\section{Factorizations}\label{section:factorizations}

The goal of this section is to show that some bipartite highly edge-connected 
graphs
admit ``well structured'' decompositions, called \emph{bifactorizations}, which 
are important structures in 
the proof of the main theorems of this paper (shown in Section~\ref{section:high}). 
The diagram of Figure~\ref{fig:outline} shows how the results of this section are related.

\begin{comment}
In Subsection~\ref{section:fractional}, we prove two auxiliary results 
(Corollaries~\ref{corollary:odd-special-decomposition}~and 
\ref{corollary:even-special-decomposition}) that are useful in the proof 
of the main results of this section, 
Lemmas~\ref{lemma:odd-strong-factorization}~and
\ref{lemma:even-strong-factorization}, which are proved in 
Subsection~\ref{sec:bip-factors}.
\end{comment}

\begin{comment}
In Subsection~\ref{section:fractional}, we prove two auxiliary results 
(Lemma~\ref{lemma:1-special-decomposition}~and 
Corollary~\ref{corollary:even-special-decomposition}) that are useful in the proof 
of the main results of this section, 
Lemmas~\ref{lemma:odd-strong-factorization}~and
\ref{lemma:even-strong-factorization}, which are proved in 
Subsection~\ref{sec:bip-factors}.
\end{comment}

\subsection{Fractional factorizations}\label{section:fractional}

We extend ideas developed in~\cite{BoMoOsWa14+} in 
order to prove that some highly edge-connected bipartite 
graphs admit structured factorizations. Let us start with some definitions.

\begin{definition}[Factor]
	Let $r$ and $k$ be positive integers and \(G=(V,E)\) be a graph. Let 
\(X\subset V\) and \(F \subset E\).
	We say that \(F\) is an \emph{\((X,r,k)\)-factor} of 
\(G\) if, for every \(v\in X\), we have \(d_F(v) = (r/k)d_G(v)\).
\end{definition}

\begin{definition}[Fractional factorization]
Let \(k\) and \(\ell\) be positive integers such that $k-\ell$ is a 
positive even number. Let $G=(V,E)$ be a graph and let \(X\subset V\).
We say that a partition \(\mathcal{F}=\{M_1,\ldots,M_\ell,F_1,\ldots,F_{(k-\ell)/2}\}\) of $E$ is an 
\emph{\((X,\ell,k)\)-fractional factorization} of $G$ if the following holds.
	\begin{itemize}
		\item \(M_i\) is an \((X,1,k)\)-factor of $G$, for $1\leq i\leq \ell$;
		\item \(F_j\) is an Eulerian \((X,2,k)\)-factor of $G$, for $1\leq j\leq (k-\ell)/2$.	
	\end{itemize}
\end{definition}

Note that, if \(G\) contains an \((X,1,k)\)-factor, then \(d_G(v)\) is 
divisible by $k$ for every \(v\in X\). Therefore, 
this fact implies that, if \(G\) admits an \((X,\ell,k)\)-fractional 
factorization, 
then \(d(v)\) is divisible by \(k\) for every \(v\in X\). The next lemma is the 
core of this section.

\begin{lemma}\label{lemma:1-special-decomposition}
Let \(k\) be a positive integer.
If $G=(A,B;E)$ is a $2k$-edge-connected bipartite graph such that $d_G(v)$ is 
divisible by $2k+1$ for every 
$v\in A$, then $G$ admits an \((A,1,2k+1)\)-fractional factorization.
\end{lemma}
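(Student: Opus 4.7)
The plan is to reduce to a tractable setting where Von Baebler's 1-factor theorem (Theorem~\ref{thm:Kano}) and Petersen's 2-factor theorem (Theorem~\ref{thm:Petersen}) apply, namely a $(2k+1)$-regular, $2k$-edge-connected multigraph $H$ obtained from $G$ by splittings and liftings, and then to pull back the resulting decomposition to $G$. Recall that, with $\ell=1$, the required fractional factorization consists of a single $(A,1,2k+1)$-factor $M_1$ together with $k$ Eulerian $(A,2,2k+1)$-factors $F_1,\ldots,F_k$; thus each $v\in A$ must contribute $m_v := d_G(v)/(2k+1)$ edges to $M_1$ and $2m_v$ edges to each $F_j$.

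First, I would apply Lemma~\ref{lemma:splitting-of-vertices} to detach each $v\in A$ with subdegree sequence consisting of $m_v$ copies of $2k+1$. Since $2k+1\geq 2k$, this yields a $2k$-edge-connected bipartite graph $G'=(A',B;E')$ whose $A'$-side is $(2k+1)$-regular. The second step is to modify $G'$ on the $B$-side so as to obtain a $(2k+1)$-regular, $2k$-edge-connected multigraph $H$: I would use Lemma~\ref{lemma:splitting-of-vertices} again to split $B$-vertices of large degree into pieces of odd degree at least $2k+1$, and then invoke Theorem~\ref{thm:Mader} (using Lemma~\ref{lemma:non-cut-vertex} to verify that the $B$-vertices of interest are not cut-vertices, since $G'$ is $2k$-edge-connected) to perform admissible liftings that bring each such piece down to degree exactly $2k+1$. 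Throughout, $2k$-edge-connectivity is preserved.

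With $H$ in hand, I would observe that $(2k+1)|V(H)|=2|E(H)|$ and $2k+1$ is odd, so $|V(H)|$ is even; hence Theorem~\ref{thm:Kano} with $r=2k+1$ produces a $1$-factor $M$ of $H$, and $H-M$ is a $2k$-regular multigraph, so Theorem~\ref{thm:Petersen} yields a decomposition $H-M = F_1^H\cup\cdots\cup F_k^H$ into $2$-factors. To finish, I would undo every lifting and splitting: a lifted edge $uw$ at a $B$-vertex $b$ corresponds to two original edges $ub,wb$, so wherever it lies in the decomposition, both original edges join the same factor, contributing $+2$ to the degree at $b$; hence the pulled-back $F_j$'s retain even degrees at every $B$-vertex and are Eulerian, while the $A'$-side degrees are untouched. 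Unsplitting then merges the $m_v$ copies of each $v\in A$, producing $d_{M_1}(v)=m_v$ and $d_{F_j}(v)=2m_v$, which is exactly the desired $(A,1,2k+1)$-fractional factorization.

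The principal obstacle is the $B$-side regularization. Liftings preserve degree parity, so splittings must produce pieces of odd degree at least $2k+1$; this works cleanly for $B$-vertices whose degree is either odd or at least $4k+2$, but $B$-vertices of small even degree (between $2k$ and $4k$) cannot be handled in isolation. Overcoming this will require a more careful, coordinated procedure—paired splittings or liftings that simultaneously correct parity at two offending vertices—while retaining $2k$-edge-connectivity through Lemmas~\ref{lemma:splitting-of-vertices} and~\ref{lemma:non-cut-vertex}.
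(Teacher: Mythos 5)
Your reduction plan (detach the $A$-side via Nash--Williams into $m_v$ pieces of degree $2k+1$, regularize the $B$-side by liftings, invoke Von Baebler and Petersen on the resulting $(2k+1)$-regular $2k$-edge-connected multigraph, then pull back) is exactly the skeleton of the paper's proof, and your pullback argument is right: a lifted or suppressed edge corresponds to a $2$-path in $G$ with interior vertex in $B$, so a pulled-back $2$-factor contributes an even amount to every $B$-degree and remains Eulerian, while the $A'$-degrees are untouched, giving $d_{M_1}(v)=m_v$ and $d_{F_j}(v)=2m_v$ after un-splitting. The divisibility $2|E(H)|=(2k+1)|V(H)|$ forcing $|V(H)|$ even is also the paper's step.

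The gap you yourself flag is, however, fatal as stated, and you have not closed it: you insist that every $B$-piece end up with degree exactly $2k+1$, which is odd, but liftings preserve degree parity, so an even-degree $B$-piece can never reach $2k+1$. As you note, a $B$-vertex of even degree in $[2k,4k]$ cannot even be split into odd pieces of size at least $2k$. The paper resolves this not by ``paired'' fixes but by relaxing the target: it splits each $B$-vertex into one piece of degree $2k+r_v$ (where $d_G(v)=2ks_v+r_v$, $0\leq r_v<2k$) and $s_v-1$ pieces of degree $2k$, so the maximum degree in $G'$ is at most $4k-1$; then it applies Mader's theorem (with Lemma~\ref{lemma:non-cut-vertex}) to perform liftings at every $B'$-vertex whose degree is not in $\{1,2,2k+1\}$. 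Because every $B'$-piece starts with degree at least $2k$ and parity is preserved, after a maximal sequence of liftings each $B'$-vertex has degree $2$ or $2k+1$. The key move you are missing is the \emph{suppression}: the degree-$2$ vertices are removed (their two incident edges merged into one), which absorbs all the even-parity $B'$-pieces into $2$-paths rather than trying to force them to odd degree. The surviving $B$-vertices $B^*$ all have degree $2k+1$, possibly $B^*=\emptyset$, and the resulting $G^*$ is still $2k$-edge-connected (the paper proves this directly using the preservation of $p(x,y)\geq 2k$ between $A'$-vertices and the fact that $B^*$ is independent of degree $2k+1$). So the ``coordinated paired procedure'' you gesture at is unnecessary: suppressing degree-$2$ $B'$-vertices, which you already know how to pull back as $2$-paths, is the whole fix.
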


\begin{proof}
Let \(G=(A,B;E)\) be as in the hypothesis.
First, we want to apply Lemma~\ref{lemma:splitting-of-vertices} to obtain a 
\(2k\)-edge-connected graph $G'$ 
with maximum degree \(4k-1\).
To do this, for every vertex $v \in B$, we take integers $s_v \geq 1$ and $0 \leq r_v < 2k$ 
such that $d_G(v) = 2ks_v + r_v$.
We put $d^v_1 = 2k+r_v$ and \(d^v_2 = \cdots = d^v_{s_v} = 2k\).
Furthermore, for every 
vertex $v\in A$, we put $s_v=d_G(v)/(2k+1)$ and $d^v_i=2k+1$ for $1\leq i\leq s_v$.
By Lemma~\ref{lemma:splitting-of-vertices} 
(applied with parameters $2k$ and the integers $s_v$, $d^v_i$ ($1\leq i\leq 
s_v$) for every $v\in V(G)$), there exists a 
$2k$-edge-connected bipartite graph $G'$ obtained from \(G\) by splitting each 
vertex \(v\) of \(A\) into \(s_v\) 
vertices of degree \(2k+1\), and each vertex \(v\) of \(B\) into a 
vertex of
degree $2k+r_v < 4k$ and $s_v-1$ vertices of degree $2k$.
Let \(A'\) and \(B'\) be the set of vertices of \(G'\) obtained from 
the vertices of \(A\) and \(B\),
respectively. 
For ease of notation, if \(v \in (A' \cup B') \setminus (A \cup B)\) we also 
denote by \(v\) the vertex in \(A \cup B\) that gave rise to $v$.

The next step is to obtain a $(2k+1)$-regular multigraph $G^*$ from $G'$ by 
using lifting operations. For this, we 
will add some edges to $A'$ and remove the even-degree vertices of $B'$ by successive
applications of Mader's Lifting Theorem as 
follows.
Let \(G'_0,G'_1,\ldots,G'_\lambda\) be a maximal sequence of graphs such that \(G'_0 = G'\)
and (for \(i\geq 0\)) \(G'_{i+1}\) is the graph obtained from \(G'_i\) 
by the application of an admissible lifting 
at an arbitrary vertex \(v\) with $d_{G'}(v)\notin \{1,2,2k+1\}$.
%Note that, since \(G'\) is finite and \(|E(G'_{i+1})| < |E(G'_i)|\)
%for every \(i \geq 0\), this sequence is well-defined, i.e, \(\lambda < \infty\).

Recall that, given any two distinct vertices of \(G'\), say \(x\) and \(y\), we denote by \(p_{G'}(x,y)\)
the maximum number of pairwise edge-disjoint paths joining \(x\) and \(y\) in \({G'}\).
We claim that \(p_{G'_i}(x,y) \geq 2k\) for any \(x,y\) in \(A'\) and every \(i\geq 0\).
Clearly, \(p_{G'_0}(x,y) \geq 2k\) holds for any \(x,y\) in \(A'\), since \(G'\) is \(2k\)-edge-connected.
Fix \(i\geq 0\) and suppose \(p_{G'_i}(x,y) \geq 2k\) holds for any \(x,y\) in \(A'\).
Let \(x,y\) be two vertices in \(A'\).
Since \(G'_{i+1}\) is a graph obtained from \(G'_i\) by the application 
of an admissible lifting at a vertex \(v\) in \(B'\), 
we have \(p_{G'_{i+1}}(x,y) \geq p_{G'_i}(x,y) \geq 2k\).

We claim that, if \(v\in B'\)
then \(d_{G'_\lambda}(v) \in \{2,2k+1\}\).
Suppose, for a contradiction, that there is a vertex \(v\) in \(B'\) 
such that \(d_{G'_\lambda}(v) \notin \{2,2k+1\}\).
Note that \(d_{G'_i}(u) \geq d_{G'_{i+1}}(u)\geq 2\)
for every \(u\in V(G')\) and every \(0\leq i\leq \lambda\).
Since \(d_{G'}(u) \leq 4k-1\) for every \(u\in V(G')\),
we have \(2\leq d_{G'_i}(u)\leq 4k-1\) for every \(0\leq i\leq \lambda\).
Therefore, \(2\leq d_{G'_\lambda}(v)\leq 4k-1\).
Since \(d_{G'_\lambda}(v) \leq 4k-1\),
and for any two neighbors $x$ and $y$ of \(v\) we have \(p_{G'_\lambda}(x,y)\geq 2k\),
Lemma~\ref{lemma:non-cut-vertex} implies that \(v\) is not a cut-vertex of \(G'_\lambda\).
Then, by Mader's Lifting Theorem (Theorem~\ref{thm:Mader}) applied to \(G'_\lambda\),
there is an admissible lifting at \(v\).
Therefore, \(G'_0,G'_1,\ldots,G'_\lambda\) is not maximal, a contradiction.

In \(G'_\lambda\) the set $B'$ may have some vertices of degree $2$. 
For every such vertex $v$, if $u$ and $w$ are the neighbours of $v$, we apply a \(uw\)-lifting at $v$, 
and remove the vertex $v$, i.e, we perform a supression of \(v\).
Let \(G^*\) be the graph obtained by applying this process to all vertices of degree $2$ in $B'$.
Note that the number of pairwise edge-disjoint paths joining two distinct vertices of \(A'\)
remains the same, i.e, \(p_{G^*}(x,y) \geq p_{G_\lambda}(x,y)\geq 2k\) for every \(x,y\) in \(A'\).
Clearly, the 
set of vertices of $G^*$ that belong to $B'$ is an independent set;  
we denote it by $B^*$  (eventually, \(B^* = \emptyset\)). 
Furthermore, every vertex in \(B^*\) has degree \(2k+1\).

\begin{claim}
\(G^*\) is \(2k\)-edge connected.
\end{claim}
\begin{proof}
Let \(Y \subset V(G^*)\).
Suppose there is at least one vertex \(x\) of \(A'\) in \(Y\)
and at least one vertex \(y\) of \(A'\) in \(V(G^*)-Y\).
Since there are at least \(2k\) edge-disjoint paths joining \(x\) to \(y\),
there are at least \(2k\) edges with vertices in both \(Y\) and 
\(V(G^*)-Y\).
Now, suppose that \(A'\subset Y\) (otherwise \(A'\subset V(G^*)-Y\) and
we take \(V(G^*)-Y\) instead of \(Y\)), and then \(V(G^*)-Y \subset B^*\).
Since \(B^*\) is an independent set, all edges with a vertex in \(V(G^*)-Y\) 
must have the other vertex in \(A'\).
Since every vertex in \(B^*\) has degree \(2k+1\), 
there are at least \(2k+1\) edges with vertices in both \(Y\) and \(V(G^*)-Y\). 
\end{proof}

We conclude that $G^*$ is a 
$2k$-edge-connected $(2k+1)$-regular multigraph with vertex-set $A'\cup B^*$, 
where $B^*$ is an independent set.

%Now, analyze multigraph $G^*$. 
Since every vertex of \(G^*\) has odd degree, $|V(G^*)|$ is even.
Thus, \(G^*\) is a \(2k\)-edge-connected 
\((2k+1)\)-regular multigraph of even order. 
By Theorem~\ref{thm:Kano}, \(G^*\) contains a perfect matching \(M^*\).
Since the multigraph \(J^* = G^* - M^*\)  
is \(2k\)-regular, Theorem~\ref{thm:Petersen} implies that \(J^*\) admits a decomposition 
into \(2\)-factors, say \(F^*_1,\ldots,F^*_k\).
Therefore, \(M^*,F^*_1,\ldots,F^*_k\) is a partition of \(E(G^*)\).

Now, let us get back to the bipartite graph $G$. 
Let \(xy\) be an edge of \(G^*\).
If $x\in A'$ and $y\in B^*$, 
then \(xy\) corresponds to an edge of \(G\). On the other hand, if \(x,y\in A'\),
then there is a vertex \(v_{xy}\) of \(B'\) and two edges 
\(xv_{xy}\) and \(v_{xy}y\) in $E(G')$. Furthermore,
\(xy\) was obtained by an \(xy\)-lifting 
at \(v_{xy}\) (either by an application of Mader's Lifting 
Theorem or by the supression of vertices of degree $2$).
Then, each edge of \(G^*\) represents an edge of \(G\) or a \(2\)-path
in \(G\) such that the internal vertices of these \(2\)-paths are 
always 
in \(B\).
For every edge $xy\in E(G^*)$, define \(f(xy) = \{xy\}\) if $x\in A'$ and $y\in B^*$,
and \(f(xy) = \{xv_{xy},v_{xy}y\}\) if \(x,y\in A'\). Note that $f(xy)\subset E(G)$ for every edge $xy\in E(G^*)$.
For a set \(S\subset E(G^*)\), put \(f(S) = \cup_{e\in S} f(e)\).
The partition of \(E(G^*)\) into \(M^*,F^*_1,\ldots,F^*_k\) induces 
a partition of \(E(G)\) into \(M = f(M^*)\) and \(F_i = f(F^*_i)\) for $1\leq i\leq k$.

We will prove that \(\{M,F_1,\ldots,F_k\}\) is an \((A,1,2k+1)\)-fractional 
factorization. Fix $i\in\{1,\ldots,k\}$. We will show that \(M\) is an \((A,1,2k+1)\)-factor of $G$ 
and \(F_i\) is an Eulerian \((A,2,2k+1)\)-factor of $G$.
Let \(v\) be a vertex of \(A\) in \(G\) and put \(d'(v) = d(v)/(2k+1)\). Then, we know that 
\(v\) is represented by \(d'(v)\) vertices in \(G^*\).
Since \(M^*\) is a perfect matching in \(G^*\), there are \(d'(v)\) edges
of \(M\) entering \(v\) and, since \(F^*_i\) is a \(2\)-factor in \(G^*\),
there are \(2d'(v)\) edges of \(F_i\) incident to \(v\).
Finally, since \(F^*_i\) is Eulerian, the set
\(F_i\) is Eulerian, concluding the proof. 
\end{proof}

\begin{corollary}\label{corollary:even-special-decomposition}
Let \(k\) be a positive integer.
If $G=(A,B;E)$ is a $32k$-edge-connected bipartite 
graph such 
that $d_G(v)$ is divisible by $2k+2$ for every 
$v\in A$, then $G$ admits an \((A,2,2k+2)\)-fractional factorization.	
\end{corollary}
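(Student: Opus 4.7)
The plan is to reduce the even case to the already-proved odd case (Lemma~\ref{lemma:1-special-decomposition}) by peeling off a single $(A,1,2k+2)$-factor using the bifurcation provided by Lemma~\ref{lem:barat-part-3}, and then extracting from the remainder an $(A,1,2k+1)$-fractional factorization which, after rescaling to $G$, provides the second matching and the $k$ required Eulerian $(A,2,2k+2)$-factors.

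First, I would apply Lemma~\ref{lem:barat-part-3} with parameters $k_{\text{lem}} := 2k+1$, $m := 2k$ and $r := 1$. The edge-connectivity requirement becomes $8m\lceil(k_{\text{lem}}+r)/k_{\text{lem}}\rceil = 16k\lceil(2k+2)/(2k+1)\rceil = 32k$, which is exactly the hypothesis. Furthermore $d_G(v)$ is divisible by $k_{\text{lem}}+r = 2k+2$ for every $v\in A$, as required. This yields a decomposition of $G$ into two spanning subgraphs $G_1$ and $G_2$ such that $G_1$ is $2k$-edge-connected and, for every $v\in A$,
\[
d_{G_1}(v) = \frac{2k+1}{2k+2}\,d_G(v), \qquad d_{G_2}(v) = \frac{1}{2k+2}\,d_G(v).
\]
In particular, $E(G_2)$ is an $(A,1,2k+2)$-factor of $G$.

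Next, since $d_{G_1}(v) = (2k+1)\,d_G(v)/(2k+2)$ is divisible by $2k+1$ for every $v\in A$, I would apply Lemma~\ref{lemma:1-special-decomposition} to the bipartite graph $G_1$. This produces an $(A,1,2k+1)$-fractional factorization $\{M, F_1,\ldots,F_k\}$ of $G_1$, where $M$ is an $(A,1,2k+1)$-factor and each $F_j$ is an Eulerian $(A,2,2k+1)$-factor of $G_1$. Converting the degree identities back to $G$ via the ratio $(2k+1)/(2k+2)$ gives, for every $v\in A$,
\[
d_M(v) = \frac{d_{G_1}(v)}{2k+1} = \frac{d_G(v)}{2k+2}, \qquad d_{F_j}(v) = \frac{2\,d_{G_1}(v)}{2k+1} = \frac{2\,d_G(v)}{2k+2}.
\]
Thus $M$ is an $(A,1,2k+2)$-factor of $G$ and each $F_j$ is an $(A,2,2k+2)$-factor of $G$; the Eulerian property of $F_j$ is intrinsic to the subgraph it induces and is therefore preserved when viewing $F_j\subset E(G)$.

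Setting $M_1 := E(G_2)$ and $M_2 := M$, the partition $\{M_1, M_2, F_1, \ldots, F_k\}$ is an $(A,2,2k+2)$-fractional factorization of $G$, as desired. There is no essential obstacle beyond the careful parameter bookkeeping: the whole point is that the choice $m=2k$ and $r=1$ in Lemma~\ref{lem:barat-part-3} is calibrated so that the leftover part $G_1$ lands exactly in the regime covered by Lemma~\ref{lemma:1-special-decomposition}, and $32k$ is the smallest connectivity that makes both applications valid simultaneously.
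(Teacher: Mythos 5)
Your proof is correct and follows precisely the same route as the paper: apply Lemma~\ref{lem:barat-part-3} with $m=2k$ and $r=1$ to split off a spanning $(A,1,2k+2)$-factor while keeping a $2k$-edge-connected remainder, then apply Lemma~\ref{lemma:1-special-decomposition} to that remainder and rescale the resulting $(A,1,2k+1)$-fractional factorization into $(A,\cdot,2k+2)$-factors of $G$. Your write-up is somewhat more explicit about the degree arithmetic when transferring factors from $G_1$ back to $G$, but the underlying argument is the same.
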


\begin{proof}
Let \(k\) and \(G=(A,B;E)\) be as in the hypothesis.
We claim that \(G\) contains an \((A,1,2k+2)\)-factor \(F\) such that
\(G-F\) is \(2k\)-edge-connected (note that this implies that $d_{G-F}(v)$ is 
divisible by $2k+1$ for every $v\in A$).

Since $G$ is $32k=16k\lceil(2k+2)/(2k+1)\rceil$-edge-connected, 
by Lemma~\ref{lem:barat-part-3} (applied with parameters $2k+1$, $m=2k$ and $r=1$), 
the graph $G$ admits a 
decomposition into graphs \(G_k\) and \(G_r\) such that \(G_k\) is 
\(2k\)-edge-connected and \(d_{G_k}(v) = 
\big((2k+1)/(2k+2)\big)d_G(v)\), and \(d_{G_r}(v) = \big(1/(2k+2)\big)d_G(v)\) 
for every \(v\in A\).
Therefore, \(E(G_r)\) is an \((A,1,2k+2)\)-factor.

By Lemma~\ref{lemma:1-special-decomposition}, \(G_k\) admits an 
\((A,1,2k+1)\)-fractional factorization 
\(\mathcal{F}\). Therefore, since \(d_{G_k}(v) = 
\big((2k+1)/(2k+2)\big)d_G(v)\) for every $v\in A$, we conclude that 
\(\mathcal{F} + E(G_r)\) is an 
\((A,2,2k+2)\)-fractional factorization of \(G\).

\end{proof}

\subsection{Bifactorizations}\label{sec:bip-factors}

% The way we will obtain the decomposition of highly edge-connected bipartite 
% graphs $G$ into paths of a fixed 
% length $\ell$ is by combining fractional factorizations,
% obtaining an \(\ell\)-tracking decomposition, and ``disentangling'' the 
% elements of such a decomposition. 

To obtain a decomposition of highly edge-connected bipartite graphs
$G$ into paths of a fixed length $\ell$, we will combine fractional
factorizations to obtain first an \(\ell\)-tracking decomposition.
More specifically, we decompose $G$ into 
graphs $G_1$ and $G_2$ and then we combine 
a fractional factorization of \(G_1\) with a fractional factorization of \(G_2\). 
This process, called bifactorizations, is defined as follows.
%The next result combines these two factorizations.

\begin{definition}[Bifactorization]\label{def:bip-factor}
Let \(k\) and \(\ell\) be positive integers such that $k-\ell$ is a positive 
even number, and let \(G=(A_1,A_2;E)\) be a bipartite graph. 

Let \(\mathcal{F}_1,\mathcal{F}_2\) be families of subsets of \(E\)
and put \(G_i = G[\cup_{F\in\mathcal{F}_i}F]\), for \(i=1,2\).
We say that \(\mathbb{F}=(\mathcal{F}_1,\mathcal{F}_2)\) is an 
\emph{\((\ell,k)\)-bifactorization of \(G\)}
if the following holds.
\begin{itemize}
 \item [(i)] \(\{G_1,G_2\}\) is a decomposition of \(G\) ;
 \item [(ii)] \(\mathcal{F}_i\) is an \((A_i,\ell,k)\)-fractional 
factorization of $G_i$, for $1\leq i\leq 2$.
\end{itemize}
%Furthermore, if there exists an \((\ell,k)\)-bifactorization of \(G\), then we 
%say that \(G\) \emph{admits an 
%\((\ell,k)\)-bifactorization} or that \(G\) is \emph{\((\ell,k)\)-bifactorable}.
If \(G\) admits an \((\ell,k)\)-bifactorization,
we say that \(G\) is \((\ell,k)\)-bifactorable.
\end{definition}

The next concept  
will be used to guarantee 
that \(G_1\) and \(G_2\) have a sufficiently 
large minimum degree.% in the proof of the main results of the paper.

\begin{definition}[Strong bifactorization]
Let \(k\) and \(\ell\) be positive integers.
Let \(G = (A_1,A_2;E)\) be a bipartite graph that admits an
\((\ell,k)\)-bifactorization \(\mathbb{F} = (\mathcal{F}_1,\mathcal{F}_2)\).
Let \(E_i = \bigcup_{F\in\mathcal{F}_i} F\) for $1\leq i\leq 2$. 
Let $p=1$ if $\ell$ is odd, and $p=2$ if $\ell$ is even. 
We say that \(\mathbb{F}\) is \emph{strong}
if \(d_{E_i}(v)\geq (k/p)((k/p)+p)\) for every 
\(v\) in \(A_i\)
for $1\leq i\leq 2$.
%Furthermore, if there exists a strong \((\ell,k)\)-bifactorization for \(G\), 
%then we say that 
%\(G\) \emph{admits a strong \((\ell,k)\)-bifactorization} or that \(G\) is 
If \(G\) admits a strong \((\ell,k)\)-bifactorization, we say that \(G\) is 
\emph{strongly \((\ell,k)\)-bifactorable}.
\end{definition}

For ease of notation, if \(F\) belongs to either \(\mathcal{F}_1\) or 
\(\mathcal{F}_2\), then
we say that \(F\) is an element of \(\mathbb{F}\).
In what follows, we give sufficient conditions for a bipartite graph to be 
strongly
bifactorable.

\begin{lemma}\label{lemma:odd-strong-factorization}
	Let \(k\) be a positive integer. Let \(r = (2k+1)(2k+2)\).
	If \(G\) is a  \(2(6k + 2r+1)\)-edge-connected 
	bipartite graph such that \(|E(G)|\) is divisible by \(2k+1\),
	then \(G\) is strongly \((1,2k+1)\)-bifactorable.
\end{lemma}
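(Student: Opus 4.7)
The plan is to combine Lemma~\ref{lem:tight-good-initial-decomposition} with Lemma~\ref{lemma:1-special-decomposition}; the constants have been arranged so that everything fits together exactly. Write $G = (A_1, A_2; E)$. First I would apply Lemma~\ref{lem:tight-good-initial-decomposition} with the parameter $k$ of that lemma instantiated as $2k+1$ and with the parameter $r$ instantiated as $r = (2k+1)(2k+2)$. The edge-connectivity hypothesis required there becomes $6(2k+1) + 4r - 4 = 12k + 4r + 2 = 2(6k+2r+1)$, which is precisely the edge-connectivity we have assumed on $G$; the divisibility of $|E(G)|$ by $2k+1$ is also given. Thus $G$ decomposes into two spanning $r$-edge-connected subgraphs $G_1, G_2$ such that, for $i = 1,2$, every vertex $v \in A_i$ has $d_{G_i}(v)$ divisible by $2k+1$.

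Next, for each $i \in \{1,2\}$, view $G_i$ as a bipartite graph with sides $A_i$ and $A_{3-i}$. Since $r = (2k+1)(2k+2) \geq 2k$, the graph $G_i$ is in particular $2k$-edge-connected, and the degrees on the $A_i$-side are divisible by $2k+1$. Hence Lemma~\ref{lemma:1-special-decomposition} applies (with $A_i$ in the role of $A$), yielding an $(A_i, 1, 2k+1)$-fractional factorization $\mathcal{F}_i$ of $G_i$. By Definition~\ref{def:bip-factor}, the pair $\mathbb{F} = (\mathcal{F}_1, \mathcal{F}_2)$ is then a $(1, 2k+1)$-bifactorization of $G$.

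Finally, I would verify strength. Here the bifactorization has parameter (the $k$ in Definition) equal to $2k+1$ and length parameter $\ell = 1$, which is odd, so $p = 1$. The required lower bound becomes
\[
	d_{E_i}(v) \;\geq\; \tfrac{2k+1}{1}\bigl(\tfrac{2k+1}{1} + 1\bigr) \;=\; (2k+1)(2k+2) \;=\; r
\]
for every $v \in A_i$. But $E_i = E(G_i)$ and $G_i$ is $r$-edge-connected, so $d_{E_i}(v) = d_{G_i}(v) \geq r$, as required. Hence $\mathbb{F}$ is a strong $(1, 2k+1)$-bifactorization, completing the proof.

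There is no genuine obstacle here: the lemma is essentially a bookkeeping composition of two previously established results, and the constants $r = (2k+1)(2k+2)$ and $2(6k+2r+1)$ in the hypothesis were chosen precisely to make Lemma~\ref{lem:tight-good-initial-decomposition}, Lemma~\ref{lemma:1-special-decomposition}, and the strength inequality all line up simultaneously. The only points requiring care are (a) checking that the edge-connectivity bound $6(2k+1)+4r-4$ coincides with the assumed $2(6k+2r+1)$, and (b) recording that $r \geq 2k$ so that Lemma~\ref{lemma:1-special-decomposition} is indeed applicable to each piece $G_i$.
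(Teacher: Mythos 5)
Your proof is correct and follows the same route as the paper's: apply Lemma~\ref{lem:tight-good-initial-decomposition} with parameters $2k+1$ and $r=(2k+1)(2k+2)$ to split $G$ into $r$-edge-connected pieces $G_1, G_2$ with the right divisibility on the appropriate sides, then apply Lemma~\ref{lemma:1-special-decomposition} to each piece (valid since $r \geq 2k$) to get the two fractional factorizations, and read the strength bound $d_{G_i}(v)\geq(2k+1)(2k+2)$ directly off the $r$-edge-connectivity. Your arithmetic check of the hypothesis, $6(2k+1)+4r-4 = 2(6k+2r+1)$, is also what the paper implicitly relies on.
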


\begin{proof}
	Let $k$, $r$ and $G=(A,B;E)$ be as in the hypothesis.
	By Lemma~\ref{lem:tight-good-initial-decomposition} (applied with 
\(2k+1\) and $r$), the graph \(G\) can be decomposed into two spanning edge-disjoint 
\(r\)-edge-connected 
	graphs \(G_1\) and \(G_2\) such that all vertices of \(A\) have degree 
divisible by \(2k+1\)
	in \(G_1\), and all vertices of \(B\) have degree divisible by 
\(2k+1\) 
in \(G_2\).  But since $r\geq 2k$, by Lemma~\ref{lemma:1-special-decomposition} (applied with \(k\)), 
	we conclude that \(G_1\) admits 
	an \((A,1,2k+1)\)-fractional factorization
	and \(G_2\) admits a \((B,1,2k+1)\)-fractional factorization.
	Therefore, \(G\) is \((1,2k+1)\)-bifactorable.
	Since \(G_1\) and \(G_2\) are \(r\)-edge-connected,
	we have \(d_{G_1}(v) \geq (2k+1)(2k+2)\) for every \(v\in A\),
	and \(d_{G_2}(v) \geq (2k+1)(2k+2)\) for every \(v\in B\), from 
where we conclude that \(G\) is strongly 
\((1,2k+1)\)-bifactorable.
\end{proof}

The proof of the next lemma can be easily obtained by replacing Lemma~\ref{lemma:1-special-decomposition} with
Corollary~\ref{corollary:even-special-decomposition} in the proof of Lemma~\ref{lemma:odd-strong-factorization}.

\begin{lemma}\label{lemma:even-strong-factorization}
	Let \(k\) be a positive integer.
	Let \(r = \max\{32k,(k+1)(k+3)\}\).
	If \(G\) is a  \(2(6k + 2r + 4)\)-edge-connected bipartite graph
	 such that \(|E(G)|\) is divisible by \(2k+2\),
	then \(G\) is strongly \((2,2k+2)\)-bifactorable.
\end{lemma}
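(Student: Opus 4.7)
The plan is to mimic the proof of Lemma~\ref{lemma:odd-strong-factorization}, substituting Corollary~\ref{corollary:even-special-decomposition} for Lemma~\ref{lemma:1-special-decomposition} at the appropriate step. Write $G=(A,B;E)$. First I would split $G$ into two spanning edge-connected pieces with the right divisibility on the two sides, then I would apply the even fractional factorization corollary to each piece separately, and finally I would check that the resulting bifactorization is strong.

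More concretely, the first step is to invoke Lemma~\ref{lem:tight-good-initial-decomposition} with parameters $2k+2$ (playing the role of $k$) and $r=\max\{32k,(k+1)(k+3)\}$. The required edge-connectivity is $6(2k+2)+4r-4=12k+8+4r=2(6k+2r+4)$, which is exactly what we assume. This yields a decomposition $\{G_1,G_2\}$ of $G$ into spanning $r$-edge-connected graphs such that $d_{G_1}(v)$ is divisible by $2k+2$ for every $v\in A$ and $d_{G_2}(v)$ is divisible by $2k+2$ for every $v\in B$. The second step is to apply Corollary~\ref{corollary:even-special-decomposition} to $G_1$ and to $G_2$. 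This is possible because $r\geq 32k$, so each $G_i$ is $32k$-edge-connected, and the divisibility hypothesis on the appropriate side holds by construction. We obtain an $(A,2,2k+2)$-fractional factorization $\mathcal{F}_1$ of $G_1$ and a $(B,2,2k+2)$-fractional factorization $\mathcal{F}_2$ of $G_2$; together these form a $(2,2k+2)$-bifactorization $\mathbb{F}=(\mathcal{F}_1,\mathcal{F}_2)$ of $G$.

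It remains to verify that $\mathbb{F}$ is strong. Since $\ell=2$ is even we have $p=2$, and the required degree lower bound on $v\in A_i$ inside $E_i=E(G_i)$ is $((2k+2)/2)((2k+2)/2+2)=(k+1)(k+3)$. But $G_i$ is $r$-edge-connected and $r\geq (k+1)(k+3)$ by the choice of $r$, so $d_{G_i}(v)\geq (k+1)(k+3)$ for every vertex $v$, in particular for every $v\in A_i$. Hence $\mathbb{F}$ is a strong $(2,2k+2)$-bifactorization of $G$.

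The only delicate point is really bookkeeping: one needs to make sure that the parameter $r$ was chosen large enough to satisfy two simultaneous constraints (the edge-connectivity required by Corollary~\ref{corollary:even-special-decomposition}, namely $32k$, and the minimum degree required for strongness, namely $(k+1)(k+3)$), and that the overall edge-connectivity hypothesis on $G$ is exactly what Lemma~\ref{lem:tight-good-initial-decomposition} demands with these parameters. There is no new combinatorial idea beyond the odd case; the adjustment is purely numerical, reflecting that in the even regime we pay an extra factor because Corollary~\ref{corollary:even-special-decomposition} itself requires $32k$-edge-connectivity (as it must peel off a single factor before invoking Lemma~\ref{lemma:1-special-decomposition}).
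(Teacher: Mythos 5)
Your proposal is correct and coincides with the paper's intended argument: the paper dispenses with this lemma in a single line, saying it follows by replacing Lemma~\ref{lemma:1-special-decomposition} with Corollary~\ref{corollary:even-special-decomposition} in the proof of Lemma~\ref{lemma:odd-strong-factorization}, which is precisely what you carry out. Your numerical bookkeeping (the edge-connectivity \(6(2k+2)+4r-4 = 2(6k+2r+4)\) for Lemma~\ref{lem:tight-good-initial-decomposition}, the two constraints \(r\geq 32k\) and \(r\geq (k+1)(k+3)\), and the strongness bound \((k+1)(k+3)\) for \(\ell=2\), \(p=2\)) is accurate.
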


% \begin{comment}
% \begin{proof}
% 	Let $k$, $\ell$ and $r$ be as in the statement. 
% 	Let \(G=(A,B;E)\) be a \(2(6k + 2r + 4)\)-edge-connected bipartite 
% graph such that \(|E|\) is divisible by \(2k+2\). 
% 	By Lemma~\ref{lem:tight-good-initial-decomposition} applied with 
% \(2k+2\) and \(r\), 
% 	\(G\) can be decomposed into two spanning edge-disjoint 
% \(r\)-edge-connected 
% 	graphs \(G_1\) and \(G_2\) such that all vertices of \(A\) have degree 
% divisible by \(2k+2\)
% 	in \(G_1\), and all vertices of \(B\) have degrees divisible by 
% \(2k+2\) 
% in \(G_2\). But since \(r \geq 
% 32k\), we can apply 
% Corollary~\ref{corollary:even-special-decomposition} with $2k+2$ and $\ell$, concluding 
% that \(G_1\) admits 
% 	an \((A,\ell,2k+2)\)-fractional factorization
% 	and \(G_2\) admits a \((B,\ell,2k+2)\)-fractional factorization. 
% Therefore \(G\) is 
% \((\ell,2k+2)\)-bifactorable.
	
% 	Since \(G_i\) is \(r\)-edge-connected, 
% 	we have \(d_{G_1}(v) \geq r \geq (k+1)(k+3)\) for every vertex \(v\in 
% A\),
% 	and \(d_{G_2}(v) \geq r \geq (k+1)(k+3)\) for every vertex \(v\in B\), 
% from where we conclude that \(G\) is 
% strongly \((\ell,2k+2)\)-bifactorable.
% \end{proof}
% \end{comment}

%%%%%%%%%%%%%%% section 5 %%%%%%%%%%%%%%%%%%%%%
	
\section{Decomposition into paths of odd length}~\label{section:odd-paths}

We present now a definition which is central to what follows.
%Before discuss the results of this section let us give one definition. 
Before that, we recall that given %a vanilla trail decomposition $\D$ 
an \(\ell\)-tracking decomposition \(\B\) of a graph $G$, 
$\B(v)$ denotes the number of edges of \(G\) incident to \(v\) that are 
starting edges of trackings in \(\B\) 
that start at \(v\), or ending edges of trackings in $\B$ that end at $v$.

\begin{definition}[Balanced tracking decomposition -- odd case]
Let \(k\) be a positive integer.
Let \(G = (A,B;E)\) be a bipartite graph 
that admits a \((1,2k+1)\)-bifactorization 
\(\mathbb{F}=(\mathcal{F}_1,\mathcal{F}_2)\),
and let \(G_i = G\big[\bigcup_{F\in\mathcal{F}_i}F\big]\) for \(i=1,2\).
Let \(M_1\) be the \((A,1,2k+1)\)-factor of \(\mathbb{F}\) and \(M_2\) be the
\((B,1,2k+1)\)-factor of \(\mathbb{F}\).
We say that %a decomposition \(\D\) of $G$ into vanilla trails of lenght \(2k+1\) 
a \((2k+1)\)-tracking decomposition \(\B\) of \(G\) is \emph{\(\mathbb{F}\)-balanced}
%is an \emph{\(\mathbb{F}\)-balanced decomposition} of \(G\) 
if the following holds.
	\begin{itemize}
		\item \(\B(v) = d_{G_1}(v)/(2k+1) + d_{M_2}(v)\) for every \(v\in A\);
		\item \(\B(v) = d_{G_2}(v)/(2k+1) + d_{M_1}(v)\) for every \(v\in B\).
	\end{itemize}
%Furthermore, if all vanilla trails of $\D$ are paths, then we say that $\D$ is an 
%\emph{\(\mathbb{F}\)-balanced path decomposition} of \(G\).
\end{definition}

Our aim in this section is to prove
Theorem~\ref{thm:odd-path-decomposition}, which states that one may obtain an
$\mathbb{F}$-balanced \((k+1)\)-complete
path tracking decomposition from a strong
$(1,2k+1)$-bifactorization~$\mathbb{F}$. The proof of
Theorem~\ref{thm:odd-path-decomposition} is by induction on $k$. The
base of the induction is precisely the statement of the next lemma, whose proof can be seen
in~Thomassen~\cite{Th08b} (we present it for completeness). 

% and goes as follows: the base of the induction is 
% proved in Lemma~\ref{lemma:3-PD}. Using the induction hypothesis we 
% are able to prove that \(G\) admits an \(\mathbb{F}\)-balanced 
% \((2k+1)\)-pre-complete $(2k+1)$-tracking decomposition. By 
% Lemma~\ref{lemma:odd-complete}, we obtain an \(\mathbb{F}\)-balanced 
% \((k+1)\)-complete $(2k+1)$-tracking decomposition. 
% We finish the proof by applying 
% Lemma~\ref{lemma:odd-path-dec}, obtaining an \(\mathbb{F}\)-balanced 
% \((k+1)\)-complete $(2k+1)$-path tracking decomposition.

% The proof of the next lemma can be seen in~Thomassen~\cite{Th08b},
% we present it for completeness.

\begin{lemma}\label{lemma:3-PD}
	Let \(G\) be a bipartite graph that admits a 
	\((1,3)\)-bifactorization \(\mathbb{F}\).
	Then \(G\) admits an \(\mathbb{F}\)-balanced $3$-path tracking decomposition.
\end{lemma}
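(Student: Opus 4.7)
The plan is to construct the decomposition by handling $G_1$ and $G_2$ separately, exploiting the Eulerian structure of $F_1$ and $F_2$. First I would simplify the balance condition: since $d_{M_1}(v) = d_{G_1}(v)/3$ for every $v \in A$, the requirement $\B(v) = d_{G_1}(v)/3 + d_{M_2}(v)$ becomes $\B(v) = d_{M_1}(v) + d_{M_2}(v)$, and an analogous identity holds at $v \in B$. Writing $M := M_1 \cup M_2$, it suffices to build a $3$-path decomposition of $G$ in which every vertex $v$ is an endpoint of exactly $d_M(v)$ paths.

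For $G_1$, I would fix an Eulerian orientation of $F_1$, so that every $v \in A$ has exactly $d_{G_1}(v)/3$ out-edges in $F_1$. At each $v \in A$, I would bijectively pair the $d_{M_1}(v)$ edges of $M_1$ at $v$ with these out-edges, producing a collection $\mathcal{P}_1$ of $|E(G_1)|/3$ $2$-paths of the form $bvb'$ with $v$ as middle, $bv \in M_1$ and $vb' \in F_1$. The unused portion $R_1$ of $F_1$ (the in-edges at $A$, equivalently the out-edges at $B$) has size $|E(G_1)|/3$. I would then extend each $2$-path in $\mathcal{P}_1$ by attaching exactly one edge of $R_1$ at one of its two $B$-endpoints; the balance forces every $R_1$-edge to be attached at its $B$-endpoint, so that its $A$-endpoint becomes the new $3$-path endpoint. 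This reduces to finding a perfect matching between $\mathcal{P}_1$ and $R_1$ in the auxiliary bipartite graph whose adjacency records a shared $B$-endpoint. A symmetric construction inside $G_2$ (pairing $M_2$ with the out-edges of $F_2$ at $B$-vertices and extending with the remaining in-edges of $F_2$ at $B$-vertices) produces a $3$-path decomposition of $G_2$. Combining the two decompositions and counting endpoints locally, at each $v \in A$ the contribution from $G_1$-paths is exactly $d_{M_1}(v)$ (the new $A$-endpoint from the attached $R_1$-edge) and from $G_2$-paths exactly $d_{M_2}(v)$ (the unattached $A$-endpoint of the original $\mathcal{P}_2$-$2$-path), so $\B(v) = d_M(v)$; the argument at $v \in B$ is symmetric.

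The main obstacle is the extension step: one must both establish the existence of the perfect matching between $\mathcal{P}_1$ and $R_1$ (and its analogue for $G_2$) and ensure that every resulting $3$-path is a genuine path, i.e., has four distinct vertices. Both issues are handled by following an Eulerian circuit of $F_1$ and reading off the pairings directly along consecutive edges of the circuit, which automatically satisfies Hall's condition and avoids closed trails. This is where the Eulerian structure of $F_1$ (respectively $F_2$) plays its essential role, and is the technical heart of Thomassen's proof in~\cite{Th08b}.
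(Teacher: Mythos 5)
Your proposal is correct and is essentially the paper's construction: decompose $G[F_1]$ (resp. $G[F_2]$) into $2$-paths via its Eulerian structure, append one $M_1$- (resp. $M_2$-) edge to each, and verify that the endpoint counts give $\B(v)=d_{M_1}(v)+d_{M_2}(v)$. The paper's version is a bit leaner: it pairs consecutive edges of an Eulerian circuit of $G[F_1]$ to form $2$-paths $a_ib_ia_{i+1}$ with both ends in $A$, and then appends an $M_1$-edge at the start vertex; since each $v\in A$ starts exactly $d_{F_1}(v)/2=d_{M_1}(v)$ of these $2$-paths, the pairing is an immediate local bijection and no matching lemma is invoked. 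In your version the extension step you flag as the technical heart is actually just as immediate: at each $b\in B$ the number of $2$-paths $bvb'$ whose $F_1$-edge enters $b'=b$ equals the number of $R_1$-edges leaving $b$ (both are $d_{F_1}(b)/2$), so you can pair them locally and arbitrarily with no appeal to Hall; moreover the resulting $4$-vertex trails are genuine paths automatically, because $M_1\cap F_1=\emptyset$, $G$ is simple, and $G$ is bipartite rule out any coincidence of vertices — tracing the Eulerian circuit is a perfectly good way to realize these pairings, but it is not what makes them exist.
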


\begin{proof}
	Let \(G = (A,B;E)\) be a bipartite graph that admits a 
	\((1,3)\)-bifactorization \(\mathbb{F}= (\mathcal{F}_1,\mathcal{F}_2)\) 
and put \(\mathcal{F}_i = \{M_i,F_i\}\) for \(i = 1,2\).
%	Let \(O_{F_i}\) be an Eulerian Orientation of \(F_i\), for \(i = 1,2\).
%	Let \(\vec G[M_1]\) be a digraph obtained from \(M_1\) by orienting 
%	its edges from \(B\)
%	to \(A\),
%	and let \(\vec G[M_2]\) be a digraph obtained from \(M_2\) by orienting
%	its edges \(A\) to \(B\).
	Let \(\mathcal{C}_1\) be the set of components of \(G[F_1]\).
	Let \(T\) be an element of \(\mathcal{C}_1\)
	and \(B_T = a_0b_0a_1b_1\cdots a_sb_sa_0\) be a tracking of \(T\),
	where \(a_i \in A\) and \(b_i \in B\), for \(1\leq i\leq s\).
	We have that \(\B'_T = \{a_ib_ia_{i+1}\colon 0\leq i\leq s\}\), taking \(a_{s+1} = a_0\),
	is a \(2\)-tracking decomposition of \(T\)
	in which every tracking has its end-vertices in \(A\).
	Therefore, \(\B'_1 = \cup_{T \in \mathcal{C}_1}  \B'_T\)
	is a \(2\)-tracking decomposition of \(G[F_1]\)
	in which every tracking has its end-vertices in \(A\).
%	Since \(O_{F_1}\) is an Eulerian orientation and \(G\) is bipartite, 
%	\(G[F_1]\) admits a decomposition \(\D'_1\)
%	into directed \(2\)-paths such that every path of \(\D'_1\)
%	has its end-vertices in \(A\).
	Analogously, \(G[F_2]\) admits a \(2\)-tracking decomposition \(\B'_2\)
	in which every tracking has its end-vertices in \(B\).
%	Analogously, \(G[F_2]\) admits a decomposition \(\D'_2\)
%	into directed \(2\)-paths such that every path of \(\D'_2\)
%	has its end-vertices in \(B\).
%	If a path \(abc\) of \(\D'_i\) is such that \(ab\) is directed
%	from \(a\) to \(b\), and \(bc\) is directed from \(b\) to \(c\),
%	we say that \(abc\) \emph{starts} at \(a\).
	
	Let \(G_i = G[M_i\cup F_i]\) for \(i = 1,2\).
	Note that, since \(M_1\) is an \((A,1,3)\)-factor and \(F_1\) is an 
\((A,2,3)\)-factor
	of \(G_1\), we have \(d_{F_1}(v) = (2/3)d_{G_1}(v) = 2d_{M_1}(v)\),
	for every vertex \(v\) in \(A\).
	Note that the number of trackings in \(\B'_1\) that end at a vertex \(v\) 
	equals \(\frac{1}{2}d_{F_1}(v) = d_{M_1}(v)\).
%	
%	Since \(O_{F_1}\) is an Eulerian orientation, 
%	we have \(d_{O_{F_1}}^+(v) = d_{O_{F_1}}^-(v) = (1/3)d_{G_1}(v) = d_{M_1}(v)\)
%	for every vertex \(v\) in \(A\).
%	In other words, for every \(v\in A\), the number of edges of \(M_1\) incident to \(v\)
%	is equal to the number of paths of \(\D'_1\) starting at \(v\).
	Thus, we can extend each tracking \(B\) of \(\B'_1\) by adding 
	an edge of \(M_1\) to the start vertex of \(B\),
	obtaining a \(3\)-path tracking decomposition \(\B_1\) of \(G_1\).
	Analogously, we can extend each tracking \(T\) of \(\B'_2\) by adding
	an edge of \(M_2\) to the starting vertex of \(T\),
	obtaining a \(3\)-path tracking decomposition \(\B_2\) of \(G_2\).

	Put \(\B = \B_1\cup\B_2\). 
	If \(v\) is a vertex of \(A\), then the number of paths having \(v\) as end-vertex
	is exactly \(d_{F_1}(v)/2 + d_{M_2}(v)\).
%	Since there is no edge of \(M_2\) incident to vertices of \(A\),
	Therefore,
	we have \(\B(v) =  d_{F_1}(v)/2 +  d_{M_2}(v)  = d_{G_1}(v)/3 + d_{M_2}(v)\).
	Analogously, we have \(\B(v) = d_{G_2}(v)/3 + d_{M_1}(v)\)
	for every vertex \(v\) in \(B\).
	Thus, \(\B\) is an \(\mathbb{F}\)-balanced $3$-path tracking decomposition of \(G\).
	
\end{proof}

%Let $\mathbb{F}$ be a \((1,2k+1)\)-bifactorization of a 
%bipartite graph $G$. In the next result we prove that it is possible to obtain 
%an \(\mathbb{F}\)-balanced \((k+1)\)-complete $(2k+1)$-tracking decomposition 
%from an \(\mathbb{F}\)-balanced \((2k+1)\)-pre-complete $(2k+1)$-tracking decomposition.

In the next lemma, we show how pre-completeness is related to completeness
of odd tracking decompositions.

\begin{lemma}\label{lemma:odd-complete}
	Let \(k\) be a positive integer and let 
	\(G\) be a bipartite graph that admits a  
	\((1,2k+1)\)-bifactorization \(\mathbb{F}\).
	If \(G\) admits an \(\mathbb{F}\)-balanced 
\((2k+1)\)-pre-complete
	$(2k+1)$-tracking decomposition, 
	then \(G\) admits an \(\mathbb{F}\)-balanced \((k+1)\)-complete 
	$(2k+1)$-tracking decomposition.
\end{lemma}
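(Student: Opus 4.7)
The plan is to prove Lemma~\ref{lemma:odd-complete} by modifying the given $\mathbb{F}$-balanced $(2k+1)$-pre-complete tracking decomposition $\B$ via a sequence of edge swaps that convert non-hanging pre-hanging edges into hanging ones. The starting observation is that an edge $uv$ that is pre-hanging at $v$ in a tracking $B = u v x_2 \cdots x_{2k+1}$ fails to be hanging exactly when $u$ coincides with some internal vertex $x_{2j}$ for some $j \geq 2$; in the bipartite setting this means $\bar B$ contains an even cycle of length $2j \in \{4, 6, \ldots, 2k\}$ at $u$. So the non-hanging pre-hanging edges at $v$ are in bijection with trackings of $\B$ having a cycle at their outer end whose relevant cycle-neighbor is $v$.

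For each such bad configuration, I would use the pre-completeness at $u$: since $\prehang(u,\B) > 2k+1$ and $\bar B$ contains at most $2k+1$ vertices, a pigeonhole argument yields a pre-hanging edge $uw$ in some tracking $B'$ with $w \notin V(\bar B)$. I would then perform a swap between $B$ and $B'$ that reroutes the starting segment of $B$ through $uw$ instead of $uv$ (while absorbing the old cycle of $\bar B$ into the structure of $B'$), designed so that $u$ becomes a degree-$1$ vertex of the new trail and the resulting $B^{\mathrm{new}}, B'^{\mathrm{new}}$ are still vanilla $(2k+1)$-trackings. Crucially, the swap only exchanges \emph{inner-end} edges, so the outer-end counts $\B(v) = d_{M_1}(v) + d_{M_2}(v)$ are preserved at every vertex, maintaining $\mathbb{F}$-balance.

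The main verifications are: (i) $\mathbb{F}$-balance is preserved, as just noted; (ii) pre-completeness is maintained throughout, using the slack between the hypothesized $\prehang(v,\B) > 2k+1$ and the much weaker target $\hang(v,\B') > k+1$; and (iii) the process terminates, controlled by a potential function such as $\sum_{B\in\B}\tau(B)$ weighted by cycle length. The main obstacle will be the precise definition of the swap so that it strictly reduces the bad-edge count at $v$ without creating more than a bounded amount of new badness at $w$: this is where I expect the assumption of pre-completeness with slack $2k+1$ (rather than only $k+1$) to be decisive, since it lets us always find a ``fresh'' helper vertex $w$ outside $V(\bar B)$. After the process stabilizes, for every vertex $v$ the number of non-hanging pre-hanging edges is at most $k$, and combining with $\prehang(v,\B') \geq 2k+2$ gives $\hang(v,\B') \geq (2k+2) - k = k+2 > k+1$, establishing $(k+1)$-completeness.
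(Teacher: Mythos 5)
Your high-level plan (swap pre-hanging edges to promote them to hanging, while preserving $\mathbb{F}$-balance and pre-completeness) is the same as the paper's, but the swap you propose is not well-defined and the pigeonhole is applied at the wrong vertex. You pigeonhole at $u$ (the degree-$>1$ end-vertex) and look for a pre-hanging edge $uw$ in some other tracking $B'$ with $w\notin V(\bar B)$, then want to ``reroute the starting segment of $B$ through $uw$ instead of $uv$.'' But removing $uv$ from $\bar B$ does not make $u$ a degree-$1$ vertex: since $u$ reappears as an internal vertex $x_{2j}$, after deleting $uv$ the vertex $u$ still has degree $2$ in $\bar B-uv$, and adding $uw$ gives $u$ degree $3$ — the result is no longer a trail, let alone a vanilla $(2k+1)$-trail. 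There is no obvious way to ``absorb the old cycle into $B'$'' that keeps both new objects vanilla trackings of length $2k+1$ and only exchanges pre-hanging edges.

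The paper instead pigeonholes at $v$. Since $\hang(v,\B)\le k+1$ and $\prehang(v,\B)\ge 2k+2$, there are $\ge k+1$ pre-hanging-but-not-hanging edges $x_1v,\dots,x_{k+1}v$ at $v$. Let $T_1$ be the tracking containing $x_1v$. A counting argument (here you need the observation that the vertex after $v$ in $T_1$ cannot be any $x_i$, since the edge $vy_2$ is interior, plus the fact that $T_1$ has only $k+1$ vertices in the colour class of $v$'s neighbours) shows some $x_i\notin V(T_1)$. One then swaps the two starting edges $x_1v$ and $x_iv$ between the trackings $T_1$ and $T_i$: since both are pre-hanging \emph{at the same vertex $v$}, the two new trackings $T_1'=x_iv\cdots$ and $T_i'=x_1v\cdots$ are automatically vanilla $(2k+1)$-trackings, and $x_iv$ becomes hanging in $T_1'$ because $d_{\bar T_1'}(x_i)=1$. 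This is the crucial structural point missing from your sketch. Finally, the paper avoids your hand-wavy ``potential function'' termination argument altogether by choosing $\B$ to maximize $\sum_v\hang(v,\B)$ among all $\mathbb{F}$-balanced $(2k+1)$-pre-complete tracking decompositions; the swap yields a strictly larger value, giving an immediate contradiction. You would need to fill in the swap with the at-$v$ version and replace the potential-function argument with this extremal choice (or give an actual monovariant) for the proposal to go through.
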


\begin{proof}
	Let \(k\) be a positive integer and let \(G = (A,B;E)\) be a bipartite 
graph that admits a 
	\((1,2k+1)\)-bifactorization \(\mathbb{F}\). Let
$\mathcal{F}$ be the set of all $\mathbb{F}$-balanced 
	\((2k+1)\)-pre-complete $(2k+1)$-tracking decompositions of $G$. 
	Now let $\B$ be a tracking decomposition in $\mathcal{F}$ such that \(\sum_{v\in V(G)} 
\hang(v,\B)\) is maximum over all tracking decompositions in $\mathcal{F}$.
	We claim that \(\B\) is a \((k+1)\)-complete tracking decomposition,
	i.e,  \(\hang(v,\B) > k+1\) for each vertex \(v\) of~\(G\).

	Suppose, for a contradiction, that \(\B\) is not \((k+1)\)-complete.
	Then there is a vertex \(v\) of \(G\) such that \(\hang(v,\B) \leq k+1\).
	Suppose, without loss of generality, that \(v\) is a vertex of \(A\).
	Since \(\B\) is \((2k+1)\)-pre-complete, \(\prehang(v,\B) \geq 2k+2\).
	Thus, there are at least \(k+1\) pre-hanging edges at \(v\) 
	that are not hanging edges at \(v\), say \(x_1v,\ldots,x_{k+1}v\).
	Let \(T_1 = y_0y_1\cdots y_{2k+1}\) be the element of \(\B\) that contains \(x_1v\),
	where, without loss of generality, \(y_0 = x_1\) and \(y_1 = v\).
	The vertices of \(T_1\) in \(B\) are \(y_0,y_2,\ldots, y_{2k}\). 
	First, we will show that $x_i\notin V(T_1)$ for some \(2\leq i \leq k+1\). 
%\(i \in \{2,\ldots,k+1\}\).
	Since \(y_2\) is in \(B\) and \(y_0\) is the end-vertex of \(T_1\) in 
\(B\),
	the vertex \(y_2\) is not an end-vertex of \(T_1\). 
	Since \(y_1y_2 \in E(T_1)\), the edge \(y_1y_2\) is not a pre-hanging 
edge at \(v\).
	Then, \(y_2 \neq x_i\) for every \(2\leq i\leq k+1\).
	Therefore, \(|\{y_4, y_6,\ldots,y_{2k}\}| = k-1\) and 
\(|\{x_2,\ldots,x_{k+1}\}| = k\), from where we conclude that
	for at least one \(i\), we have \(x_i \notin V(T_1)\).
	
	Now let \(T_i = z_0z_1\cdots z_{2k+1}\) be the element of \(\B\) that contains \(x_iv\).
	Suppose, without loss of generality, that \(z_1 = v\) and \(z_0 = x_i\).
	Let \(T'_1 = vy_1\cdots y_{2k+1}\) and \(T'_i = y_0z_1\cdots z_{2k+1}\),
	and let \(\B' = \B - T_1 - T_i + T'_1 + T'_i\).
	We note that \(\bar T'_1 = \bar T_1 - x_1v + x_iv\) 
	and \(\bar T'_i = \bar T_i -x_iv + x_1v\).
	Since \(x_i \notin V(T_1)\), we have \(d_{T'_1}(x_i) = 1\), which 
	implies that \(x_iv\) is a hanging edge of \(\B'\) at~$v$. Therefore, 
	\(\sum_{v\in V(G)} \hang(v,\B') > \sum_{v\in V(G)} \hang(v,\B)\).

	We claim that \(\B'\) is \(\mathbb{F}\)-balanced.
	Indeed, since the set of pre-hanging edges at \(v\) is the same in \(\B\) 
	and \(\B'\), we have \(\B(v) = \B'(v)\) for every \(v\) in \(V(G)\).
	Therefore, \(\B'\) is an \hbox{\(\mathbb{F}\)-balanced} \((2k+1)\)-pre-complete
	tracking decomposition such that 
	\(\sum_{v\in V(G)} \hang(v,\B') > \sum_{v\in V(G)} \hang(v,\B)\),	
	a contradiction to the choice of \(\B\).
	\end{proof}

%Let $G$ be a bipartite graph and let \(\mathbb{F}\) 
%be a \((1,2k+1)\)-bifactorization of $G$. 
Now we are ready to use the Disentangling Lemma 
(Lemma~\ref{lemma:disentangling})
to obtain a path decomposition from the previous tracking decomposition.
% to show 
%that it is possible to obtain an
%\(\mathbb{F}\)-balanced \((k+1)\)-complete $(2k+1)$-path tracking decomposition of $G$ 
%from an 
%\(\mathbb{F}\)-balanced \((k+1)\)-complete $(2k+1)$-tracking decomposition of $G$.

\begin{lemma}\label{lemma:odd-path-dec}
	Let \(k\) be a positive integer and let \(G\) be a bipartite graph that 
admits a 
	\((1,2k+1)\)-bifactorization \(\mathbb{F}\).
	If \(G\) admits an \(\mathbb{F}\)-balanced \((k+1)\)-complete 
	$(2k+1)$-tracking decomposition, 
	then \(G\) admits an \(\mathbb{F}\)-balanced \((k+1)\)-complete 
	$(2k+1)$-path tracking decomposition.
\end{lemma}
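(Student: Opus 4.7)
The plan is to deduce this lemma as a direct corollary of the Disentangling Lemma (Lemma~\ref{lemma:disentangling}). Set $\ell = 2k+1$ and take the completeness parameter to be $k+1$. Then the compatibility condition in the Disentangling Lemma,
\[
k+1 \;\geq\; \lceil (\ell+1)/2 \rceil \;=\; \lceil (2k+2)/2 \rceil \;=\; k+1,
\]
is satisfied with equality. Thus, given the $\mathbb{F}$-balanced $(k+1)$-complete $(2k+1)$-tracking decomposition $\B$ supplied by the hypothesis, Lemma~\ref{lemma:disentangling} (applied with parameters $k+1$ and $2k+1$) produces a $(k+1)$-complete $(2k+1)$-path tracking decomposition $\B'$ of $G$ such that $\B'(v) = \B(v)$ for every vertex $v$ of $G$.

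It then remains to verify that $\B'$ is $\mathbb{F}$-balanced. This is immediate: the definition of \emph{$\mathbb{F}$-balanced} tracking decomposition depends only on the values $\B(v)$ for $v \in V(G)$ (namely $\B(v) = d_{G_1}(v)/(2k+1) + d_{M_2}(v)$ if $v \in A$, and $\B(v) = d_{G_2}(v)/(2k+1) + d_{M_1}(v)$ if $v \in B$), and the Disentangling Lemma guarantees that these values are preserved when passing from $\B$ to $\B'$. So the two defining equalities for $\B$ carry over verbatim to $\B'$, which establishes the conclusion.

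There is no real obstacle in this step; the whole content of the lemma is the observation that the numerical parameters match up so that the Disentangling Lemma applies with the right completeness threshold, and that the $\mathbb{F}$-balanced condition is phrased entirely in terms of the quantity $\B(\cdot)$ which the Disentangling Lemma preserves. The substantive work was already done in Section~\ref{section:disentangling}.
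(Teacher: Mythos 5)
Your proof is correct and follows exactly the same route as the paper: apply the Disentangling Lemma with $\ell = 2k+1$ and completeness parameter $k+1$ (noting $k+1 = \lceil(\ell+1)/2\rceil$), and observe that since $\B'(v) = \B(v)$ is preserved, the $\mathbb{F}$-balanced condition carries over. Your write-up is slightly more explicit about the parameter check than the paper's, but the argument is the same.
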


\begin{proof}
	Let \(k\) be a positive integer and let \(G = (A,B;E)\) be a bipartite 
graph that admits a \((1,2k+1)\)-bifactorization \(\mathbb{F}\).
	Suppose \(G\) admits an \(\mathbb{F}\)-balanced 
\((k+1)\)-complete
	$(2k+1)$-tracking decomposition \(\B\).
	By Lemma~\ref{lemma:disentangling} (applied with \(\ell = 2k+1\)),
	the graph \(G\) admits a \((k+1)\)-complete \((2k+1)\)-path tracking decomposition \(\B'\)
	%into \((2k+1)\)-paths 
	such that \(\B'(v) = \B(v)\) for every vertex \(v\) of \(G\). 
	Therefore, \(\B'\) is an \(\mathbb{F}\)-balanced 
	\((k+1)\)-complete $(2k+1)$-path tracking decomposition.
\end{proof}

Now we have the tools needed for the proof of the next result, which is the 
main result of this section.

\begin{theorem}\label{thm:odd-path-decomposition}
	Let \(k\) be a positive integer. If \(G\) is a bipartite graph that 
admits a strong  \((1,2k+1)\)-bifactorization \(\mathbb{F}\),
	then \(G\) admits an \(\mathbb{F}\)-balanced $(2k+1)$-path tracking decomposition.
\end{theorem}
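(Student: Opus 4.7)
The plan is to prove Theorem~\ref{thm:odd-path-decomposition} by induction on $k$. The base case $k=1$ is immediate from Lemma~\ref{lemma:3-PD}, since a strong $(1,3)$-bifactorization is in particular a $(1,3)$-bifactorization. For the inductive step, I assume the statement holds for $k-1$ and let $\mathbb{F} = (\mathcal{F}_1, \mathcal{F}_2)$ be a strong $(1, 2k+1)$-bifactorization of $G = (A,B;E)$, writing $\mathcal{F}_i = \{M_i, F_1^i, \ldots, F_k^i\}$ and $G_i = G\big[\bigcup_{F\in\mathcal{F}_i} F\big]$.

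First, I would reduce to the induction hypothesis by removing one Eulerian factor from each side. Set $G^\circ = G - F_k^1 - F_k^2$ and $\mathcal{F}_i^\circ = \mathcal{F}_i \setminus \{F_k^i\}$. Since $d_{G_i^\circ}(v) = \frac{2k-1}{2k+1}\, d_{G_i}(v)$ for every $v\in A_i$, the matching $M_i$ becomes an $(A_i,1,2k-1)$-factor and each surviving $F_j^i$ becomes an $(A_i,2,2k-1)$-factor of $G_i^\circ$, so $\mathbb{F}^\circ = (\mathcal{F}_1^\circ, \mathcal{F}_2^\circ)$ is a $(1,2k-1)$-bifactorization of $G^\circ$. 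Strongness is inherited, since $\frac{2k-1}{2k+1}(2k+1)(2k+2) = (2k-1)(2k+2) \geq (2k-1)(2k)$. Applying the induction hypothesis produces an $\mathbb{F}^\circ$-balanced $(2k-1)$-path tracking decomposition $\mathcal{P}$ of $G^\circ$, and a direct computation gives $\mathcal{P}(v) = d_{M_1}(v)+d_{M_2}(v)$ at every vertex, which is precisely the target $\mathbb{F}$-balance for a $(2k+1)$-tracking decomposition of $G$.

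Next, I would extend each path of $\mathcal{P}$ to a $(2k+1)$-vanilla-trail using the edges of $F_k^1\cup F_k^2$. Following the idea of Lemma~\ref{lemma:3-PD}, decompose each component of $G[F_k^1]$ via an Euler circuit and cut it into 2-paths with both endpoints in $A$, and likewise decompose $F_k^2$ into 2-paths with endpoints in $B$. A counting check gives that the total number $|F_k^1|/2 + |F_k^2|/2 = |M_1|+|M_2|$ of 2-paths equals the total number of paths in $\mathcal{P}$, so a bijective assignment is feasible in principle. The plan is to attach each 2-path from the $A$-side decomposition to a $(2k-1)$-path having a matching $A$-endpoint (and symmetrically for $B$), choosing both the pairing in the Euler tours and the side of attachment so that the local $\mathbb{F}$-balance condition $\B(v) = d_{M_1}(v)+d_{M_2}(v)$ is preserved. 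Since the strong bifactorization hypothesis gives $d_{G_i}(v) \geq (2k+1)(2k+2)$, the resulting $\mathbb{F}$-balanced $(2k+1)$-tracking decomposition $\B$ will also be $(2k+1)$-pre-complete.

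Finally, I would apply Lemma~\ref{lemma:odd-complete} to $\B$ to obtain an $\mathbb{F}$-balanced $(k+1)$-complete $(2k+1)$-tracking decomposition, and then Lemma~\ref{lemma:odd-path-dec}, which internally invokes the Disentangling Lemma, to replace it by an $\mathbb{F}$-balanced $(k+1)$-complete $(2k+1)$-\emph{path} tracking decomposition, completing the induction. The main obstacle I anticipate is the balanced extension in the previous step: choosing the Euler-tour pairings, the orientation of each 2-path, and the assignment to $(2k-1)$-paths so that every edge of $F_k^1\cup F_k^2$ is used exactly once, every resulting trail is vanilla, and the number of trails with $v$ as an endpoint agrees with $d_{M_1}(v)+d_{M_2}(v)$ simultaneously at all vertices. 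This is essentially a degree-constrained orientation/matching problem whose feasibility must be derived by exploiting the fine structure of the Eulerian factors together with the precomputed balance of $\mathcal{P}$, and verifying $(2k+1)$-pre-completeness via the strong-bifactorization degree bound.
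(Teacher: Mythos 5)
Your base case and the final step (applying Lemma~\ref{lemma:odd-complete} and Lemma~\ref{lemma:odd-path-dec}) match the paper, and the overall induction scheme is the right one. But the inductive step deviates from the paper in a way that introduces two real gaps.

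The paper removes $M_1$, $M_2$ \emph{together with} the ``forward'' halves $F_{1,k}^{\forw}$, $F_{2,k}^{\forw}$ of the last Eulerian factors (taken from an Eulerian orientation), applies induction to $G' = G - M_1 - M_2 - F_{1,k}^{\forw} - F_{2,k}^{\forw}$, and then grows each $(2k-1)$-path of $\B'$ by \emph{one edge at each end}, taken from the set $S_v$ of removed edges incident to each end-vertex $v$. Because the added edges are pendant at the two ends, the inner vertices of the resulting $(2k+1)$-trail are exactly the vertices of the original path, so the vanilla property is \emph{automatic}. And because the Eulerian orientation ensures $|S_v| = \B'(v)$ at every vertex, the edges at each end can be attached greedily with no global matching problem; the balance condition then falls out. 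You instead remove the whole $F_k^1$, $F_k^2$ (keeping $M_1, M_2$ in $G^\circ$) and attach a full $2$-path to one end of each $(2k-1)$-path. This creates two problems. First, when you graft $a_1 b a_2$ onto a $(2k-1)$-path $P$ at its end-vertex $a_2$, the vertex $b$ becomes an \emph{internal} vertex of the $(2k+1)$-trail, so for the trail to be vanilla you need $b \notin V(P)$; nothing in your construction controls this, and the Euler-tour flexibility is not clearly enough to enforce it simultaneously for all paths. Second, the ``bijective assignment'' you invoke is a nontrivial degree-constrained orientation/matching problem: at each $v \in A$ you must send exactly $d_{M_1}(v)$ of the $\mathcal{P}(v) = d_{M_1}(v) + d_{M_2}(v)$ paths an $A$-side $2$-path at $v$ and the remaining $d_{M_2}(v)$ a $B$-side $2$-path at their $B$-end, while also respecting the structure of the Euler-tour $2$-path decomposition; you acknowledge this as ``the main obstacle'' but do not supply an argument that it is feasible, and the global-parity count you give is not sufficient. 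In short, the paper's choice of what to remove is precisely what makes both the vanilla property and the balance come for free; by removing a different set of edges you lose that, and your proposal does not close the resulting gap.
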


\begin{proof}
The proof is by induction on \(k\).
By Lemma~\ref{lemma:3-PD}, the statement is true for \(k=1\).
Thus, suppose \(k>1\),
and let \(G = (A_1,A_2;E)\) be a bipartite graph that admits a strong  
\((1,2k+1)\)-bifactorization \(\mathbb{F} = (\mathcal{F}_1,\mathcal{F}_2)\).
We claim that \(G\) admits an \(\mathbb{F}\)-balanced 
\((2k+1)\)-pre-complete 
$(2k+1)$-tracking decomposition.
Let \(\mathcal{F}_1 = \{M_1,F_{1,1},\ldots,F_{1,k}\}\) 
and \(\mathcal{F}_2 = \{M_2,F_{2,2},\ldots,F_{2,k}\}\),
and let \(G_i = G\big[\bigcup_{F\in\mathcal{F}_i} F\big]\) for \(i = 1,2\).
Hereafter, fix $i \in \{1,2\}$.

Define \(d^*(v) = 
d_{G_i}(v)/(2k+1)\) for every vertex \(v\) in \(A_i\).
Note that \(d_{F_{i,j}}(v) = 2d^*(v) = 2d_{M_i}(v)\) for every vertex \(v\) in 
\(A_i\) and \(1\leq j\leq k\). 
Let \(O_{F_{i,k}}\) be a Eulerian 
orientation of \(F_{i,k}\).
Let \(F_{i,k} = F_{i,k}^{\forw} \cup F_{i,k}^{\back}\), where 
\(F_{i,k}^{\forw}\) is the set of 
edges
of \(F_{i,k}\) leaving \(A_i\) in \(O_{F_{i,k}}\),
and \(F_{i,k}^{\back}\) is the set of edges of \(F_{i,k}\) entering
\(A_i\)
in \(O_{F_{i,k}}\).
Note that, since \(O_{F_{i,k}}\) is an Eulerian orientation,
\(d_{F_{i,k}^{\forw}}(v) = d_{F_{i,k}^{\back}}(v) = d_{F_{i,k}}(v)/2\) for 
every 
vertex \(v\in A_i\).

Let \(G' = G - M_1 - M_2 - F_{1,k}^{\forw} - F_{2,k}^{\forw}\),
and define \(\mathcal{F}'_i = \{F_{i,k}^{\back},F_{i,1},\ldots, F_{i,k-1}\}\).
Let \(G'_i = G\big[\bigcup_{F\in\mathcal{F}'_i} F\big]\).
Note that \(G'_i = G_i - M_i - F_{i,k}^{\forw}\).
Then, for every vertex \(v\in A_i\), we have
\begin{equation}\label{odd-d}
d_{G'_i}(v) = d_{G_i}(v) - 2d^*(v) 
	    = (2k+1)d^*(v) - 2d^*(v) 
	    = (2k-1)d^*(v).
\end{equation}

\begin{claim}
\(\mathbb{F}' = (\mathcal{F}'_1,\mathcal{F}'_2)\) is a strong 
\((1,2k-1)\)-bifactorization of \(G'\).
\end{claim}

\begin{proof}

To prove this claim, we shall prove the following.
\begin{itemize}
	\item[(i)] \(F_{i,k}^{\back}\) is an \((A_i,1,2k-1)\)-factor of 
\(G'_i\);
	\item[(ii)] \(F_{i,j}\) is an Eulerian \((A_i,2,2k-1)\)-factor 
			of \(G'_i\) for \(1\leq j\leq k-1\);
	\item[(iii)] \(d_{G'_i}(v) \geq (2k-1)(2k)\) for every vertex \(v\in 
A_i\).
\end{itemize}

To prove items (i) and (ii), first 
note that, for every $v\in A_i$, we have \(d_{F_{i,k}^{\back}}(v) = 
d^*(v)\) and \(d_{F_{i,j}}(v) = 2d^*(v)\) for every \(1\leq 
j\leq k-1\). By~\eqref{odd-d}, we conclude that \(F_{i,k}^{\back}\) is a 
\((A_i,1,2k-1)\)-factor of 
\(G'_i\) and \(F_{i,j}\) is an \((A_i,2,2k-1)\)-factor of \(G'_i\). 
Since \(\mathbb{F}\) is a \((1,2k+1)\)-bifactorization,
\(F_{1,j}\) and \(F_{2,j}\) are Eulerian graphs for 
\(1\leq j\leq k-1\).

It remains to prove item (iii).
Since \(d_{G'_i}(v) = (2k-1)d^*(v)\) and \(d^*(v) = d_{G_i}(v)/(2k+1)\) for 
every vertex \(v\in A_i\),
we have \(d_{G'_i}(v) = \frac{2k-1}{2k+1}d_{G_i}(v)\) for every \(v\in A_i\).
Since \(\mathbb{F}\) is a strong \((1,2k+1)\)-bifactorization, 
\(d_{G_i}(v) \geq (2k+1)(2k+2)\) for every \(v\in A_i\).
Therefore, \(d_{G'_i}(v) \geq (2k-1)(2k+2) > (2k-1)2k\) for every \(v\in 
A_i\).
\end{proof}

Since 
\(\mathbb{F}'\) is a strong \((1,2k-1)\)-bifactorization of \(G'\),
by the induction hypothesis, 
\(G'\) admits an \(\mathbb{F}'\)-balanced $(2k-1)$-path 
tracking decomposition \(\B'\).

Since \(\B'\) is a \(\mathbb{F}'\)-balanced $(2k-1)$-path tracking decomposition, we have
	\begin{itemize}
		\item \(\B'(v) = d_{G'_1}(v)/(2k-1) + d_{F_{2,k}^{\back}}(v)\) 
for every 
\(v\in A_1\);
		\item \(\B'(v) = d_{G'_2}(v)/(2k-1) + d_{F_{1,k}^{\back}}(v)\) 
for every 
\(v\in A_2\).
	\end{itemize}
%Recall that since $\B'$ is a path tracking decomposition, $\B'(v)$ denotes the number of 
%trackings $B$ in $\B'$ such that $v$ is an end-vertex of $B$.
Now we want to extend each tracking in \(\B'\) to obtain 
%a decomposition of $G$ into vanilla \((2k-1)\)-trails. 
a \((2k+1)\)-tracking decomposition of \(G\).
For that, we add edges from  
\(E(G) - E(G')\) at the end-vertices of the trackings in $\B'$.
For each \(v\in A_1\) (\(v\in A_2\)), let \(S_v\) be the set of edges of 
\(M_1\cup F_{2,k}^{\forw}\) (\(M_2\cup F_{1,k}^{\forw}\)) that are incident 
to \(v\).
Note that for each edge \(e\) in \(E(G) - E(G')\) 
there is exactly one \(v\in V(G)\) such that \(e\in S_v\).
Then, \(\bigcup_{v\in V(G)} S_v = E(G) - 
E(G')\).
Therefore, if we prove that \(\B'(v) = |S_v|\), then we can extend every 
tracking $B$ in $\B'$ by adding one edge at each end-vertex of $B$.

\begin{claim}
	\(\B'(v) = |S_v|\), for every $v\in V(G)$.
\end{claim}

\begin{proof}
First, note that, since \(F_{i,k}\) is Eulerian, we have 
\(d_{F_{2,k}^{\back}}(v) = 
d_{F_{2,k}^{\forw}}(v)\)
for every \(v\in A_1\), and \(d_{F_{1,k}^{\back}}(v) = 
d_{F_{1,k}^{\forw}}(v)\)
for every \(v\in A_2\).

For every \(v\in A_i\) and every \(1\leq j\leq k-1\), 
we have \(d_{G'_i}(v)/(2k-1) = d_{F_{i,j}}(v)/2 = d^*(v)\).
Now, recall that for each \(v\in A_i\), we have \(d_{M_i}(v) = 
d^*(v)\). Therefore, for every \(v\in A_1\), we have 
% \begin{align*}
% 	 \B'(v) & = d_{G'_1}(v)/(2k-1) + d_{F_{2,k}^{\back}}(v) \\
% 			& = d^*(v) + d_{F_{2,k}^{\forw}}(v) \\
% 			& = d_{M_1}(v)+ d_{F_{2,k}^{\forw}}(v) \\
% 			& = |S_v|.
% \end{align*}
%
$$\B'(v) = d_{G'_1}(v)/(2k-1) + d_{F_{2,k}^{\back}}(v) 
			 = d^*(v) + d_{F_{2,k}^{\forw}}(v) 
			 = d_{M_1}(v)+ d_{F_{2,k}^{\forw}}(v) 
			 = |S_v|.$$

Similarly, we have $\B'(v)=|S_v|$ for every $v\in A_2$.
\end{proof}

As we have seen, we can extend every tracking \(B\) of 
\(\B'\) by adding one edge at
each end-vertex of \(B\). Let $\B$ be the tracking decomposition obtained by this 
extension. Since $\B'$ is a $(2k-1)$-path tracking decomposition and we added edges 
only at the end-vertices of these trackings, 
$\B$ is composed of trackings of vanilla \((2k+1)\)-trails. Furthermore, since we added all edges in 
\(E(G)-E(G')\), $\B$ is a decomposition of $G$.

\begin{claim}
\(\B\) is \(\mathbb{F}\)-balanced.
\end{claim}

\begin{proof}
Fix \(x_0\in A_1\). First we will prove that  \(\B(x_0) \leq 
d_{F_{1,k}^{\forw}}(x_0) + d_{M_2}(x_0)\). If there is no tracking \(T = 
x_0x_1\cdots x_{2k+1}\) in \(\B\),
where \(x_0x_1\) is an edge of \(E(G) - E(G')\), then $\B(x_0)=0$. 
For every such tracking $T$, by the construction of $\B$, we know that \(x_0x_1\) is an 
element of \(S_{x_1}\).
Since \(x_1\) is a vertex of \(A_2\), we have \(S_{x_1} \subset M_2\cup 
F_{1,k}^{\forw}\).
Therefore,
\begin{equation*}
\B(x_0) \leq d_{F_{1,k}^{\forw}}(x_0) + d_{M_2}(x_0).
\end{equation*}

Now we will prove that \(\B(x_0) \geq d_{F_{1,k}^{\forw}}(x_0) + 
d_{M_2}(x_0)\). Note that, if \(x_0x_1\) is an edge of \(M_2\cup 
F_{1,k}^{\forw}\)
that is incident to \(x_0\) in \(A_1\) \big(these are the only edges of $G$ that 
can contribute to $\B(x_0)$\big), 
then, by the construction of $\B$, there is a tracking \(Q' = x_1\cdots x_{2k}\) in 
\(\B'\) of a path
such that the tracking \(Q = x_0x_1\cdots x_{2k+1}\) (of a vanilla trail) belongs to 
\(\B\).
Therefore \(\B(x_0) \geq d_{F_{1,k}^{\forw}}(x_0) + d_{M_2}(x_0)\), from where 
we conclude that
\(\B(v) = d_{F_{1,k}^{\forw}}(v) + d_{M_2}(v)\) for every \(v\) in \(A_1\).
Thus, for every vertex \(v\) in \(A_1\) we have 
\[
	\B(v)	= d_{F_{1,k}^{\forw}}(v) + d_{M_2}(v)
			= d^*(v) + d_{M_2}(v)
			= d_{G_1}(v)/(2k+1) + d_{M_2}(v).
\]
Analogously, we have \(\B(v) = d_{G_2}(v)/(2k+1) + d_{M_1}(v)\)
for each vertex \(v\) in \(A_2\).
Thus, \(\B\) is an \(\mathbb{F}\)-balanced $(2k+1)$-tracking decomposition.
\end{proof}

\begin{claim}
\(\B\) is \((2k+1)\)-pre-complete.
\end{claim}

\begin{proof}
Let \(v\in A_i\). We shall prove that \(\prehang(v,\B) > 2k+1\).
Note that, by the construction of $\B$, the set of pre-hanging edges at \(v\) in 
\(\B\) is exactly \(S_v\).
Then, \(\prehang(v,\B) = |S_v| = \B'(v)\).
Since \(\B'\) is balanced, \(\B'(v) \geq d_{G'_i}(v)/(2k-1)\).
Therefore,
\begin{equation*}
	\prehang(v,\B)	= \B'(v) 
					\geq d_{G'_i}(v)/(2k-1) 
					= d^*(v) 
					= d_{G_i}(v)/(2k+1).
\end{equation*} 
Since \(\mathbb{F}\) is a strong \((1,2k+1)\)-bifactorization of \(G\),
we have \(d_{G_i}(v) \geq (2k+1)(2k+2)\), from where we conclude that 
\(\prehang(v,\B) \geq 2k+2\).
Therefore, \(\B\) is a \((2k+1)\)-pre-complete tracking decomposition.
\end{proof}
Now we are able to conclude the proof.
By Lemma~\ref{lemma:odd-complete}, \(G\) admits an 
\(\mathbb{F}\)-balanced
\((k+1)\)-complete $(2k+1)$-tracking decomposition.
By Lemma~\ref{lemma:odd-path-dec}, \(G\) admits an 
\(\mathbb{F}\)-balanced
\((k+1)\)-complete $(2k+1)$-path tracking decomposition.
Therefore, \(G\) admits an \(\mathbb{F}\)-balanced $(2k+1)$-path tracking decomposition.
\end{proof}

\section{Decomposition into paths of even length}\label{section:even-paths}
The technique used in this section is analogous to the one used in 
Section~\ref{section:odd-paths}. 
The results are similar, but to deal with paths of even length some adjustments were necessary.
%due to the 
%different parity it is not possible to state them together. We start with the 
%definition of balanced tracking decompositions for trackings of trails of even length.

\begin{definition}[Balanced tracking decomposition -- even case]
Let \(k\) be a positive integer.
Let \(G = (A,B;E)\) be a bipartite graph 
that admits a \(\big(2,2(2k+2)\big)\)-bifactorization 
\(\mathbb{F}=(\mathcal{F}_1,\mathcal{F}_2)\),
and let \(G_i = G\big[\bigcup_{F\in\mathcal{F}_i}F\big]\) for \(i=1,2\).
Let \(M_1,N_1\) be the \(\big(A,1,2(2k+2)\big)\)-factors of \(\mathbb{F}\) and 
\(M_2,N_2\) be the
\(\big(B,1,2(2k+2)\big)\)-factors of \(\mathbb{F}\).
We say that a \((2k+2)\)-tracking decomposition \(\B\) of $G$ into is
\emph{\(\mathbb{F}\)-balanced} if the following holds.
	\begin{itemize}
		\item \(\B(v) = d_{G_1}(v)/(2k+2) + d_{M_2}(v)
			+ d_{N_2}(v)\) for every \(v\in A\);
		\item \(\B(v) = d_{G_2}(v)/(2k+2) + d_{M_1}(v)
			+ d_{N_1}(v)\) for every \(v\in B\).
	\end{itemize}
%Furthermore, if all vanilla trails of $\D$ are paths, then we say that $\D$ is an 
%\emph{\(\mathbb{F}\)-balanced path decomposition} of \(G\).
\end{definition}

Our aim is to prove Theorem~\ref{thm:even-path-decomposition}, which
guarantees that one may obtain an $\mathbb{F}$-balanced \((2k+2)\)-complete
$(2k+2)$-path tracking decomposition from a strong
$\big(2,2(2k+2)\big)$-bifactorization $\mathbb{F}$. First, we show
that from a \((2,4)\)-bifactorization we may obtain a balanced $2$-path tracking decomposition.

\begin{lemma}\label{lemma:2-PD}
	If \(G\) is a bipartite graph that admits a 
	\((2,4)\)-bifactorization~\(\mathbb{F}\),
	then \(G\) admits an \(\mathbb{F}\)-balanced $2$-path decomposition.
\end{lemma}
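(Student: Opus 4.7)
The plan is to construct the desired decomposition directly, mimicking the $3$-path argument in Lemma~\ref{lemma:3-PD} but splitting the work symmetrically between $G_1$ and $G_2$. Writing $\mathcal{F}_i=\{M_i,N_i,F_i\}$, where $M_i,N_i$ are $(A_i,1,4)$-factors of $G_i$ and $F_i$ is an Eulerian $(A_i,2,4)$-factor of $G_i$, we have $d_{M_i}(v)=d_{N_i}(v)=d_{G_i}(v)/4$ and $d_{F_i}(v)=d_{G_i}(v)/2$ for every $v\in A_i$. Setting $A_1=A$ and $A_2=B$, the strategy is to build four families of $2$-paths so that each edge of $G$ is used exactly once, and so that the endpoints of the $2$-paths land precisely on the correct side.

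First I would decompose $F_1$ exactly as in the proof of Lemma~\ref{lemma:3-PD}: take an Eulerian tour $a_0b_0a_1b_1\cdots a_sb_sa_0$ of each component (starting at a vertex in $A$, which exists since $F_1\subset G$ is bipartite and Eulerian), and form the trackings $a_ib_ia_{i+1}$ with indices mod $s+1$. Because $G$ is simple, consecutive entries $a_i,a_{i+1}$ are distinct (otherwise $a_ib_i$ would be a parallel edge), so each tracking is a genuine $2$-path with middle in $B$ and endpoints in $A$. Second, at every vertex $v\in A$ I would pair the $d_{M_1}(v)$ edges of $M_1$ at $v$ bijectively with the $d_{N_1}(v)=d_{M_1}(v)$ edges of $N_1$ at $v$; each pair yields a $2$-path $b_1vb_2$ with middle $v\in A$ and endpoints $b_1,b_2\in B$ (distinct, since the two paired edges are distinct and $G$ is simple). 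Steps~3 and~4 are the mirror images for $G_2$: decompose $F_2$ via Eulerian tours anchored in $B$ to obtain $2$-paths with endpoints in $B$, and pair $M_2,N_2$ at each vertex of $B$ to obtain $2$-paths with endpoints in $A$. Together these four families use each edge of $G$ exactly once, giving a $2$-path decomposition~$\B$.

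To check that $\B$ is $\mathbb{F}$-balanced, note that each edge of $G$ contributes $+1$ to $\B$ at exactly one of its endpoints (the endpoint of the $2$-path that contains it). For $v\in A$ the only contributions come from the $F_1$-family and the $M_2/N_2$-family, giving
\[
\B(v)=d_{F_1}(v)+d_{M_2}(v)+d_{N_2}(v)=\tfrac{d_{G_1}(v)}{2}+d_{M_2}(v)+d_{N_2}(v),
\]
and a symmetric computation handles $v\in B$. The construction is entirely elementary; the only real point one must notice is the parity/colour matching of the four decompositions, which is what forces $F_1$ and $M_2\cup N_2$ to contribute endpoints in $A$ while $F_2$ and $M_1\cup N_1$ contribute endpoints in $B$. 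There is no genuine obstacle beyond verifying that the Eulerian tours and the $M_i/N_i$ pairings yield bona fide $2$-paths rather than degenerate $2$-trails, which follows from the simplicity of $G$ and the disjointness $M_i\cap N_i=M_i\cap F_i=N_i\cap F_i=\emptyset$ inherent to the bifactorization.
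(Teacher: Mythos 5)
Your proof is correct and follows the paper's own argument almost verbatim: decompose each \(F_i\) via Eulerian tours into \(2\)-paths with endpoints on the proper side, then pair up the \(M_i\cup N_i\) edges at each vertex of the appropriate class into \(2\)-paths whose endpoints lie on the opposite side. The only cosmetic difference is that you pair \(M_1\)-edges with \(N_1\)-edges specifically, while the paper pairs the edges of \(M_1\cup N_1\) at a vertex arbitrarily (using only that their number \(2d_{M_1}(v)\) is even); both yield the same decomposition and the same balance count \(\B(v)=d_{G_1}(v)/2+d_{M_2}(v)+d_{N_2}(v)\) for \(v\in A\).
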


\begin{proof}
	Let \(G = (A,B;E)\) be a bipartite graph 
	that admits a \((2,4)\)-bifactorization
	\(\mathbb{F} = (\mathcal{F}_1,\mathcal{F}_2)\) and put \(\mathcal{F}_i 
= 
\{M_i,N_i,F_i\}\) for \(i = 1,2\).
	Let \(O_{F_i}\) be an Eulerian orientation of \(G[F_i]\), for \(i = 1,2\).
	Let \(\mathcal{C}_1\) be the set of components of \(G[F_1]\).
	Let \(T\) be an element of \(\mathcal{C}_1\) and \(B_T = a_0b_0a_1b_1\cdots a_sb_s\) 
	be a tracking of \(T\), where \(a_i \in A\), and \(b_i \in B\), for \(1\leq i\leq s\).
	We have that \(\B'_T = \{a_ib_ia_{i+1}\colon 0\leq i\leq s\}\),
	taking \(a_{s+1} = a_0\), is a \(2\)-tracking decomposition of \(T\)
	in which every tracking has its end-vertices in \(A\).
	Therefore, \(\B'_1 = \cup_{T\in\mathcal{C}_1}\B'_T\) is a 
	\(2\)-tracking decomposition of \(G[F_1]\) in which every tracking has its end-vertices in \(A\).
%	Since \(O_{F_1}\) is an Eulerian orientation  and \(G\) is bipartite, 
%	\(G[F_1]\) admits a decomposition \(\D'_1\)
%	into directed \(2\)-paths such that every path of \(\D'_1\)
%	has its end-vertices in \(A\).
	Analogously \(G[F_2]\) admits a \(2\)-tracking decomposition \(\B'_2\) in which every tracking
	has its end-vertices in \(B\).
%	Analogously \(G[F_2]\) admits a decomposition \(\D'_2\)
%	into directed \(2\)-paths such that every path of \(\D'_2\)
%	has its end-vertices in \(B\).
%	Note that \(d_{F_1}(v) = \D'_1(v) = d_G(v)/2\) for every vertex \(v\) in \(A\),
%	and \(d_{F_2}(v) = \D'_2(v) = d_G(v)/2\) for every vertex \(v\) in \(B\).

	Let \(v\) be a vertex in \(A\).
	Since \(M_1\) and \(N_1\) are \((1,1,4)\)-factors of \(G\) we have \(d_{M_1}(v) = d_{N_1}(v)\).
	Thus, the number of edges in \(M_1\cup N_1\) incident to \(v\) is even,
	and we can decompose the edges in \(M_1\cup N_1\) incident to \(v\) 
	into \(2\)-paths such that each path has its end-vertices in \(B\).
	Taking any tracking of each of these paths, we obtain a \(2\)-tracking decomposition \(\B''_1\) 
	of the edges in \(M_1\cup N_1\) 
	such that each path has its end-vertices in \(B\).
	Analogously, there is a \(2\)-tracking decomposition \(\B''_2\) of the edges in \(M_2\cup N_2\) 
	such that each path has its end-vertices in \(A\).
%	Note that \(d_{M_1\cup N_1}(v) = \D''_1(v)\) for every vertex \(v\) in \(B\),
%	and \(d_{M_2\cup N_2}(v) = \D''_2(v)\) for every vertex \(v\) in \(A\).
	
	Let \(\B = \B'_1\cup\B'_2\cup\B''_1\cup\B''_2\).
	Note that only the paths in \(\B'_1\) and in \(\B''_2\) have end-vertices
	in~\(A\),
	and analogously
	only the paths in \(\B'_2\) and in \(\B''_1\) have end-vertices in \(B\).
	Therefore, if \(v\) is a vertex of \(A\), 
	then \(\B(v) = \B'_1(v) + \B''_2(v) 
		= d_G(v)/2 + d_{M_2}(v) + d_{N_2}(v)\),
	and	if \(v\) is a vertex of \(B\), 
	then \(\B(v) = \B'_2(v) + \B''_2(v) 
		= d_G(v)/2 + d_{M_1}(v) + d_{N_1}(v)\).
	Thus, \(\B\) is an \(\mathbb{F}\)-balanced $2$-path tracking decomposition of \(G\).
	
\end{proof}

%Let $\mathbb{F}$ be a \(\big(2,2(2k+2)\big)\)-bifactorization of a 
%bipartite graph $G$. In the next lemma we prove that it is possible to obtain 
%an \(\mathbb{F}\)-balanced \((k+2)\)-complete $(2k+2)$-tracking decomposition
%from an \(\mathbb{F}\)-balanced \((2k+3)\)-pre-complete $(2k+2)$-tracking decomposition.

In the next lemma, we show how pre-completeness is related to completeness
of even tracking decompositions.  

\begin{lemma}\label{lemma:even-complete}
	Let \(k\) be a positive integer.
	Let \(G\) be a bipartite graph that admits a  
	\(\big(2,2(2k+2)\big)\)-bifactorization \(\mathbb{F}\).
	If \(G\) admits an \(\mathbb{F}\)-balanced \((2k+3)\)-pre-complete $(2k+2)$-tracking decomposition, 
	then \(G\) admits an \(\mathbb{F}\)-balanced \((k+2)\)-complete $(2k+2)$-tracking decomposition.
\end{lemma}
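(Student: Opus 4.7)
The plan is to mirror the approach of Lemma~\ref{lemma:odd-complete}, adapting the counting argument to the even length. Let \(\mathcal{F}^*\) denote the set of all \(\mathbb{F}\)-balanced \((2k+3)\)-pre-complete \((2k+2)\)-tracking decompositions of \(G\); by hypothesis \(\mathcal{F}^*\neq\emptyset\). I would pick \(\B\in\mathcal{F}^*\) maximizing \(\sum_{v\in V(G)}\hang(v,\B)\) and prove by contradiction that \(\B\) is \((k+2)\)-complete. If it is not, then some vertex \(v\) satisfies \(\hang(v,\B)\leq k+2\); suppose without loss of generality that \(v\in A\). Since \(\prehang(v,\B)\geq 2k+4\), there are at least \(k+2\) pre-hanging edges at \(v\) that are not hanging, say \(x_1v,\ldots,x_{k+2}v\).

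Let \(T_1=y_0y_1\cdots y_{2k+2}\) be the element of \(\B\) containing \(x_1v\), with \(y_0=x_1\) and \(y_1=v\). Because \(G\) is bipartite and \(v\in A\), the vertices of \(T_1\) lying in \(B\) occupy precisely the even positions \(y_0,y_2,y_4,\ldots,y_{2k+2}\), giving at most \(k+2\) distinct vertices in \(B\). Each \(x_i\) lies in \(B\); moreover \(x_i\neq x_1=y_0\) for \(i\geq 2\) by distinctness, and the argument used in the odd case shows that \(y_2\neq x_i\) for \(i\geq 2\) (since the edge \(y_1y_2\) is internal to \(T_1\) and therefore not pre-hanging at \(v\)). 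Thus the only positions at which some \(x_i\) with \(i\geq 2\) can equal a vertex of \(T_1\) are \(y_4,y_6,\ldots,y_{2k+2}\), which comprise at most \(k\) distinct vertices. By pigeonhole applied to the \(k+1\) indices \(i\in\{2,\ldots,k+2\}\), some \(x_j\notin V(T_1)\).

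At this point I would perform the standard swap: let \(T_j=z_0z_1\cdots z_{2k+2}\) be the element of \(\B\) containing \(x_jv\), oriented so that \(z_0=x_j\) and \(z_1=v\); set \(T_1'=x_jy_1\cdots y_{2k+2}\) and \(T_j'=y_0z_1\cdots z_{2k+2}\), and \(\B'=\B-T_1-T_j+T_1'+T_j'\). Then \(\bar T_1'=\bar T_1-x_1v+x_jv\) and \(\bar T_j'=\bar T_j-x_jv+x_1v\); since \(x_j\notin V(T_1)\), we have \(d_{\bar T_1'}(x_j)=1\), so \(x_jv\) becomes a hanging edge in \(\B'\), giving \(\sum_v\hang(v,\B')>\sum_v\hang(v,\B)\). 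The set of pre-hanging edges at each vertex is unchanged by this swap, so \(\B'(u)=\B(u)\) for all \(u\), hence \(\B'\) is still \(\mathbb{F}\)-balanced and \((2k+3)\)-pre-complete. This contradicts the maximality of \(\B\).

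The main obstacle, as in the odd case, is the pigeonhole counting: one must verify carefully that the thresholds \(2k+3\) (for pre-completeness) and \(k+2\) (for completeness) are tight enough that, after excluding the forbidden positions \(y_0\) and \(y_2\) among the \(k+2\) even positions of a \((2k+2)\)-trail, the remaining \(k\) positions cannot absorb all \(k+1\) candidate \(x_i\)'s. Everything else is a straightforward transcription of the odd argument, with \(2k+1\) replaced by \(2k+2\) throughout.
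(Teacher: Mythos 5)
Your proof is correct and takes essentially the same approach as the paper's: choose an $\mathbb{F}$-balanced $(2k+3)$-pre-complete $(2k+2)$-tracking decomposition $\B$ maximizing $\sum_v \hang(v,\B)$, and if some $v$ has $\hang(v,\B)\leq k+2$, use pre-completeness to find $k+2$ pre-hanging-but-not-hanging edges at $v$, rule out $y_0$ and $y_2$ as candidates among the $k+2$ positions of $T_1$ in $B$, apply pigeonhole to the remaining $k$ positions versus $k+1$ candidates to find $x_j\notin V(T_1)$, and swap to increase the hanging count while preserving all pre-hanging edges, contradicting maximality. (Your explicit formulas $T_1'=x_jy_1\cdots y_{2k+2}$ and $T_j'=y_0z_1\cdots z_{2k+2}$ in fact correct a small indexing slip present in the paper's own statement of the swap.)
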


\begin{proof}
	Let \(k\) be a positive integer.
	Let \(G = (A,B;E)\) be a bipartite graph that admits a 
	\((2,2(2k+2))\)-bifactorization \(\mathbb{F}\).
	Let $\mathcal{F}$ be the set of all \(\mathbb{F}\)-balanced 
	\((2k+3)\)-pre-complete $(2k+2)$-tracking decompositions of $G$. 
	Now let \(\B\) be a tracking decomposition in $\mathcal{F}$ 
	such that  \(\sum_{v\in V(G)} \hang(v,\B)\) is maximum over all 
	tracking decompositions in $\mathcal{F}$.
	We claim that \(\B\) is a \((k+2)\)-complete tracking decomposition,
	i.e, \(\hang(v,\B) > k+2\) for each vertex \(v\) of \(G\).
	
	Suppose, for a contradiction, that \(\B\) is not \((k+2)\)-complete.
	Then there is a vertex \(v\) of \(G\) such that \(\hang(v,\B) \leq k+2\).
	Suppose, without loss of generality, that \(v\) is a vertex of \(A\).
	Since \(\B\) is \((2k+3)\)-pre-complete, \(\prehang(v,\B) \geq 2k+4\).
	Thus, there are at least \(k+2\) pre-hanging edges at \(v\) 
	that are not hanging edges at \(v\), say \(x_1v,\ldots,x_{k+2}v\).
	Let \(T_1 = y_0y_1\cdots y_{2k+2}\) be the element of \(\B\) that contains \(x_1v\),
	where, without loss of generality, \(y_0 = x_1\) and \(y_1 = v\).
	The vertices of \(T_1\) in \(B\) are \(y_0,y_2,\ldots, y_{2k+2}\).
	First, we will show that for some $x_i\notin V(T_1)$ for some \(2\leq i \leq k+2\). 
	%\(i \in \{2,\ldots,k+2\}\).
	Since \(y_2\) is in \(B\) and \(y_0\) is the end-vertex of \(T_1\) in \(B\), 
	the vertex \(y_2\) is not an end-vertex of \(T_1\). 
	Since \(y_1y_2 \in E(T_1)\), the edge \(y_1y_2\) is not a pre-hanging edge at \(v\).
	Then, \(y_2 \neq x_i\) for every \(2\leq i\leq k+2\). 
	Therefore, \(|\{y_4, y_6,\ldots,y_{2k+2}\}| = k\) and \(|\{2,\ldots,k+2\}| = k+1\), 
	from where we conclude that for at least one \(i\), we have \(x_i \notin V(T_1)\).
	
	Now let \(T_i = z_0z_1\cdots z_{2k+2}\) be the element of \(\B\) that contains \(x_iv\).
	Suppose, without loss of generality, that \(z_1 = v\) and \(z_0 = x_i\).
	Let \(T'_1 = vy_1\cdots y_{2k+1}\) and \(T'_i = y_0 z_1\cdots z_{2k+2}\)
	and let \(\B' = \B - T_1 - T_i + T'_1 + T'_i\).
	We note that \(\bar T'_1 = \bar T_1 - x_1v + x_iv\) and \(\bar T'_i = \bar T_i -x_iv + x_1v\).
	Since \(x_i \notin V(T_1)\), we have \(d_{T'_1}(x_i) = 1\), which 
implies that \(x_iv\) is a hanging edge of \(\B'\) at \(v\).
	Therefore, \(\sum_{v\in V(G)} \hang(v,\B') > 
			\sum_{v\in V(G)} \hang(v,\B)\).
			
	We claim that \(\B'\) is \(\mathbb{F}\)-balanced.
	Indeed, since the set of pre-hanging edges at~\(v\) is the same in 
\(\B\) and \(\B'\),
	we have \(\B(v) = \B'(v)\) for every \(v\) in \(V(G)\).
	Therefore, \(\B'\) is an \(\mathbb{F}\)-balanced 
\((2k+3)\)-pre-complete
	decomposition such that 
	\(\sum_{v\in V(G)} \hang(v,\B') > 
			\sum_{v\in V(G)} \hang(v,\B)\),
	a contradiction to the choice of \(\B\).
\end{proof}

%Let $G$ be a bipartite graph and let \(\mathbb{F}\) 
%be a \(\big(2,2(2k+2)\big)\)-bifactorization of $G$. 
%The next result makes use of the Disentangling Lemma 
%(Lemma~\ref{lemma:disentangling}) to show 
%that it is possible to obtain an
%\(\mathbb{F}\)-balanced \((k+2)\)-complete $(2k+2)$-path tracking decomposition of $G$ 
%from an \(\mathbb{F}\)-balanced \((k+2)\)-complete $(2k+2)$-tracking decomposition of $G$.

As we did for the odd paths, we use the Disentangling Lemma to obtain a path decomposition.

\begin{lemma}\label{lemma:even-path-dec}
	Let \(k\) be a positive integer and let \(G\) be a bipartitte graph that admits a 
	\(\big(2,2(2k+2)\big)\)-bifactorization \(\mathbb{F}\).
	If \(G\) admits an \(\mathbb{F}\)-balanced \((k+2)\)-complete 
	vanilla $(2k+2)$-tracking decomposition, 
	then \(G\) admits an \(\mathbb{F}\)-balanced \((k+2)\)-complete 
	$(2k+2)$-path tracking decomposition.
\end{lemma}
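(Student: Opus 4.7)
The proof of this lemma should be an essentially immediate application of the Disentangling Lemma (Lemma~\ref{lemma:disentangling}), by direct analogy with Lemma~\ref{lemma:odd-path-dec}. My plan is as follows.

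Start with the given $\mathbb{F}$-balanced $(k+2)$-complete vanilla $(2k+2)$-tracking decomposition $\B$ of $G$. To invoke the Disentangling Lemma with $\ell = 2k+2$, one needs a ``$k$'' parameter (in the notation of that lemma) at least $\lceil (\ell+1)/2 \rceil = \lceil (2k+3)/2 \rceil = k+2$. Our hypothesis provides exactly a $(k+2)$-complete decomposition, so the arithmetic matches perfectly. Applying the Disentangling Lemma to $\B$ with this choice of parameters yields a $(k+2)$-complete $(2k+2)$-path tracking decomposition $\B'$ of $G$ such that $\B'(v) = \B(v)$ for every vertex $v$ of $G$.

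It remains to verify that $\B'$ is $\mathbb{F}$-balanced. But $\mathbb{F}$-balancedness is a condition that depends only on the values $\B(v)$ for $v \in V(G)$, compared against the prescribed quantities $d_{G_i}(v)/(2k+2) + d_{M_{3-i}}(v) + d_{N_{3-i}}(v)$. Since $\B'(v) = \B(v)$ for every $v$, and $\B$ was $\mathbb{F}$-balanced by hypothesis, $\B'$ is also $\mathbb{F}$-balanced. This completes the proof.

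There is no genuine obstacle here; the lemma is tailored so that the numerical parameters of the Disentangling Lemma just barely apply. The only thing one needs to check carefully is that the completeness parameter $k+2$ in the hypothesis is indeed sufficient (and not off by one) for $\ell = 2k+2$, which it is, as shown above. This parallels exactly the odd case, where $(k+1)$-completeness sufficed for $\ell = 2k+1$.
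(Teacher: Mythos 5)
Your proposal is correct and matches the paper's proof essentially verbatim: apply the Disentangling Lemma with $\ell = 2k+2$, note that $(k+2)$-completeness meets the threshold $\lceil(\ell+1)/2\rceil = k+2$, and observe that $\mathbb{F}$-balancedness is preserved because $\B'(v) = \B(v)$ for every vertex $v$.
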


\begin{proof}
	Let \(k\) be a positive integer and let \(G = (A,B;E)\) be a 
	bipartite graph that admits a 
	\(\big(2,2(2k+2)\big)\)-bifactorization \(\mathbb{F}\).
	Suppose $G$ admits an \(\mathbb{F}\)-balanced 
	\((k+2)\)-complete
	$(2k+2)$-tracking decomposition $\B$.
	By Lemma~\ref{lemma:disentangling} (applied with \(\ell = 2k+2\)),
	the graph \(G\) admits a \((k+2)\)-complete tracking decomposition \(\B'\)
	such that \(\B'(v) = \B(v)\)
	for every vertex \(v\) of \(G\).
	Therefore, \(\B'\) is an \(\mathbb{F}\)-balanced 
	$(k+2)$-complete $(2k+2)$-tracking decomposition.
\end{proof}

Now we are ready to prove the main result of this section.

\begin{theorem}\label{thm:even-path-decomposition}
	Let \(k\) be a positive integer. 
	If \(G\) is a bipartite graph that 
admits a strong 
	\(\big(2,2(2k+2)\big)\)-bifactorization \(\mathbb{F}\),
	then \(G\) admits an \(\mathbb{F}\)-balanced $(2k+2)$-path 
tracking decomposition.
\end{theorem}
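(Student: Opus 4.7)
The plan is to proceed by induction on $k$, in direct analogy with the proof of Theorem~\ref{thm:odd-path-decomposition}. The base case is $k=0$, where the hypothesis reduces to a $(2,4)$-bifactorization and the desired conclusion is an $\mathbb{F}$-balanced $2$-path decomposition; this is exactly Lemma~\ref{lemma:2-PD}. So assume $k\ge 1$ and that the theorem holds for $k-1$. Let $G=(A_1,A_2;E)$ admit a strong $\big(2,2(2k+2)\big)$-bifactorization $\mathbb{F}=(\mathcal{F}_1,\mathcal{F}_2)$, where $\mathcal{F}_i=\{M_i,N_i,F_{i,1},\ldots,F_{i,2k+1}\}$, and write $d^*(v)=d_{G_i}(v)/(4k+4)$ for $v\in A_i$, so that $d_{M_i}(v)=d_{N_i}(v)=d^*(v)$ and $d_{F_{i,j}}(v)=2d^*(v)$.

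First I would reduce to the case $k-1$. Choose Eulerian orientations of the last two factors of each family, and set
\[
G'\;=\;G-M_1-N_1-M_2-N_2-F_{1,2k}^{\forw}-F_{1,2k+1}^{\forw}-F_{2,2k}^{\forw}-F_{2,2k+1}^{\forw},
\]
together with $\mathcal{F}'_i=\{F_{i,2k}^{\back},F_{i,2k+1}^{\back},F_{i,1},\ldots,F_{i,2k-1}\}$. A direct degree count gives $d_{G'_i}(v)=4k\,d^*(v)$, so each back-half is an $(A_i,1,4k)$-factor and each surviving $F_{i,j}$ is an Eulerian $(A_i,2,4k)$-factor of $G'_i$; hence $\mathbb{F}'=(\mathcal{F}'_1,\mathcal{F}'_2)$ is a $(2,4k)$-bifactorization of $G'$. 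Strongness is preserved: $d_{G'_i}(v)=\tfrac{k}{k+1}d_{G_i}(v)\ge \tfrac{k}{k+1}(2k+2)(2k+4)=2k(2k+4)\ge 2k(2k+2)$. The induction hypothesis then produces an $\mathbb{F}'$-balanced $2k$-path tracking decomposition $\B'$ of $G'$.

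Next I would extend every tracking in $\B'$ by adding a new edge at each of its two end-vertices, producing a $(2k+2)$-tracking decomposition $\B$ of $G$. For $v\in A_1$, set $S_v$ to be the edges of $M_1\cup N_1\cup F_{2,2k}^{\forw}\cup F_{2,2k+1}^{\forw}$ incident to $v$, and symmetrically for $v\in A_2$; by pairwise disjointness of the bifactorization pieces, the family $\{S_v\}_{v\in V(G)}$ partitions $E(G)-E(G')$. Using $d_{M_i}(v)=d_{N_i}(v)=d^*(v)$ and $d_{F_{2,j}^{\forw}}(v)=d_{F_{2,j}^{\back}}(v)$ (from the Eulerian orientations), one checks that, for $v\in A_1$,
\[
|S_v|\;=\;2d^*(v)+d_{F_{2,2k}^{\back}}(v)+d_{F_{2,2k+1}^{\back}}(v)\;=\;\frac{d_{G'_1}(v)}{2k}+d_{F_{2,2k}^{\back}}(v)+d_{F_{2,2k+1}^{\back}}(v)\;=\;\B'(v),
\]
and an analogous identity holds on $A_2$. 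Hence each $\B'$-path can be extended by assigning to each of its end-vertices $v$ a distinct edge of $S_v$.

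It remains to verify that $\B$ is $\mathbb{F}$-balanced and $(2k+3)$-pre-complete, after which Lemmas~\ref{lemma:even-complete}~and~\ref{lemma:even-path-dec} finish the argument. For balance, for $v\in A_1$ the starting edge of any tracking of $\B$ starting at $v$ lies in $S_{x_1}\subset M_2\cup N_2\cup F_{1,2k}^{\forw}\cup F_{1,2k+1}^{\forw}$, where $x_1\in A_2$ is the second vertex; counting and using $d_{F_{1,j}^{\forw}}(v)=d^*(v)$ gives $\B(v)=d_{M_2}(v)+d_{N_2}(v)+2d^*(v)=d_{G_1}(v)/(2k+2)+d_{M_2}(v)+d_{N_2}(v)$, and the identity for $v\in A_2$ is symmetric. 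For pre-completeness, the pre-hanging edges at $v$ in $\B$ coincide with the extension edges chosen at $v$, so $\prehang(v,\B)=|S_v|=\B'(v)\ge 2d^*(v)\ge 2(k+2)=2k+4$, using $d^*(v)\ge(2k+2)(2k+4)/(4k+4)=k+2$ from the strongness of $\mathbb{F}$. Lemma~\ref{lemma:even-complete} then upgrades $\B$ to an $\mathbb{F}$-balanced $(k+2)$-complete $(2k+2)$-tracking decomposition, and Lemma~\ref{lemma:even-path-dec} converts this into an $\mathbb{F}$-balanced $(2k+2)$-path tracking decomposition, closing the induction. The delicate point of the whole argument is the strip-and-extend bookkeeping: one must remove exactly the right eight pieces of $\mathbb{F}$ (two matchings per family together with the forward halves of the two highest-indexed factors per family) so that $|S_v|=\B'(v)$ holds on the nose, after which everything is a parity-adapted mirror of the odd-length proof.
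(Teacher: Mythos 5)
Your proposal is correct and follows essentially the same route as the paper's own proof: induction on $k$ with Lemma~\ref{lemma:2-PD} as base case, stripping the two matchings and the forward halves of the two highest-indexed factors from each family, applying the induction hypothesis to the resulting strong $(2,4k)$-bifactorization, extending the $2k$-paths using the stripped edges after verifying $|S_v|=\B'(v)$, and then invoking Lemmas~\ref{lemma:even-complete}~and~\ref{lemma:even-path-dec}. The degree bookkeeping and strongness estimates you give match the paper's computations.
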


\begin{proof}
The proof is by induction on \(k\).
By Lemma~\ref{lemma:2-PD}, the statement is true for \(k = 0\).
Thus, suppose \(k>0\),
and let \(G = (A_1,A_2;E)\) be a bipartite graph that admits a strong 
\(\big(2,2(2k+2)\big)\)-bifactorization 
\(\mathbb{F} = (\mathcal{F}_1,\mathcal{F}_2)\).
We claim that \(G\) admits an \(\mathbb{F}\)-balanced 
\((2k+3)\)-pre-complete
vanilla $(2k+2)$-trail decomposition. Let \(\mathcal{F}_1 = 
\{M_1,N_1,F_{1,1},\ldots,F_{1,2k+1}\}\) 
and \(\mathcal{F}_2 = \{M_2,N_2,F_{2,1},\ldots,F_{2,2k+1}\}\), and let \(G_i = 
G\big[\bigcup_{F\in\mathcal{F}_i} F\big]\) for \(i = 1,2\). 
Hereafter, fix \(i\in\{1,2\}\).

Define \(d^*(v) = d_{G_i}(v)/\big(2(2k+2)\big)\) for every vertex $v\in A_i$.
Note that \(d_{F_{i,j}}(v) = 2d^*(v) = 2d_{M_i}(v) = 2d_{N_i}(v)\) for every 
vertex \(v\) in \(A_i\)
and \(1\leq j\leq 2k+1\).
For \(j \in \{2k,2k+1\}\), let \(O_{F_{i,j}}\) be an Eulerian 
orientation of \(G[F_{i,j}]\).
Let \(F_{i,j} = F_{i,j}^{\forw} \cup F_{i,j}^{\back}\), where 
\(F_{i,j}^{\forw}\) is the set of edges
of \(F_{i,j}\) leaving $A_i$ in \(O_{F_{i,j}}\), and 
\(F_{i,j}^{\back}\) 
is the set of edges
of \(F_{i,j}\) entering $A_i$ in~\(O_{F_{i,j}}\).

Let \(G' = G - M_1 - N_1 - M_2 - N_2 - F_{1,2k}^{\forw} - F_{1,2k+1}^{\forw} 
	- F_{2,2k}^{\forw} - F_{2,2k+1}^{\forw}\),
and let \(\mathcal{F}'_i = 
\{F_{i,2k}^{\back},F_{i,2k+1}^{\back},F_{i,1},\ldots, 
F_{i,2k-1}\}\).
Let \(G'_i = G\big[\bigcup_{F\in\mathcal{F}'_i} F\big]\).
Note that \(G'_i = G_i - M_i - N_i - F_{i,2k}^{\forw} - F_{i,2k+1}^{\forw}\).
Then, for every $v\in A_i$, we have
\begin{equation}\label{even-d}
d_{G'_i}(v) = d_{G_i}(v) - 4d^*(v)
	    = 2(2k+2)d^*(v) - 4d^*(v)
	    = 2(2k)d^*(v).
\end{equation}

\begin{claim}
\(\mathbb{F}' = (\mathcal{F}'_1,\mathcal{F}'_2)\) 
is a strong \(\big(2,2(2k)\big)\)-bifactorization of \(G'\).
\end{claim}

\begin{proof}
To prove this claim, we shall prove the following.
\begin{itemize}
	\item[(i)] \(F_{i,2k}^{\back}\) and \(F_{i,2k+1}^{\back}\) 
			are \(\big(A_i,1,2(2k)\big)\)-factors of \(G'_i\);
	\item[(ii)] \(F_{i,j}\) is an Eulerian \(\big(A_i,2,2(2k)\big)\)-factor 
			of \(G'_i\) for \(j = 1,\ldots,2k-1\);
	\item[(iii)] \(d_{G'_i}(v) \geq (2k)(2k+2)\) for every vertex \(v\in 
A_i\).\\
\end{itemize}

To prove items (i) and (ii), first note that, for every $v\in A_i$, we have
 \(d_{F_{i,2k}^{\back}}(v) = d_{F_{i,2k+1}^{\back}}(v) = d^*(v)\) 
and \(d_{F_{i,j}}(v) = 2d^*(v)\) for every \(1\leq j\leq 2k-1\). 
By~\eqref{even-d}, we conclude that \(F_{i,2k}^{\back}\) and 
\(F_{i,2k+1}^{\back}\) are \(\big(A_i,1,2(2k)\big)\)-factors of \(G'_i\), and 
\(F_{i,j}\) is an \(\big(A_i,2,2(2k)\big)\)-factor of \(G'_i\).
Since \(\mathbb{F}\) is a \(\big(2,2(2k+2)\big)\)-bifactorization,
\(F_{1,j}\) and \(F_{2,j}\) are Eulerian graphs for 
\(1\leq j\leq 2k-1\).

It remains to prove item (iii). 
Since \(d_{G'_i}(v) = 2(2k)d^*(v)\) and \(d^*(v) = d_{G_i}(v)/\big(2(2k+2)\big)\) 
for every vertex \(v\in A_i\),
we have \(d_{G'_i}(v) = \frac{2(2k)}{2(2k+2)}d_{G_1}(v)\) for every \(v\in 
A_i\).
Since \(\mathbb{F}\) is a strong \(\big(2,2(2k+2)\big)\)-bifactorization, 
we have \(d_{G_i}(v) \geq (2k+2)(2k+4)\) for every \(v\in A_i\).
Thus \(d_{G'_i}(v) \geq (2k)(2k+4) > (2k)(2k+2)\) for every \(v\in A_i\).
\end{proof}

Since \(\mathbb{F}'\) is a strong \(\big(2,2(2k)\big)\)-bifactorization of \(G'\),
by the induction hypothesis, \(G'\) admits an \(\mathbb{F}'\)-balanced 
$2k$-path tracking decomposition \(\B'\).

Since \(\B'\) is an \(\mathbb{F}'\)-balanced $2k$-path tracking decomposition, we have 
	\begin{itemize}
		\item \(\B'(v) = d_{G'_1}(v)/2k
			+ d_{F_{2,2k}^{\back}}(v)
			+ d_{F_{2,2k+1}^{\back}}(v)\) for every \(v\in A_1\);
		\item \(\B'(v) = d_{G'_2}(v)/2k
			+ d_{F_{1,2k}^{\back}}(v)
			+ d_{F_{1,2k+1}^{\back}}(v)\) for every \(v\in A_2\).
	\end{itemize}
%Recall that since $\B'$ is a path tracking decomposition, $\B'(v)$ denotes the number of 
%trackings $B$ in $\B'$ such that $v$ is an end-vertex of $P$.
Now we want to extend each $2k$-path of \(\B'\) to obtain a \((2k+2)\)-tracking decomposition of $G$. 
% into vanilla \((2k+2)\)-trails. 
For that, we add edges from 
\(E(G)-E(G')\) at the end-vertices of the trackings in $\B'$.
For each vertex \(v\in A_1\) ($v\in A_2$), let \(S_v\) be the set of edges 
of \(M_1\cup N_1 \cup F_{2,2k}^{\forw}\cup F_{2,2k+1}^{\forw}\) (\(M_2\cup N_2 
\cup F_{1,2k}^{\forw}\cup F_{1,2k+1}^{\forw}\)) incident to \(v\).
Note that for each edge \(e\) in \(E(G)-E(G')\) there is exactly 
one \(v\in V(G)\) such that \(e\in S_v\).
Then, \(\bigcup_{v\in V(G)} S_v=E(G) - E(G')\). Therefore, if we prove that 
\(\B'(v) = |S_v|\), then we can extend every tracking $B$ of $\B'$ by adding 
one edge at each end-vertex of $B$.

\begin{claim}
	 \(\B'(v) = |S_v|\), for every $v\in V(G)$.
\end{claim}

\begin{proof}
First, note that, since \(F_{2,2k}\) and \(F_{2,2k+1}\) are Eulerian, 
we have \(d_{F_{2,2k}^{\back}}(v) = d_{F_{2,2k}^{\forw}}(v)\) 
and \(d_{F_{2,2k+1}^{\back}}(v) = d_{F_{2,2k+1}^{\forw}}(v)\)
for every vertex \(v\) in \(A_1\),
and since \(F_{1,2k}\) and \(F_{1,2k+1}\) are Eulerian, we have 
\(d_{F_{1,2k}^{\back}}(v) = d_{F_{1,2k}^{\forw}}(v)\) and 
\(d_{F_{1,2k+1}^{\back}}(v) = d_{F_{1,2k+1}^{\forw}}(v)\)
for every vertex \(v\) in \(A_2\).

For every \(v\in A_i\) and every \(1\leq j\leq 2k-1\), 
we have \(d_{G'_i}(v)/(2(2k)) = d_{F_{i,j}}(v)/2 = d^*(v)\).
Now, recall that for each vertex \(v\in A_i\), we have \(d_{M_i}(v) = 
d_{N_i}(v) = d^*(v)\).
Therefore, for every $v\in A_1$, we have 
\begin{align*}
	 \B'(v) & = d_{G'_1}(v)/(2k) + d_{F_{2,2k}^{\back}}(v) + 
d_{F_{2,2k+1}^{\back}}(v) \\
			& = 2d^*(v) + d_{F_{2,2k}^{\forw}}(v) + 
d_{F_{2,2k+1}^{\forw}}(v)\\
			& = d_{M_1}(v) + d_{N_1}(v) + d_{F_{2,2k}^{\forw}}(v) + 
d_{F_{2,2k+1}^{\forw}}(v)\\
			& = |S_v|.
\end{align*}
Similarly, we have $\B'(v)=|S_v|$ for every $v\in A_2$.
\end{proof}

We have shown that every tracking \(B\) of 
\(\B'\) can be extended by adding one edge at each of its end-vertices.
Let \(\B\) be the decomposition obtained with these extensions.
%Since \(\B'\) is a $2k$-path tracking decomposition and we added edges only at 
%the end-vertices of these trackings, $\B$ is composed by trackings of vanilla \((2k+2)\)-trails. 
%Furthermore, since we added all edges in \(E(G)-E(G')\), $\B$  is a 
%decomposition of $G$.
Analogously to the odd case, we conclude that \(\B\) is a \((2k+2)\)-tracking decomposition of~\(G\).

\begin{claim}
\(\B\) is \(\mathbb{F}\)-balanced.
\end{claim}

\begin{proof}

Let \(x_0\) be a vertex in \(A\). First we will prove that \(\B(x_0) \leq 
d_{F_{1,2k}^{\forw}}(x_0) + d_{F_{1,2k+1}^{\forw}}(x_0) 
+ d_{M_2}(x_0) + d_{N_2}(x_0)\).
If there is no tracking \(T = x_0x_1\cdots x_{2k+2}\) in $\B$, where $x_0x_1$ is an 
edge of \(E(G)-E(G')\), then $\B(x_0)=0$. 
For every such tracking $T$, by the 
construction of $\B$, we know that $x_0x_1$ is an element of $S_{x_1}$.
Since \(x_1\) is a vertex of \(A_2\), we have 
that \(S_{x_1} \subset M_2\cup N_2 \cup F_{1,2k}^{\forw} \cup 
F_{1,2k+1}^{\forw}\).
Therefore,
\begin{equation*}
\B(x_0) \leq d_{F_{1,2k}^{\forw}}(x_0) + d_{F_{1,2k+1}^{\forw}}(x_0) 
+ d_{M_2}(x_0) + d_{N_2}(x_0).
\end{equation*}
Now we will prove that $\B(x_0)\geq d_{F_{1,2k}^{\forw}}(x_0) + 
d_{F_{1,2k+1}^{\forw}}(x_0) + d_{M_2}(x_0) + d_{N_2}(x_0)$.
Note that if \(x_0x_1\) is an edge of 
\(M_2\cup N_2 \cup F_{1,2k}^{\forw} \cup F_{1,2k+1}^{\forw}\)
that is incident to \(x_0\) in $A_1$ \big(these are the only edges of $G$ that 
can contribute to $\B(x_0)$\big), then, by the construction of $\B$, there is a 
tracking \(Q' = x_1\cdots x_{2k+2}\) of a path such that the tracking  \(Q = x_0x_1\cdots 
x_{2k+2}\)\(x_0\) (of a vanilla trail) belongs to $\B$.
Therefore, \(\B(x_0) = d_{F_{1,2k}^{\forw}}(x_0) + d_{F_{1,2k+1}^{\forw}}(x_0) 
				+ d_{M_2}(x_0) + d_{N_2}(x_0)\).
Thus, for every vertex \(v\in A_1\) we have 
\begin{align*}
	\B(v)	&= |F_{1,2k}^{\forw}(v)| + |F_{1,2k+1}^{\forw}(v)| + |M_2(v)| + 
|N_2(v)| \\
			&= 2d^*(v) + d_{M_2}(v) + d_{N_2}(v) \\
			&= d_{G_1}(v)/(2k+2) + d_{M_2}(v) + d_{N_2}(v).
\end{align*}
Analogously, we have \(\B(v) = d_{G_2}(v)/(2k+2) + d_{M_1}(v) + d_{N_1}(v)\)
for each vertex \(v\in A_2\).
Thus, \(\B\) is an \(\mathbb{F}\)-balanced vanilla $(2k+2)$-trail decomposition.
\end{proof}

\begin{claim}
\(\B\) is \((2k+3)\)-pre-complete.
\end{claim}

\begin{proof}
Let  \(v\in A_i\).
We shall prove that \(\prehang(v,\B) > 2k+3\).
Note that, by the construction of $\B$, the set of pre-hanging edges at \(v\) 
in \(\B\)
is exactly \(S_v\).
Then, \(\prehang(v,\B) = |S_v| = \B'(v)\).
Since \(\B'\) is balanced, \(\B'(v) \geq d_{G'_i}(v)/(2k)\).
Therefore,
\begin{equation*}
	\prehang(v,\B)	= \B'(v) 
					\geq d_{G'_i}(v)/(2k) 
					= 2d^*(v) 
					= d_{G_i}(v)/(2k+2).
\end{equation*} 
Since \(\mathbb{F}\) is a strong \(\big(2,2(2k+2)\big)\)-bifactorization of \(G\),
we have \(d_{G_i}(v) \geq (2k+2)(2k+4)\), from where we conclude that  
\(\prehang(v,\B) \geq 2k+4\).
Therefore, \(\B\) is a \((2k+3)\)-pre-complete vanilla trail decomposition.
\end{proof}

%Now we are able to conclude the proof.
%By Lemma~\ref{lemma:even-complete},  \(G\) admits an \(\mathbb{F}\)-balanced
%\((k+2)\)-complete vanilla $(2k+2)$-tracking decomposition.
%By Lemma~\ref{lemma:even-path-dec},  \(G\) admits an \(\mathbb{F}\)-balanced
%\((k+2)\)-complete $(2k+2)$-path tracking decomposition.
%Therefore, \(G\) admits an \(\mathbb{F}\)-balanced $(2k+2)$-path tracking decomposition.
Now, analogously to the odd case, using Lemmas~\ref{lemma:even-complete} and~\ref{lemma:even-path-dec}, we conclude that \(G\) admits an \(\mathbb{F}\)-balanced $(2k+2)$-path tracking decomposition.
\end{proof}

\section{Decomposition of highly edge-connected graphs into 
paths}\label{section:high}

In this section we put together the results of 
Section~\ref{section:factorizations}
and Theorem~\ref{thm:odd-path-decomposition}
(resp. Theorem~\ref{thm:even-path-decomposition}) and prove 
Conjecture~\ref{conj:dec-bip} for paths of odd (resp. even) length.
%and the results of Section~\ref{section:factorizations} with 
%Theorem~\ref{thm:even-path-decomposition} to prove 
%Conjecture~\ref{conj:dec-bip} for
%paths of even length.

\subsection{Paths of odd length}

\begin{theorem}\label{thm:high-odd}
	Let \(k\) be a positive integer and let \(r = (2k+1)(2k+2)\). If 
\(G\) is a \(2(6k + 2r+1)\)-edge-connected bipartite graph such that \(|E(G)|\) 
is divisible by \(2k+1\), 
	then \(G\) admits a decomposition into \((2k+1)\)-paths.
\end{theorem}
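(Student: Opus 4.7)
The plan is to obtain Theorem~\ref{thm:high-odd} as an almost immediate consequence of the two main building blocks developed in Sections~\ref{section:factorizations} and~\ref{section:odd-paths}. The edge-connectivity bound $2(6k+2r+1)$ with $r=(2k+1)(2k+2)$ has been engineered precisely to match the hypothesis of Lemma~\ref{lemma:odd-strong-factorization}, and the rest of the work has already been absorbed into Theorem~\ref{thm:odd-path-decomposition}. So the proposal is essentially a two-line composition of existing results.

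More concretely, the first step is to apply Lemma~\ref{lemma:odd-strong-factorization} to $G$ with the given positive integer $k$ and $r=(2k+1)(2k+2)$. Since $G$ is $2(6k+2r+1)$-edge-connected, bipartite, and $|E(G)|$ is divisible by $2k+1$, the lemma yields a strong $(1,2k+1)$-bifactorization $\mathbb{F}$ of $G$. The second step is to feed $\mathbb{F}$ into Theorem~\ref{thm:odd-path-decomposition}, which guarantees that $G$ admits an $\mathbb{F}$-balanced $(2k+1)$-path tracking decomposition $\B$. By the very definition of an $\ell$-path tracking decomposition, the set $\{\bar B \colon B\in\B\}$ is a decomposition of $G$ into $(2k+1)$-paths, which is exactly what is required.

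The main obstacle, of course, is hidden inside the two cited results: Lemma~\ref{lemma:odd-strong-factorization} requires Lemma~\ref{lem:tight-good-initial-decomposition} to split $G$ into two sufficiently edge-connected spanning subgraphs with the correct divisibility per side, and then invokes Lemma~\ref{lemma:1-special-decomposition} (the fractional factorization machinery based on Nash--Williams splitting, Mader's lifting, and the Kano/Petersen factorization results) on each side to obtain the $(A,1,2k+1)$- and $(B,1,2k+1)$-fractional factorizations with the degree lower bounds $(2k+1)(2k+2)$ needed for strongness. Theorem~\ref{thm:odd-path-decomposition} is the inductive disentangling argument: its base case $k=1$ is Lemma~\ref{lemma:3-PD}, and the induction step uses the $\mathbb{F}$-balanced structure to pass from a balanced $(2k-1)$-path tracking decomposition of $G'=G-M_1-M_2-F_{1,k}^{\forw}-F_{2,k}^{\forw}$ to a $(2k+1)$-pre-complete tracking decomposition of $G$, which is promoted via Lemma~\ref{lemma:odd-complete} to $(k+1)$-completeness and then disentangled into a path tracking decomposition via Lemma~\ref{lemma:odd-path-dec}, i.e.\ the Disentangling Lemma.

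Since all this heavy lifting is already done, no new argument is needed here. The only verification is bookkeeping: the connectivity constant $2(6k+2r+1)$ is exactly what Lemma~\ref{lemma:odd-strong-factorization} demands, and the divisibility hypothesis on $|E(G)|$ propagates through Lemma~\ref{lem:tight-good-initial-decomposition} (ensuring that each $v\in A$ has $d_{G_1}(v)$ divisible by $2k+1$ and each $v\in B$ has $d_{G_2}(v)$ divisible by $2k+1$) into the fractional factorization step. Hence the theorem follows directly from the two-step composition, and the proof in the paper is expected to consist of just a sentence applying Lemma~\ref{lemma:odd-strong-factorization} followed by Theorem~\ref{thm:odd-path-decomposition}.
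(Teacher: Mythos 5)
Your proposal is correct and matches the paper's proof exactly: the paper also simply invokes Lemma~\ref{lemma:odd-strong-factorization} to obtain a strong $(1,2k+1)$-bifactorization $\mathbb{F}$ and then applies Theorem~\ref{thm:odd-path-decomposition} to get an $\mathbb{F}$-balanced $(2k+1)$-path decomposition. Your surrounding discussion of what the two cited results rely on is accurate but not part of the proof itself.
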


\begin{comment}
\begin{proof}
	Let \(k\) be a positive integer and let \(G\) be a bipartite graph such 
that \(|E(G)|\) is multiple of \(2k+1\). Let \(r = (2k+1)(2k+2)\).
	Suppose \(G\) is a \(2(6k + 2r+1)\)-edge-connected graph.
	By Lemma~\ref{lemma:odd-strong-factorization} applied with \(\ell = 1\),
	\(G\) is strongly \((1,2k+1)\)-bifactorable.
	Let \(\mathbb{F}\) be a strong \((1,2k+1)\)-bifactorization for \(G\).
	By Theorem~\ref{thm:odd-path-decomposition}, 
	\(G\) admits an \(\mathbb{F}\)-balanced $(2k+1)$-path decomposition.
\end{proof}
\end{comment}

\begin{proof}
	Let \(k\) be a positive integer and let \(G\) be a bipartite graph such 
that \(|E(G)|\) is divisible by \(2k+1\). Let \(r = (2k+1)(2k+2)\).
	Suppose \(G\) is a \(2(6k + 2r+1)\)-edge-connected graph.
	By Lemma~\ref{lemma:odd-strong-factorization},
	\(G\) is strongly \((1,2k+1)\)-bifactorable.
	Let \(\mathbb{F}\) be a strong \((1,2k+1)\)-bifactorization of \(G\).
	By Theorem~\ref{thm:odd-path-decomposition}, 
	\(G\) admits an \(\mathbb{F}\)-balanced $(2k+1)$-path decomposition.
\end{proof}

\begin{comment}
\begin{corollary}
	Let \(k\) be a positive integer and let \(r = (2k+1)(2k+2)\). If 
\(G\) is a \(8(6k+2r+1+2(2k+1)^{12k+7})\)-edge-connected graph
	such that \(|E(G)|\) is multiple of \(2k+1\),
	then \(G\) admits a decomposition into \((2k+1)\)-paths.
\end{corollary}

\begin{proof}
	The result follows directly from Theorem~\ref{theorem:conj-equivalence}.
\end{proof}
\end{comment}

\subsection{Paths of even length}

The proof of this case is slightly different from the odd case.
First, we prove that Conjecture~\ref{conj:dec-bip} is equivalent to the 
following conjecture.
\begin{conjecture}\label{conj:dec-bip2}
For each tree $T$, there exists a positive integer
$k''_T$ such that, if $G$ is a $k''_T$-edge-connected bipartite graph
and $2|E(T)|$ divides $|E(G)|$, then \(G\) admits a \hbox{\(T\)-decomposition.}
\end{conjecture}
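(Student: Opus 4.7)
The plan is to verify Conjecture~\ref{conj:dec-bip2} in the case $T = P_\ell$, which is the setting the present machinery is designed to handle; the general-tree case lies beyond the reach of the techniques of this paper. For $\ell$ odd the conjecture is immediate: the hypothesis $2\ell \mid |E(G)|$ implies $\ell \mid |E(G)|$, and Theorem~\ref{thm:high-odd} already produces a $P_\ell$-decomposition provided $G$ has sufficiently large edge-connectivity, so one may take $k''_{P_\ell} \leq k_{P_\ell}'$. The substance of the proposal therefore lies in the even-length case, which is precisely the case for which the doubled divisibility hypothesis of Conjecture~\ref{conj:dec-bip2} was introduced.

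Write $\ell = 2k+2$. The central technical point is a parameter mismatch between the two main tools of Sections~\ref{section:factorizations} and~\ref{section:even-paths}: Lemma~\ref{lemma:even-strong-factorization} produces strong $(2,2k+2)$-bifactorizations under $(2k+2)$-divisibility of $|E(G)|$, whereas Theorem~\ref{thm:even-path-decomposition} consumes strong $(2,2(2k+2))$-bifactorizations in order to output $(2k+2)$-path decompositions. The key observation is that the doubled divisibility $2\ell = 4k+4 \mid |E(G)|$ is exactly what is needed to re-index the lemma: setting $k' = 2k+1$, one has $2k'+2 = 2\ell$, so Lemma~\ref{lemma:even-strong-factorization} applied with parameter $k'$ produces a strong $(2,2\ell)$-bifactorization $\mathbb{F}$ of $G$ as soon as $G$ is $2(6k'+2r'+4)$-edge-connected, where $r' = \max\{32k',(k'+1)(k'+3)\}$. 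Feeding $\mathbb{F}$ into Theorem~\ref{thm:even-path-decomposition} with parameter $k$ then yields an $\mathbb{F}$-balanced $(2k+2)$-path tracking decomposition of $G$, which is precisely the desired $P_\ell$-decomposition. Taking $k''_{P_\ell}$ to be the above edge-connectivity bound completes the proof of the conjecture for all even $\ell$.

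The anticipated main obstacle is not in the path case — for which the argument above is essentially a parameter shift that unlocks the existing machinery — but in extending Conjecture~\ref{conj:dec-bip2} to an arbitrary tree $T$. The disentangling framework of Section~\ref{section:disentangling}, and in particular the vanilla $\ell$-trail surgery underlying Lemma~\ref{lemma:disentangling}, is tailored to the structural features of paths: a path has exactly two end-vertices (both of which are leaves), and untangling a vanilla trail proceeds by swapping a hanging edge at a problematic end-vertex, with each swap reducing the functional $\tau$. To extend Conjecture~\ref{conj:dec-bip2} to a general tree $T$, one would need a notion of ``vanilla $T$-subgraph,'' an analogue of hanging edges anchored at each of the (possibly many) leaves of $T$, a corresponding augmenting-sequence calculus that keeps track of the more intricate combinatorics of multiple leaves, and a bifactorization of $G$ whose local degree profile encodes the leaf-structure of $T$ at each vertex. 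Developing such a framework — which the present paper explicitly does not attempt — is, in my view, the chief technical barrier between the path case and the full statement of Conjecture~\ref{conj:dec-bip2}.
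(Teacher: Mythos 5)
Your proposal is correct and reproduces the paper's own argument. The paper does not prove Conjecture~\ref{conj:dec-bip2} for arbitrary trees either; it verifies it for paths, exactly as you do. For even $\ell=2k+2$, your re-indexing $k'=2k+1$ in Lemma~\ref{lemma:even-strong-factorization} (so that $2k'+2=4k+4=2(2k+2)$) followed by Theorem~\ref{thm:even-path-decomposition} with parameter $k$ is precisely the proof of Theorem~\ref{thm:high-even}, and the resulting edge-connectivity bound $2(12k+2r+10)$ with $r=\max\{32(2k+1),(2k+2)(2k+4)\}$ matches the paper's constant. Your treatment of odd $\ell$ via $2\ell\mid|E(G)|\Rightarrow\ell\mid|E(G)|$ and Theorem~\ref{thm:high-odd} is also correct. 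The only component of the paper's Section~\ref{section:high} you do not mention is Theorem~\ref{theorem:conj-equivalence2}, which the paper uses \emph{in the converse direction} (to remove the doubled divisibility and deduce a bound under $\ell\mid|E(G)|$ alone); that result is not needed to establish the conjecture for paths as stated.
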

We prove Conjecture~\ref{conj:dec-bip2} restricted to the case 
\(T\)  is a path of even length.
The following result states the equivalence of 
Conjecture~\ref{conj:dec-bip2} 
and Conjecture~\ref{conj:dec-bip}.
%Note that this implies that Conjecture~\ref{conj:dec-bip2} is also equivalent to 
%Conjecture~\ref{conj:dec}.

\begin{theorem}\label{theorem:conj-equivalence2}
Let $T$ be a tree with \(\ell\) edges, \(\ell \geq 3\). The following two statements are
equivalent.
\begin{itemize}
\item[(i)] There exists a positive integer $k''_T$ such that, if $G$ is a 
$k''_T$-edge-connected 
		bipartite graph and $2|E(T)|$ divides $|E(G)|$, then $G$ admits 
a $T$-decomposition.
\item[(ii)] There exists a positive integer $k'_T$ such that, if $G$ is a 
$k'_T$-edge-connected bipartite graph and $|E(T)|$ divides $|E(G)|$, then $G$ admits 
a 
$T$-decomposition.
\end{itemize}
	Furthermore, 
	\(
		k'_T\leq 2(k''_T + \ell).
	\)
\end{theorem}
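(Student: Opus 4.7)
The plan is to prove the two directions separately, noting that the direction (ii)$\Rightarrow$(i) is immediate: any $k'_T$-edge-connected bipartite graph $G$ with $2|E(T)|$ dividing $|E(G)|$ automatically satisfies $|E(T)|\mid |E(G)|$, so setting $k''_T=k'_T$ works. The real content is in the reverse direction.

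For (i)$\Rightarrow$(ii), I will set $k'_T=2(k''_T+\ell)$, where $k''_T$ is the constant guaranteed by~(i), and let $G$ be a $k'_T$-edge-connected bipartite graph with $|E(G)|$ divisible by $\ell=|E(T)|$. The approach is to split into two cases according to the residue of $|E(G)|/\ell$ modulo $2$. When $2\ell$ divides $|E(G)|$, the conclusion follows immediately from (i), since $G$ is in particular $k''_T$-edge-connected. When $|E(G)|\equiv \ell \pmod{2\ell}$, the strategy is to carve off a single copy $T_0$ of $T$ from $G$ and apply (i) to the remaining graph $G-E(T_0)$, whose number of edges is then divisible by $2\ell$.

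Two facts must be checked for this reduction. First, a single copy of $T$ must exist in $G$: since $G$ has minimum degree at least $2(k''_T+\ell)\geq \ell$, a standard greedy embedding of $T$ (process the vertices in BFS order from an arbitrary root and extend at each step to an unused neighbor of the already-placed parent) succeeds, because at any intermediate stage fewer than $\ell$ vertices of~$T$ have been placed besides the parent, leaving at least one available neighbor of the parent's image. Second, removing $E(T_0)$ preserves enough edge-connectivity: any $\ell$-edge removal from a $2(k''_T+\ell)$-edge-connected graph yields a $(2k''_T+\ell)$-edge-connected graph, which is in particular $k''_T$-edge-connected, so (i) applies to $G-E(T_0)$ and produces a $T$-decomposition $\mathcal{D}$. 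Then $\mathcal{D}\cup\{T_0\}$ is the desired $T$-decomposition of $G$.

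I do not foresee any real obstacle in this proof; the only mildly delicate point is the greedy embedding lemma, and the bound $k'_T\leq 2(k''_T+\ell)$ is quite loose (one could even get $k'_T\leq k''_T+\ell$), which gives plenty of slack for both the edge-connectivity drop caused by removing $E(T_0)$ and the degree condition needed to embed~$T$.
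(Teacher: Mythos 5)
Your proof is correct and follows the same global strategy as the paper: handle the $2\ell \mid |E(G)|$ case by (i) directly, and otherwise remove one copy of $T$ and apply (i) to the remainder. The only difference is in how the edge-connectivity of $G-E(T_0)$ is certified. The paper invokes the Nash--Williams/Tutte theorem: $G$ being $2(k''_T+\ell)$-edge-connected contains $k''_T+\ell$ edge-disjoint spanning trees, and since $T_0$ has $\ell$ edges it meets at most $\ell$ of them, leaving $k''_T$ intact in $G-E(T_0)$, which forces $k''_T$-edge-connectivity. You instead use the elementary fact that deleting $s$ edges from an $m$-edge-connected graph leaves an $(m-s)$-edge-connected graph, obtaining the stronger conclusion that $G-E(T_0)$ is $(2k''_T+\ell)$-edge-connected. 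Your route is simpler, avoids the spanning-tree packing theorem, and (as you observe) would in fact support the sharper bound $k'_T \le k''_T + \ell$. You are also more explicit than the paper about why a copy of $T$ exists: the paper dismisses this with ``clearly,'' while your greedy BFS embedding argument is a correct justification, since $\delta(G) \ge 2(k''_T+\ell) > \ell$. Both proofs are sound; yours is marginally more self-contained at the cost of not reusing a theorem already present in the paper's toolkit.
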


\begin{proof}
	It suffices to prove that statement (i) implies statement (ii).
	Suppose statament (i) is true.
	Let \(G\) be an \(2(k''_T + \ell)\)-edge-connected graph such 
	that \(|E(G)|\) is divisible by \(\ell\).
	If \(|E(G)|\) is divisible by \(2\ell\),
	then \(G\) satisfies the conditions of statement (i),
	and therefore admits a \(T\)-decomposition.
	Thus, we may assume that \(|E(G)| = 2r\ell + \ell\) for some positive 
integer \(r\).
	 %\(k''_T > 0\), we have that 
%        Since the minimum degree of \(G\) is at 
%least \(2 (k''_T + \ell)\), 
	Clearly, \(G\) contains a copy \(T'\) of \(T\).
	By Theorem~\ref{thm:edge-disjoint-spanning-trees}, \(G\) contains at 
least
	\(k''_T + \ell\) edge-disjoint spanning trees.
	Since \(T'\) has \(\ell\) edges, \(T'\) intercepts at most \(\ell\) of 
	these spanning trees,
	and the graph \(G' = G - T'\) contains at least \(k''_T\) of 
these spanning trees.
	Thus, \(G'\) is \(k''_T\)-edge-connected.
	Note that \(|E(G')| = |E(G)| - \ell= 2r\ell\).
	By statement (i), \(G'\) admits a decomposition \(\D'\) into copies of 
\(T\).
	Thus, \(\D = \D' + T'\) is a decomposition of \(G\) into copies of 
\(T\).
\end{proof}

\begin{theorem}\label{thm:high-even}
	Let \(k\) be a positive integer and let \(r = 
\max\{32(2k+1),(2k+2)(2k+4)\}\). If \(G\) is a bipartite 
\(2(12k + 2r + 10)\)-edge-connected graph such that \(|E(G)|\) is divisible by 
\(4k+4\),
	then \(G\) admits a decomposition into \((2k+2)\)-paths.
\end{theorem}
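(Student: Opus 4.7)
The plan is to mirror the odd-length argument (Theorem~\ref{thm:high-odd}), combining the bifactorization result (Lemma~\ref{lemma:even-strong-factorization}) with the bifactorization-to-path-decomposition result (Theorem~\ref{thm:even-path-decomposition}). Since Theorem~\ref{thm:even-path-decomposition} consumes a strong \(\big(2,2(2k+2)\big)\)-bifactorization to produce a \((2k+2)\)-path decomposition, the task reduces to exhibiting such a bifactorization of \(G\) under the given hypotheses.

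To do this, I would invoke Lemma~\ref{lemma:even-strong-factorization} with its internal parameter taken to be \(k' = 2k+1\). With this substitution, the lemma's output parameter \(2k'+2\) equals \(4k+4 = 2(2k+2)\), which is precisely the modulus required by Theorem~\ref{thm:even-path-decomposition}. I would then verify that the hypotheses of the lemma are met: the divisibility condition demands \(|E(G)|\) to be divisible by \(2k'+2 = 4k+4\), which is exactly our hypothesis; and the edge-connectivity condition demands \(G\) to be \(2(6k' + 2r' + 4)\)-edge-connected with \(r' = \max\{32k', (k'+1)(k'+3)\}\). Substituting \(k' = 2k+1\) turns \(r'\) into \(r = \max\{32(2k+1),(2k+2)(2k+4)\}\) and the connectivity expression into \(2(12k + 2r + 10)\), matching the hypothesis of the theorem exactly.

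Once Lemma~\ref{lemma:even-strong-factorization} produces a strong \(\big(2,2(2k+2)\big)\)-bifactorization \(\mathbb{F}\) of \(G\), a single appeal to Theorem~\ref{thm:even-path-decomposition} yields an \(\mathbb{F}\)-balanced \((2k+2)\)-path tracking decomposition of \(G\). Discarding the tracking information gives the desired decomposition into paths of length \(2k+2\).

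There is no real obstacle in this step: all the difficulty has already been absorbed into Lemma~\ref{lemma:even-strong-factorization} and Theorem~\ref{thm:even-path-decomposition}. The only delicate point is the bookkeeping of parameters, and in particular the fact that the divisibility hypothesis here is \(4k+4\) rather than \(2k+2\). This is why the statement corresponds to Conjecture~\ref{conj:dec-bip2} rather than directly to Conjecture~\ref{conj:dec-bip}; the passage to the full conjecture for paths of even length is then handled separately via Theorem~\ref{theorem:conj-equivalence2}, at the cost of a constant factor in the edge-connectivity bound.
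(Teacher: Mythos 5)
Your proposal is correct and follows the exact same two-step argument as the paper: apply Lemma~\ref{lemma:even-strong-factorization} with internal parameter $2k+1$ (so that the modulus becomes $2(2k+1)+2 = 4k+4 = 2(2k+2)$, the connectivity becomes $2(6(2k+1)+2r+4)=2(12k+2r+10)$, and $r$ matches), obtaining a strong $\big(2,2(2k+2)\big)$-bifactorization, and then invoke Theorem~\ref{thm:even-path-decomposition}. The parameter bookkeeping is carried out correctly and the observation that the residual passage from $4k+4$ to $2k+2$ divisibility is handled by Theorem~\ref{theorem:conj-equivalence2} also matches the paper.
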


\begin{comment}	
\begin{proof}
	Let \(k\) be a positive integer and let \(G\) be a bipartite graph 
	such that \(|E(G)|\) is multiple of \(4k+4=2((2k+1)+2)\). Let \(r = 
\max\{32(2k+1),(2k+2)(2k+4)\}\). Suppose that $G$ is a \(2(12k + 2r + 
10)\)-edge-connected 	graph, i.e, $G$ is $2(6(2k+1)+2r+4)$-edge-connected.
	
	By Lemma~\ref{lemma:even-strong-factorization} applied with \(\ell = 
2\) and \(2k+1\), \(G\) is strongly \((2,2(2k+2))\)-bifactorable.
	Let \(\mathbb{F}\) be a strong \((2,2(2k+2))\)-bifactorization for 
\(G\).
	By Theorem~\ref{thm:even-path-decomposition}, \(G\) 
admits an \(\mathbb{F}\)-balanced $(2k+2)$-path decomposition.
\end{proof}
\end{comment}

\begin{proof}
	Let \(k\) be a positive integer and let \(G\) be a bipartite graph 
	such that \(|E(G)|\) is divisible by \(4k+4=2((2k+1)+2)\). Let \(r = 
\max\{32(2k+1),(2k+2)(2k+4)\}\). Suppose that $G$ is a \(2(12k + 2r + 
10)\)-edge-connected 	graph, i.e, $G$ is $2(6(2k+1)+2r+4)$-edge-connected.
	
	By Lemma~\ref{lemma:even-strong-factorization} (applied with \(2k+1\)), \(G\) is strongly \((2,2(2k+2))\)-bifactorable.
	Let \(\mathbb{F}\) be a strong \((2,2(2k+2))\)-bifactorization of 
\(G\).
	By Theorem~\ref{thm:even-path-decomposition}, \(G\) 
admits an \(\mathbb{F}\)-balanced $(2k+2)$-path decomposition.
\end{proof}

\begin{corollary}
Let \(k\) be a positive integer and let \(r = 
\max\{32(2k+1),(2k+2)(2k+4)\}\). If \(G\) is a bipartite 
\(2(26k + 4r + 22)\)-edge-connected graph such that \(|E(G)|\) is divisible by 
\(2k+2\),
	then \(G\) admits a decomposition into \((2k+2)\)-paths.
\end{corollary}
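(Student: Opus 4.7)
The plan is to obtain this corollary as a direct consequence of combining Theorem~\ref{thm:high-even} with Theorem~\ref{theorem:conj-equivalence2}. Theorem~\ref{thm:high-even} already settles the case in which \(|E(G)|\) is divisible by \(2(2k+2) = 4k+4\); the role of Theorem~\ref{theorem:conj-equivalence2} is precisely to bridge the gap between divisibility by \(|E(T)|\) and divisibility by \(2|E(T)|\), so applying it to \(T = P_{2k+2}\) is exactly what is needed.

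More concretely, first I would set \(T = P_{2k+2}\), so that \(\ell = |E(T)| = 2k+2\). Theorem~\ref{thm:high-even} asserts that statement (i) of Theorem~\ref{theorem:conj-equivalence2} holds for \(T\) with constant
\[
k''_T \;\leq\; 2(12k + 2r + 10) \;=\; 24k + 4r + 20,
\]
provided \(r = \max\{32(2k+1), (2k+2)(2k+4)\}\). Then Theorem~\ref{theorem:conj-equivalence2} yields statement (ii) for \(T\) with
\[
k'_T \;\leq\; 2(k''_T + \ell) \;\leq\; 2\bigl((24k + 4r + 20) + (2k+2)\bigr) \;=\; 2(26k + 4r + 22),
\]
which is exactly the edge-connectivity hypothesis in the corollary. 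Since \(\ell = 2k+2 \geq 3\) for every positive integer \(k\), the hypothesis of Theorem~\ref{theorem:conj-equivalence2} is met, and any \(2(26k+4r+22)\)-edge-connected bipartite graph \(G\) with \(|E(G)|\) divisible by \(2k+2\) admits a \(P_{2k+2}\)-decomposition.

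There is essentially no obstacle in this argument beyond bookkeeping: the decomposition constant is obtained by a straightforward substitution into the bound of Theorem~\ref{theorem:conj-equivalence2}, and the arithmetic \(2((24k + 4r + 20) + (2k+2)) = 2(26k + 4r + 22)\) matches the stated constant exactly. The only point to double-check is that one is entitled to invoke Theorem~\ref{theorem:conj-equivalence2} in this direction (namely, (i) \(\Rightarrow\) (ii)), which is the direction actually proved there via an edge-disjoint-spanning-trees argument removing a single copy of \(T\) to reduce to the \(2|E(T)|\)-divisible case. Thus the corollary requires no new ideas and amounts to a one-line deduction.
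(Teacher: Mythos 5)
Your proof is correct and follows exactly the same route as the paper's (one-line) proof: apply Theorem~\ref{theorem:conj-equivalence2} to $T=P_{2k+2}$ with $k''_T = 2(12k+2r+10)$ from Theorem~\ref{thm:high-even}, yielding $k'_T \leq 2(k''_T+\ell) = 2(26k+4r+22)$. You have simply made explicit the bookkeeping the authors leave implicit, and the arithmetic checks out.
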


\begin{proof}
The result follows directly from Theorem~\ref{theorem:conj-equivalence2}.
\end{proof}

\begin{comment}
\begin{corollary}
Let \(k\) be a positive integer and let \(r = 
\max\{32(2k+1),(2k+2)(2k+4)\}\). If \(G\) be a  
\(8(26k + 4r + 21 + 2(2k+2)^{12k+13})\)-edge-connected graph such that 
\(|E(G)|\) is multiple of 
\(2k+2\),
	then \(G\) admits a decomposition into \((2k+2)\)-paths.
\end{corollary}

\begin{proof}
	The result follows directly from 
Theorem~\ref{theorem:conj-equivalence}.
\end{proof}
\end{comment}

\begin{comment}
As noted in Section~\ref{section:intro}, the case $k=0$ was proved by
Kotzig in~\cite{Kotzig57}
\end{comment}

\section{Concluding remarks}

This paper benefited greatly from Thomassen's results on decomposition of highly edge-connected graphs.
We hope that the connection of this work to these results of Thomassen  
is clear to a reader familiarized with them.
We also would like to mention that Merker's result~\cite{Me15+} 
contributes to the literature with an alternative to the factorizations results presented in this paper.
%
%If, in the induction step, we add edges to the previous trails
%without worrying about self-intersection, i.e., without using the Disentanglig Lemma, 
%we can build a decomposition into graphs that can be obtained from \(P_\ell\)
%by identification of vertices.
%The Disentangling Lemma overcomes these identifications. 
%Merker~\cite{Me15+} extends this idea by proving that a highly edge-connected graph can be decomposed into graphs that can be obtained from a fixed tree by identification of vertices.
If one can generalize the Disentangling Lemma to deal with general trees, 
 Merker's result can be applied to solve Conjecture~\ref{conj:dec-bip}.

While writing this paper we learned that 
Bensmail, Harutyunyan, Le, and Thomass{\'e}~\cite{BensmailHLT15+} obtained a result similar to the one presented 
here using a different approach.

The Disentangling Lemma has shown to be a powerful technique to deal with path decompositions.
We were able to use a version of it to decompose regular graphs with prescribed girths into paths of fixed length~\cite{BoMoOsWa15+reg}.
%In a forthcoming paper~\cite{BoMoOsWa15+reg} we use a version of the Disentangling Lemma 
%for graphs with prescribed girth to obtain decompositions of regular graphs into paths of fixed length.
\bibliographystyle{amsplain}
\bibliography{bibliografia-decomposition}
\end{document}